\theoremstyle{plain}
\newtheorem{theorem}{Theorem}
\newtheorem{lemma}[theorem]{Lemma}
\newtheorem{corollary}[theorem]{Corollary}
\newtheorem{proposition}[theorem]{Proposition}
\newtheorem{conjecture}[theorem]{Conjecture}
\newtheorem*{claim*}{Claim}
\theoremstyle{remark}
\newtheorem{remark}[theorem]{Remark}
\newcommand{\R}{\mathbb{R}}
\newcommand{\Z}{\mathbb{Z}}
\newcommand{\E}{\mathbb{E}}
\newcommand{\var}{\mathbb{V}\mathrm{ar}}
\newcommand{\Var}{\var}
\newcommand{\Cov}{\mathbb{C}ov}
\newcommand{\cov}{\Cov}
\newcommand{\Prob}{\mathbb{P}}
\newcommand{\ind}[1]{\mathbf{1}_{#1}}
\newcommand{\Geom}{\operatorname{Geometric}}
\newcommand{\meet}{\operatorname{meet}}
\newcommand{\abso}{\operatorname{abs}}
\newcommand{\hit}{\operatorname{hit}}
\newcommand{\NVM}{\operatorname{NVM}}
\newcommand{\RW}{\operatorname{RW}}
\newcommand{\indic}[1]{\mathbf{1}_{\left\{#1\right\}}}
\newcommand{\sd}{\Gamma}
\begin{document}

\begin{frontmatter}
\title{Asymptotic behaviour of the noisy voter model density process}
\runtitle{Asymptotic behaviour of the noisy voter model density process}

\begin{aug}
\author[A]{\fnms{Richard} \snm{Pymar}\ead[label=e1]{r.pymar@bbk.ac.uk}}
\and
\author[B]{\fnms{Nicol\'as} \snm{Rivera}\ead[label=e2]{n.a.rivera.aburto@gmail.com}}
\address[A]{School of Computing and Mathematical Sciences, Birkbeck, University of London, London, WC1E
7HX, UK, \printead{e1}}

\address[B]{Instituto de Estad\'istica, Facultad de Ciencias, Universidad de Valpara\'iso, Valpara\'iso, 2360102, Chile, \printead{e2}}
\end{aug}

\begin{abstract}
Given a transition matrix $P$ indexed by a finite set  $V$ of vertices, the voter model is a discrete-time Markov chain in $\{0,1\}^V$ where at each time-step a randomly chosen vertex $x$ imitates the opinion of vertex $y$ with probability $P(x,y)$. The noisy voter model is a variation of the voter model in which vertices may  change their opinions by the action of an external noise. The strength of this noise is measured by an extra parameter $p \in [0,1]$. 

In this work we analyse the density process, defined as the stationary mass of vertices with opinion 1, i.e.\! $S_t = \sum_{x\in V} \pi(x)\xi_t(x)$, where $\pi$ is the stationary distribution of $P$, and $\xi_t(x)$ is the opinion of vertex $x$ at time $t$.
We investigate the asymptotic behaviour of $S_t$ when $t$ tends to infinity for different values of the noise parameter $p$. In particular, by allowing $P$ and $p$ to be functions of the size $|V|$, we show that, under appropriate conditions and small enough $p$ a normalised version of $S_t$ converges to a Gaussian random variable, while for large enough $p$, $S_t$ converges to a Bernoulli random variable.  We provide further analysis of the noisy voter model on a variety of specific graphs including the complete graph, cycle, torus and hypercube, where we identify the critical rate $p$ (depending on the size $|V|$) that separates these two asymptotic behaviours.
\end{abstract}

\begin{keyword}[class=MSC]
\kwd[Primary ]{60K35}
\kwd{60J10}
\end{keyword}

\begin{keyword}
\kwd{voter model}
\kwd{interacting particle systems}
\end{keyword}

\end{frontmatter}

\section{Introduction}
\subsection{Background}
\color{black}
The voter model is a classical interacting particle system, first described independently in \cite{clifford} and \cite{holley}. It has found applications across the sciences including statistical physics \cite{spin}, social sciences \cite{social}, chemistry \cite{catalysts}, ecology \cite{eco}, and sociophysics \cite{socio}. The process can be described simply: a set of vertices (or sites) $V$ are initially each endowed with an opinion (0 or 1) and vertices update their opinions over time. At the update time of a vertex $x$,  an imitation step occurs: $x$ changes its opinion to that of a randomly chosen vertex (according to some transition matrix $P(x,\cdot)$, usually the transition matrix of the simple random walk on a graph $G$). The voter model can evolve in discrete or continuous time: in discrete time at each time step one vertex is chosen (uniformly) to update its opinion, whereas in the continuous time setting each vertex updates at the ring times of an independent exponential clock. After its conception, the voter model attracted the attention of probabilists (and later theoretical computer scientists) and several results have been proved in both the finite-vertex set and infinite-vertex set regimes. In this paper we only consider the finite-vertex setting, where the focus has predominantly been on estimating the consensus time defined as the time at which all vertices have the same opinion \cite{berenbrink2016bounds, cooper2013coalescing, cooper2010multiple, hassin2001distributed, kanade2019coalescence,oliveira2019random}. 

Variations of the voter model are ubiquitous, and have been studied to investigate robustness to perturbations in the dynamics, or to allow for increased applicability. These variations include models with multiple opinions \cite{starnini2012ordering}, the existence of one or more zealots/stubborn vertices \cite{huo2019zealot}, constrained voter models \cite{lanchier}, the antivoter model \cite{Rinott1997}, and the process on dynamic graphs \cite{dynamic} or random graphs \cite{random}. 

In this work we focus on analysing the noisy voter model introduced in \cite{granovsky1995noisy} (and independently in \cite{kirman}), which is a modification of the voter model in which the opinions are affected by an external noise. The dynamics are very similar to the voter model: upon selecting a vertex $x$ for updating, with some probability $p$, $x$ independently re-randomizes its opinion (choosing either opinion 0 or 1 with equal probability), otherwise (with probability $1-p$) $x$ performs an imitation step. Note that by choosing $p=0$ we recover the standard voter model, whereas with $p=1$ only re-randomization occurs.  
The noise allows for more interesting large-time behaviour in finite systems, as configurations of complete consensus are no longer absorbing. The external noise causes spontaneous opinion changes and provides an opposing force to the standard voter dynamics which seeks consensus.

To the best of our knowledge, for  finite vertex set $V$, the probability literature has so far only considered mixing times. Suppose $|V|=n$ and let $P$ be a transition matrix. If $p=1$ (only re-randomization occurs) the process is equivalent to a lazy Markov chain on a hypercube on $2^n$ vertices, and so the mixing time is $\Theta(n\log n)$. When the noise parameter $p\in(0,1)$ is constant and $P$ is the transition matrix of a random walk on a graph, \cite{ramadas2014mixing} shows that in the continuous time setting (where imitation steps occur at rate 1 and re-randomization at constant rate) the mixing time of the  noisy voter model is $O(\log n)$.
This translates to a $O(n \log n)$ mixing time for the discrete case. Recently  Cox, Peres and Steif \cite{cox2016cutoff} proved that, under appropriate conditions, the mixing time of the noisy voter model (in discrete time\footnote{their main results are presented in continuous time, but for comparison purposes we present their result in discrete time}) is $n\log n/(2p)$, and that it exhibits total variation cut-off. The noisy voter model has been studied more extensively in the physics literature. Of particular relevance here is the observation  -- obtained through numerical analyses -- of the possible existence of a critical rate for the noise parameter such that if $p=p(n)$ tends to 0 slower than the critical rate then the noise term dominates the dynamics, whereas if $p$ tends to 0 faster than this rate the dynamics is closer to the voter model \cite{carro2016noisy, peralta2018analytical}. We provide theoretical results demonstrating this phenomenon and conjecture its precise nature for general settings (Conjecture~\ref{conjec}).

Our focus is on a particular function of the noisy voter model  known as the \emph{density process} in the probability literature \cite{chen,cox} and the \emph{link magnetisation} \cite{peralta2018stochastic} or the \emph{degree-weighted magnetisation} \cite{Suchecki_2005} in the physics literature.  Consider a transition matrix $P$ on $n$ vertices, let $p \in [0,1]$ and $\xi_t(v)$ denote the opinion of vertex $v\in V$ at time $t$ in the noisy voter model (formal definitions appear in the next section). The time $t$ state of the density process is defined as $S_t := \sum_{x\in V}\pi(x) \xi_t(x)$ where $\pi$ is the stationary distribution of $P$ (assuming it exists). Thus the density process models the evolution of the weight (according to $\pi$) of opinion $1$. Our motivation for choosing the pre-factor $\pi(x)$ is that, for the non-noisy voter model, the quantity $S_t$ is then conserved under the dynamics, that is, $\mathbb{E}[S_{t+1}\mid S_t]=S_t$ almost surely. The quantity $\frac1{n}\sum_x \xi_t(x)$ would be natural in the context of edge-updates (i.e.\! instead of selecting a vertex for opinion-updating, we select an edge) as it is conserved under this dynamic. Further, if we assume that $P$ is reversible and that each vertex has a spin value $+1$ or $-1$ (instead of an opinion), then the magnetisation of an edge $e$ is the average spin of both endpoints, and the \emph{link magnetisation} is the expected magnetisation of a random edge $(x,y)$ chosen with probability $\pi(x)P(x,y)$. When $P$ is the (lazy) transition matrix of a random walk on a graph, the link magnetisation is just the expected average spin of a randomly chosen edge.

The analysis of the evolution of $S_t$ over time, and especially its large-time behaviour, provides insight into the dependency of the noisy voter model on the noise parameter $p$, allowing us to identify different regimes as $p$ varies in $[0,1]$. In particular, we show a transition from an ordered regime for $p$ small to a disordered regime for $p$ large, which gives support to many of the results empirically found in \cite{peralta2018stochastic}, especially when $P$ is the transition matrix of a simple random walk on a large graph.

Our results are expressed in terms of $S = \lim_{t\to \infty} S_t$ (which exists in distribution). We prove that for $p$ sufficiently  small (depending on the transition matrix $P$), $S$ converges (as $|V|$ tends to infinity) to a Bernoulli distribution (Proposition~\ref{prop:small_p}), indicating that in equilibrium almost all vertices have the same opinion as in the usual voter model. Conversely we show that for $p$ sufficiently large (converging to a non-zero constant or to zero sufficiently slowly) $S$ (normalised appropriately) converges to a Gaussian random variable (Theorem~\ref{thm:normalApprox}, Corollaries~\ref{cor:normalApprox}, \ref{thm:normalApproxGraphs}, and~\ref{cor:gaussianreg}) and so in particular there are positive proportions (according to the stationary distribution $\pi$ of $P$) of each opinion present at equilibrium. As far as we are aware, our work is the first to study the asymptotic behaviour of the noisy voter model in which the noise parameter $p$ can be unbounded from below when the size of the graph tends to infinity. 

Throughout, $P$ is chosen as the transition matrix of the simple random walk on a graph $G$, and our results are expressed in terms of the graph structure. For several graph families we obtain the precise critical rate $p_c=p_c(|V|)$, in the sense that if $p\ll p_c$ then $S$ converges to Bernoulli, and $p\gg p_c$ implies that (normalised) $S$ converges to Gaussian (Proposition~\ref{prop:regularhitn}). In all cases we find that the critical probability $p_c$ is proportional to the inverse of the meeting time of two independent walks (started from stationarity).
Obtaining the critical rate for the cycle and the 2D torus is particularly challenging and requires special treatment and careful analysis (Proposition~\ref{P:grid}). The cycle is particularly interesting as on the cycle the noisy voter model is identical to the stochastic Ising model, see for example \cite{cox2016cutoff} (the relationship between the inverse temperature $\beta$ and the noise parameter $p$ is $4\beta=\log(2/p-1)$.)


There are two key ideas behind the proof of Theorem~\ref{thm:normalApprox} (which is used for all results presented here pertaining to Gaussian convergence): duality and Stein's method. Duality for the voter model has been known since the original works on the model \cite{clifford,cox,holley,liggett} and allows for the application of the rich theory of random walks. Stein's method (introduced in \cite{stein}) is a powerful technique for proving approximation results (see for example the survey \cite{steinsurvey}). An application of a particular version in~\cite{rollin2008note} is a key contribution to the proof of Theorem~\ref{thm:normalApprox}, providing general conditions for Gaussian convergence. 
This approximation has been used in \cite{marinov2013counting} to obtain conditions for Gaussian convergence in a voter-like model. Stein's method has also been used to obtain Gaussian convergence in the antivoter model for the proportion of vertices with opinion 1 at stationarity \cite{Rinott1997}. More generally, versions of Stein's method have been used with other particle system/statistical mechanics problems:  to various systems including the voter model \cite{goldstein2018stein}; to discrete Gibbs measures \cite{eichelsbacher2008stein};  to the magnetisation in the Curie–Weiss model (Ising model on the complete graph) \cite{bresler2019stein,chatterjee2010applications,chatterjee2011nonnormal,eichelsbacher2010stein}.

In the remainder of this section we present the main results.  Then, in Section~\ref{S:dual} we construct a dual process to the noisy voter model. While only a small modification is required to the standard dual (to the voter model), for completeness we include the argument. We then use this duality to prove some key properties of the noisy voter model in Section~\ref{sec:properties} including identities for the mean and variance of $S$ and the proof of all the results of Section~\ref{sec:Results}, except Theorem~\ref{thm:normalApprox} which features in Section~\ref{sec:normalApprox}, and Proposition~\ref{P:grid} (results for the cycle and torus). The second half of the paper, Section~\ref{sec:toruscycle}, is devoted to the cycle and torus which requires ad-hoc arguments utilising structural properties of these graphs. The Appendix collects useful results used throughout.






\subsection{Formal description of the model and object of interest}\label{sec:formal}
Here we present a formal definition of the noisy voter model and the statistic $S$.
Let $P$ be the transition matrix of an irreducible discrete-time random walk on a finite set of vertices $V$  and let $\pi$ denote the unique stationary distribution of $P$. The noisy voter model on $V$ associated with $P$ is a discrete-time Markov chain $(\xi_t)_{t \in\mathbb{N}}$ in the space $\{0,1\}^V$ in which all vertices have an initial opinion in $\{0,1\}$. At each time-step, with probability $p$ select a uniform vertex $x$ and re-randomise its opinion, i.e.\! choose an opinion from $\{0,1\}$ uniformly. Otherwise, (with probability $q = 1-p$) perform a standard (imitation) step of the voter model. 

The transition matrix of $(\xi_t)_{t \in\mathbb{N}}$, denoted $Q$, can be given explicitly. Suppose the configurations $\xi$ and $\xi'$ differ only in the value of $x \in V$, i.e. $\xi(y) \neq \xi'(y)\iff y=x$, then
\begin{align}\label{eqn:defQ}
Q(\xi, \xi') = \Prob(\xi_1 = \xi'| \xi_0 = \xi)  = \left\{
    \begin{array}{ll}
      \frac{p}{2n}+\frac{q}{n}\sum_{y \in V} P(x,y) \xi(y) & \text{if $\xi(x) = 0$,} \\
      \frac{p}{2n}+\frac{q}{n}\sum_{y \in V} P(x,y) (1-\xi(y))  & \text{if $\xi(x) = 1$,}\\
    \end{array}
  \right.
\end{align}
where $n=|V|$.
If $\xi$ and $\xi'$ differ at more than one vertex then $Q(\xi, \xi') = 0$. Finally, $Q(\xi, \xi) = 1- \sum_{\xi' \neq \xi} Q(\xi,\xi')$. We write $\NVM(V,P,p)$ for this process.

Observe that if $p>0$ then all configurations in $\{0,1\}^V$ are reachable due to re-randomization, i.e.\!  $Q$ is irreducible. Moreover, with probability at least $p/2$, no vertex changes its opinion at a given time step, so  $Q$ is aperiodic.  It follows that $\NVM(V,P,p)$ has a unique stationary distribution, which we denote by $\sd$.

Given a vector $\xi \in \{0,1\}^{V}$, we define the object of interest $S(\xi):= \sum_{x \in V} \pi(x)\xi(x)$ where $\pi$ is the stationary distribution of $P$ and $\xi$ is a random $\{0,1\}$-valued vector
 with distribution~$\sd$. Thus $S=S(\xi)$ is a random variable, and the value of $S$
is given by the $\pi$-measure of the set of vertices with opinion 1.

When $p=0$, the noisy voter model reverts to
the voter model with transition matrix $P$. In such a case, the voter model reaches an ordered state of consensus, that is, a fixed state where all opinions are the same. Thus the process $\NVM(V,P,p)$ does not have a unique stationary distribution; indeed, it has two absorbing states: the state where all opinions are 1, and another where all opinions are 0.

On the other hand, if $p=1$, then only re-randomization occurs, and thus the chain is a lazy random walk on the hypercube. It follows that  $\sd(\xi)=1/2^n$ for all $\xi \in \{0,1\}^V$, that is, the components of $\xi$ are i.i.d.\! Bernoulli random variables of parameter $1/2$. 
In this case we have $\E(S)=1/2$ and $\var(S)=\sum_{x\in V}\pi^2(x)/4$, and so $\frac{S-1/2}{\sqrt{\var(S)}}$ converges in distribution (when $n$ tends to infinity) to a standard Gaussian if $\max_{y\in V} \{\pi(y)^2/\sum_{x\in V}\pi(x)^2\}\to 0$ by Lindeberg's central limit theorem.

Our results, presented below, provide much finer detail on the transition from order to disorder.

\subsection{Results}\label{sec:Results}

We introduce some more notation before presenting the main results. Set $\pi^* := \max_{x\in V} \pi(x)$,  $\nu^2 := \sum_{x\in V} \pi(x)^2$, and for $x,y\in V$, $\mu(x,y) := \pi(x)^2P(x,y)/\nu^2$.  
We use the letter $\xi$ to denote a sample from $\sd$, the stationary distribution of  $\NVM(V,P,p)$. 
Finally, let $\sigma^2:= \var(S)$ denote the variance of $S$, and $W:= \frac{S-\E(S)}{\sigma}$ denote the standardisation of $S$. Sometimes, we will consider sequences $\{\NVM(V_n,P_n,p_n):\,n\ge1\}$ of the noisy voter model indexed by $|V_n|=n\geq 1$ and, in this case, we will include the subindex $n$ for all above definitions (e.g.\! $S_n, \sigma_n, W_n, \pi_n$). We will always assume for each $n\ge1$ that $P_n$ is irreducible, and that $p_n$ is non-increasing in $n$. 

We denote convergence in distribution by $\overset{\mathcal D}{\Rightarrow}$, e.g. $X_n \overset{\mathcal D}{\Rightarrow} X$, and all limits given are taken when $n$ tends to infinity (unless otherwise stated).

Our first result gives sufficient conditions for $W_n$ converging to a Gaussian random variable as $n$ tends to infinity. While the statement may appear a little cumbersome, the conditions required follow naturally from a version of Stein's method and the result in this form increases the ease of application. We present simplified versions of the result for particular situations in the ensuing corollaries.

\begin{theorem}[General conditions for Gaussian convergence]\label{thm:normalApprox}
Consider a noisy voter model $\NVM(V,P,p)$ with $|V|=n$, and let $\xi\sim \Gamma$. Let $\Phi$ denote the cumulative distribution function of a standard Gaussian random variable on $\R$. Then there exists a universal constant $C>0$ such that 
\begin{align}\label{eqn:hardcondition1}
\sup_{t\in \R}\Big|\Prob\Big(\frac{S-1/2}{\sigma}\leq t\Big)-\Phi(t)\Big|\leq C\Big\{\Big(\frac{\pi^*}{\sigma}\Big)^3 \frac{n}{p}+ \Big(\frac{\pi^*}{\sigma}\Big)^2\sqrt{\frac{n}{p}}+\frac{\nu^2}{p\sigma^2}\sqrt{\Var\Psi} \Big\},
\end{align}
where the random variable $\Psi$ is given by
$
\Psi:=\sum_{x\in V}\sum_{y\in V}\mu(x,y)\ind{\{\xi(x)\neq \xi(y)\}}.
$
 Moreover, we have that
\begin{align}
\Var(\Psi) \leq 16\sigma^2(\pi^*)^2\nu^{-4}.\label{eqn:boundVarDelta1}
\end{align}
In particular, if a sequence of noisy voter models $\NVM(V_n,P_n,p_n)$ (with $|V_n|=n$) satisfies
\begin{align}\label{eqn:hardcondition2}
\frac {n}{p_n}\left(\frac{\pi^*_n}{\sigma_n} \right)^3 + \frac{\pi^*_n}{\sigma_n p_n} \to  0
\end{align}
then $W_n  \overset{\mathcal D}{\Rightarrow} N(0,1)$.

\end{theorem}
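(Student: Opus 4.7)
The plan is to apply Stein's method of exchangeable pairs. I take $\xi \sim \Gamma$ and let $\xi'$ be obtained by running one step of the noisy voter chain from $\xi$; since $\Gamma$ is stationary, the pair $(\xi,\xi')$ is exchangeable. Because only the single uniformly chosen vertex $X$ can update in one step, the displacement $D := S(\xi')-S(\xi) = \pi(X)(\xi'(X)-\xi(X))$ satisfies $|D| \leq \pi^*$ almost surely. Averaging the update probabilities in (\ref{eqn:defQ}) over the uniform choice of $X$ and invoking the stationarity identity $\sum_x \pi(x) P(x,y) = \pi(y)$ yields
\begin{equation*}
\E[D \mid \xi] = -\frac{p}{n}\bigl(S - \tfrac12\bigr),
\end{equation*}
so with $W := S - 1/2$ the pair $(W,W')$ is a Stein exchangeable pair with linearity constant $\lambda := p/n$, and in particular $\E S = 1/2$. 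A parallel calculation, using $\Prob(\xi'(x) \neq \xi(x) \mid \xi, X=x) = p/2 + q\sum_y P(x,y)\ind\{\xi(y)\neq\xi(x)\}$, produces the second-moment identity
\begin{equation*}
\E[D^2 \mid \xi] = \frac{p\nu^2}{2n} + \frac{q\nu^2}{n}\Psi.
\end{equation*}

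Next I would invoke a Berry--Esseen-type bound for Stein exchangeable pairs in Kolmogorov distance, of the shape
\begin{equation*}
\sup_t\bigl|\Prob(W/\sigma \leq t) - \Phi(t)\bigr| \lesssim \frac{\E|D|^3}{\lambda\sigma^3} + \frac{\sqrt{\E D^4}}{\sqrt{\lambda}\,\sigma^2} + \frac{\sqrt{\Var\E[D^2\mid\xi]}}{\lambda\sigma^2}.
\end{equation*}
Plugging the crude pointwise estimates $\E|D|^3 \leq (\pi^*)^3$ and $\E D^4 \leq (\pi^*)^4$ into the first two summands recovers the first two terms of (\ref{eqn:hardcondition1}), while the identity above gives $\sqrt{\Var \E[D^2 \mid \xi]} = (q\nu^2/n)\sqrt{\Var \Psi}$, producing the third. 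I expect the main obstacle to be locating and correctly applying the precise Kolmogorov-distance version of this Stein bound: the fourth-moment term $\sqrt{\E D^4}/\sqrt{\lambda}$ is characteristic of Kolmogorov distance and is absent from the Wasserstein analogue, so different references phrase the bound with slightly different moments and slightly different constants.

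For (\ref{eqn:boundVarDelta1}) I would appeal to a Poincar\'e inequality. The spectral gap of $Q$ is $p/n$: the upper bound comes from the eigenfunction $W$ with eigenvalue $1-p/n$, and the matching lower bound follows from the degree-$k$ eigenfunctions of $Q$ having eigenvalues $\le 1 - kp/n$. Hence $\Var(\Psi) \leq (n/(2p))\, \E[(\Psi(\xi) - \Psi(\xi'))^2]$. When vertex $X$ flips, $\Psi$ changes by at most $a(X) + b(X)$, where $a(x) := \pi(x)^2/\nu^2$ and $b(x) := \sum_y \mu(y,x)$; reversibility of $P$ implies $b(x) \leq \pi^*\pi(x)/\nu^2$, giving the pointwise bound $(\Psi-\Psi')^2 \leq 4(\pi^*)^2 D^2/\nu^4$. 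Taking expectations and using the Stein-pair identity $\E D^2 = 2\lambda\sigma^2 = 2p\sigma^2/n$ then produces (\ref{eqn:boundVarDelta1}) up to the stated constant.

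Finally, for the Gaussian convergence corollary I would substitute (\ref{eqn:boundVarDelta1}) into (\ref{eqn:hardcondition1}): the third term becomes $O(\pi^*/(p\sigma))$, which matches the second summand of (\ref{eqn:hardcondition2}); the first term matches the first summand directly; and the middle term is absorbed via the AM--GM bound $(\pi^*/\sigma)^2\sqrt{n/p} \leq \tfrac12[(\pi^*/\sigma)^3 n/p + \pi^*/\sigma]$ together with $\pi^*/\sigma \leq \pi^*/(p\sigma) \to 0$ (using $p \leq 1$).
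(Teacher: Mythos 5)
Your Stein-method half is essentially the paper's argument: the conditional identities $\E[D\mid\xi]=-\tfrac{p}{n}(S-\tfrac12)$ and $\E[D^2\mid\xi]=\tfrac{p\nu^2}{2n}+\tfrac{q\nu^2}{n}\Psi$ are exactly what the paper derives (Lemma~\ref{lemma:lambda} and the display leading to $\var(\E((W'-W)^2\mid\xi))$), and they are fed into precisely R\"ollin's bound (Theorem~\ref{thm:rollin}). One small caveat: you invoke ``exchangeable pairs'' and assert $(\xi,\xi')$ is exchangeable; for a stationary chain this requires $Q$ to be reversible, which the NVM chain is generally \emph{not} (see below). This turns out to be harmless because R\"ollin's result, which the paper cites, deliberately removes the exchangeability hypothesis and only needs the linear regression identity $\E(W'\mid W)=(1-\lambda)W$. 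You should cite it in that form rather than as an exchangeable-pairs bound.

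The real gap is in the proof of \eqref{eqn:boundVarDelta1}. Your Poincar\'e-inequality route $\Var(\Psi)\le\frac{n}{2p}\E[(\Psi'-\Psi)^2]$ presupposes that $Q$ is reversible with spectral gap at least $p/n$, and both halves of this are problematic. First, $Q$ is generically non-reversible: Kolmogorov's cycle criterion fails already around the four-cycle $\xi\to\xi^a\to\xi^{ab}\to\xi^b\to\xi$ for two adjacent vertices $a,b$. With $r=p/2$, $A=q\sum_yP(a,y)\xi(y)$, $B=q\sum_yP(b,y)\xi(y)$, $u=qP(a,b)$, $v=qP(b,a)$ and $s=1-r$, the forward product is $(r+A)(r+B+v)(s-A-u)(s-B)$ while the reverse is $(r+B)(r+A+u)(s-B-v)(s-A)$; taking e.g.\ a $4$-regular graph, $p=0.6$, a configuration with $\alpha_a=1/2$, $\alpha_b=0$ gives $0.056\neq0.054$, so detailed balance fails. (The cycle $C_n$ is a genuine exception, as it coincides with the stochastic Ising model, but Theorem~\ref{thm:normalApprox} must hold for general reversible $P$.) Second, even granting reversibility, the claim that the degree-$k$ diagonal blocks of $Q$ have spectral radius $\le 1-kp/n$ is a real lemma (it requires checking that $Q$ preserves the degree filtration and then bounding row sums of the degree-$k$ sub-stochastic block) and cannot be asserted as given; and for non-reversible $Q$ it is the symmetrization $(Q+Q^*)/2$, not $Q$ itself, whose spectrum controls the Poincar\'e constant, and your degree-filtration argument does not obviously apply to $Q^*$. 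The paper avoids all of this by bounding $\Var(\Psi)$ through the dual coalescing-random-walk representation: it writes the covariances $h(u,v,x,y)=\Cov(\indic{\xi(x)\neq\xi(u)},\indic{\xi(y)\neq\xi(v)})$ via the four-particle dual and uses optional-stopping (martingale) arguments (Proposition~\ref{P:martmethod}, Corollary~\ref{C:martmethod}) to reduce $\Var(\Psi)$ to two-particle meeting probabilities, which by Lemma~\ref{lemma:sigma2} recombine into $\sigma^2$. That route is reversibility-free and is the intended mechanism here; your Poincar\'e shortcut would need a separate justification that is currently missing and appears to be false in general.
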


The term $1/2$ in the standardisation of $S$ in equation~\eqref{eqn:hardcondition1} comes from the fact that  $\E(S) =  1/2$, see Proposition~\ref{lemma:expectedS}.
We also remark that \eqref{eqn:hardcondition2} follows from substituting the bound on the variance of $\Psi$ from~\eqref{eqn:boundVarDelta1} into~\eqref{eqn:hardcondition1}.

The following lemma gives two bounds that can be used to obtain immediate applications of  Theorem~\ref{thm:normalApprox}. 

\begin{lemma}[Variance lower bounds]\label{lemma:easyVar1} Consider the noisy voter model
$\NVM(V,P,p$). The following two lower bounds for $\sigma^2$ hold:
\begin{enumerate}
\item $4\sigma^2 \geq \nu^2,$
\item for $p\leq 1/2$,
$4\sigma^2 \geq  \left( 1+4p t_{\hit}\right)^{-1},$ where $t_{\hit} = \max_{x,y\in V}\E_{x}(T_y)$, and $T_y$ is the hitting time of $y \in V$ by the random walk associated with $P$.
\end{enumerate}
\end{lemma}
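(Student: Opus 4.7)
The plan is to use the coalescing random walk duality for the noisy voter model. Passing to the signed spins $\eta(x):=2\xi(x)-1$ and setting $Z:=\sum_{x\in V}\pi(x)\eta(x)=2S-1$, we have $4\sigma^2=\Var(Z)=\sum_{x,y\in V}\pi(x)\pi(y)\phi(x,y)$, where $\phi(x,y):=\E[\eta(x)\eta(y)]$. The duality is obtained by tracing each vertex's opinion backwards in time: a voter event moves the ancestor via $P$, while a re-randomisation kills the ancestor and assigns that vertex an independent Bernoulli$(1/2)$ value. Conditional on a step that affects the ancestor pair of $(x,y)$, the event is independently a ``kill'' (probability $p$) or a ``move'' (probability $q$); the move events define a pair walk on $V\times V$ in which a uniformly chosen coordinate updates via $P$. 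Writing $T^*_{xy}$ for the coalescence time of this pair walk, the opinions $\xi(x)$ and $\xi(y)$ coincide whenever the pair walk coalesces before any kill, and are independent fair bits otherwise. A short calculation yields
\[
\phi(x,y)=\E\!\bigl[q^{T^*_{xy}}\bigr].
\]

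For part (1), since $\phi(x,y)\in[0,1]$ for all $x,y$ and $\phi(x,x)=1$,
\[
4\sigma^2=\sum_{x,y}\pi(x)\pi(y)\phi(x,y)\ge \sum_{x}\pi(x)^2=\nu^2.
\]

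For part (2), I would bound $\phi(x,y)$ uniformly from below via an exponential tail estimate on $T^*_{xy}$. The standard meeting-time bound $\max_{x,y}\E_{(x,y)}[T^*_{xy}]=O(t_{\hit})$ -- comparing the pair-walk meeting time with the single-walk hitting time of one walker to the other (the factor arising from only one coordinate moving per pair-walk step) -- combined with Markov's inequality and the strong Markov property of the pair walk yields an exponential tail of the form $\Prob_{(x,y)}(T^*_{xy}>t)\le e^{-t/(Ct_{\hit})}$. Inserting this estimate into the identity
\[
\E[q^{T^*_{xy}}]=1-p\sum_{t\ge0}q^{t}\Prob_{(x,y)}(T^*_{xy}>t),
\]
and using $q\ge 1/2$ together with the elementary bound $1-e^{-1/h}\ge 1/(2h)$ (for $h\ge 1$) to estimate the resulting geometric series, one obtains $\phi(x,y)\ge(1+4pt_{\hit})^{-1}$ uniformly in $x,y$ under the assumption $p\le1/2$. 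Since $4\sigma^2$ is a convex combination of the $\phi(x,y)$'s, the same lower bound transfers directly to $4\sigma^2$.

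The main technical obstacle is organising the constants in the tail estimate so as to land on exactly $4$ in the denominator: crude Markov iteration alone (giving $\Prob_{(x,y)}(T^*_{xy}>4kt_{\hit})\le 2^{-k}$) would only yield a bound of the form $(1+c\,pt_{\hit})^{-1}$ for some larger $c$, so obtaining the advertised constant requires the sharper exponential approximation of hitting times available for reversible chains (e.g.\ of Aldous--Brown type). The remaining pieces -- the duality representation of $\phi$, the non-negativity giving part (1), and the tail-to-denominator scheme -- are standard.
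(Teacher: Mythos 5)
Part (1) is correct and is essentially the paper's argument: duality gives $4\sigma^2=\sum_{x,y}\pi(x)\pi(y)\Prob(E_{xy})$ (in your notation, $\phi(x,y)=\Prob(E_{xy})=\E[q^{T^*_{xy}}]$), and keeping only the diagonal terms yields $\nu^2$.

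For part (2), you have correctly identified the structure but, as you acknowledge, there is a genuine gap in the constant. Two remarks. First, you aim for a \emph{uniform} lower bound $\phi(x,y)\geq(1+4pt_{\hit})^{-1}$ over all $(x,y)$, but this is stronger than needed and is what forces you into a worse tail constant: crude Markov iteration with the strong Markov property gives $\Prob_{(x,y)}(T^*_{xy}>kt)\leq 2^{-k}$ for $t$ a multiple of the expected meeting time, which loses a constant factor compared to the advertised bound. The paper instead \emph{averages over $\pi^2$ first} and only needs a tail bound for the meeting time started from stationarity. Second, the sharp tool is not a uniform Aldous--Brown estimate but Oliveira's lemma: for two independent \emph{continuous-time} rate-one random walks with reversible kernel, $\Prob_{\pi^2}(M^{X_1,X_2}>t)\leq e^{-t/t_{\hit}}$ with no extra multiplicative constant. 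The paper passes to the continuous-time dual (independent walks at rate $1$, absorption at rate $\delta=p/(1-p)$, so the pair survives absorption at rate $2\delta$), writes $1-4\sigma^2=\int_0^\infty\Prob_{\pi^2}(M>t)\,2\delta e^{-2\delta t}\,dt$, plugs in $e^{-t/t_{\hit}}$, and integrates to obtain $4\sigma^2\geq(1+2\delta t_{\hit})^{-1}$, which becomes $(1+4pt_{\hit})^{-1}$ once one uses $\delta\leq 2p$ for $p\leq 1/2$. Your discrete-time generating-function identity $\E[q^{T^*}]=1-p\sum_{t\geq0}q^t\Prob(T^*>t)$ is fine, but to close the argument you would still need (i) to average over $\pi^2$ rather than work pointwise, and (ii) the sharp $e^{-t/t_{\hit}}$ bound (or its discrete analogue) rather than a Markov-type tail with an unspecified constant.
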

We present the proof of Lemma~\ref{lemma:easyVar1} in Section~\ref{sec:properties}. Combining Theorem~\ref{thm:normalApprox} with Lemma~\ref{lemma:easyVar1}   yields simple conditions to establish a Gaussian limit regime for a sequence of noisy voter models $\NVM(V_n,P_n,p_n)$, which we present next.

\begin{corollary}[Simple conditions for Gaussian convergence]\label{cor:normalApprox}
For each $n \geq 1$ consider a noisy voter model $\NVM(V_n,P_n,p_n)$ with $|V_n|=n$. If
\begin{align}\label{eqn:easyCondition1}
\left(\frac{\pi^*_n}{\nu_n} \right)^3 \frac n{p_n}  \to0 \quad \text{as }n\to\infty,
\end{align}
then
$
W_n \overset{\mathcal D}{\Rightarrow} \mathcal N(0,1).
$
\end{corollary}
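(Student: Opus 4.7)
The plan is to check condition \eqref{eqn:hardcondition2} of Theorem~\ref{thm:normalApprox} directly from the hypothesis, using the first lower bound in Lemma~\ref{lemma:easyVar1}.

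First, part~(1) of Lemma~\ref{lemma:easyVar1} gives $\sigma_n \geq \nu_n/2$, hence $\pi^*_n/\sigma_n \leq 2\pi^*_n/\nu_n$. Setting $a_n := \pi^*_n/\nu_n$, the first term of \eqref{eqn:hardcondition2} is immediately controlled:
\begin{equation*}
\frac{n}{p_n}\left(\frac{\pi^*_n}{\sigma_n}\right)^3 \leq 8\,\frac{n}{p_n}\,a_n^3 \longrightarrow 0,
\end{equation*}
by the assumption \eqref{eqn:easyCondition1}.

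For the second term, I would handle it by showing $a_n/p_n \leq a_n^3 n/p_n$, so that it too is controlled by the hypothesis. The key elementary inequality is $n a_n^2 \geq 1$, which follows from
\begin{equation*}
\nu_n^2 = \sum_{x\in V_n}\pi_n(x)^2 \leq \pi^*_n \sum_{x\in V_n}\pi_n(x) = \pi^*_n, \qquad n\pi^*_n \geq \sum_{x\in V_n}\pi_n(x) = 1,
\end{equation*}
so that $n(\pi^*_n)^2 \geq \pi^*_n \geq \nu_n^2$, giving $n a_n^2 \geq 1$. Then
\begin{equation*}
\frac{\pi^*_n}{\sigma_n p_n} \leq \frac{2 a_n}{p_n} = \frac{2}{n a_n^2}\cdot\frac{a_n^3 n}{p_n} \leq 2\cdot\frac{a_n^3 n}{p_n}\longrightarrow 0.
\end{equation*}

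Both summands in \eqref{eqn:hardcondition2} thus vanish in the limit, so Theorem~\ref{thm:normalApprox} yields $W_n \overset{\mathcal D}{\Rightarrow} \mathcal N(0,1)$. There is no real obstacle here; the only non-completely-trivial step is the observation $n(\pi^*_n/\nu_n)^2 \geq 1$, which reduces the second term in \eqref{eqn:hardcondition2} to the (cube of the) first.
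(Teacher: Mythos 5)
Your proof is correct and rests on the same two ingredients as the paper's: the lower bound $4\sigma_n^2\ge\nu_n^2$ from Lemma~\ref{lemma:easyVar1} and the elementary inequality $n(\pi^*_n/\nu_n)^2\ge 1$. The paper proceeds by splitting into two cases depending on whether $(\pi^*_n/\sigma_n)^2 n\ge 1$ or $<1$, bounding the sum in~\eqref{eqn:hardcondition2} by $2(\pi^*_n/\sigma_n)^3 n/p_n$ in the first case and by $2/(\sqrt n\,p_n)$ in the second (where the fact that $\sqrt n\,p_n\to\infty$ is extracted from the hypothesis via the same inequality $(\pi^*_n/\nu_n)^2\ge n^{-1}$). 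You avoid the case distinction entirely by observing once and for all that $n(\pi^*_n/\nu_n)^2\ge 1$ lets you absorb the second summand $\pi^*_n/(\sigma_n p_n)$ directly into a constant multiple of $(\pi^*_n/\nu_n)^3 n/p_n$. This is a modest but genuine streamlining: the case split is revealed to be unnecessary, and the argument is a single uniform chain of inequalities. Both buy the same conclusion, but yours is slightly shorter and makes the role of the $n(\pi^*_n/\nu_n)^2\ge1$ observation more transparent.
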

Now consider the case in which $V$ is the vertex set of a connected graph and $P$ the transition matrix of the random walk on this graph. It is standard that the stationary distribution of $P$ is given by $\pi(x) = d(x)/(2m)$ where $d(x)$ is the degree of $x$, and $m$ is the number of edges of the graph. In this setting, we can rewrite the conditions of Corollary~\ref{cor:normalApprox} to obtain the following:

\begin{corollary}[Gaussian convergence for random walks]\label{thm:normalApproxGraphs}
Let $G_n$ be a sequence of connected graphs with vertex sets $V_n$ of size $n$, and let $d_n(x)$ be the degree of vertex $x$ in $G_n$. Consider the noisy voter model $\NVM(V_n, P_n, p_n)$ where $P_n$ is the transition matrix of the random walk on $G_n$. If
\begin{align}\label{eqn:easyConditionGraphs}
\left(\frac{\max_{x \in V} d_n(x)}{\sqrt{\sum_{x \in V} d_n(x)^2}} \right)^{3}\frac{n}{p_n} \to0\quad\text{as }n\to\infty,
\end{align}
then
$W_n \overset{\mathcal D}{\Rightarrow} \mathcal N(0,1).$
\end{corollary}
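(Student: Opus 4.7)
The plan is to obtain this as an immediate specialisation of Corollary~\ref{cor:normalApprox}: the only work is to rewrite the ratio $\pi^*_n/\nu_n$ appearing in \eqref{eqn:easyCondition1} in terms of the vertex degrees of $G_n$. There is no extra probabilistic content, since for the simple random walk on a connected graph $P_n$ is automatically irreducible and reversible with respect to $\pi_n(x) = d_n(x)/(2m_n)$, where $m_n$ is the number of edges of $G_n$.

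First I would record the explicit formulas for the two quantities entering \eqref{eqn:easyCondition1}. From $\pi_n(x) = d_n(x)/(2m_n)$ we get
\begin{align*}
\pi^*_n \;=\; \frac{\max_{x\in V_n} d_n(x)}{2m_n}, \qquad \nu_n^2 \;=\; \sum_{x\in V_n}\pi_n(x)^2 \;=\; \frac{\sum_{x\in V_n} d_n(x)^2}{4m_n^2}.
\end{align*}
The common factor $2m_n$ then cancels in the ratio, giving
\begin{align*}
\frac{\pi^*_n}{\nu_n} \;=\; \frac{\max_{x\in V_n} d_n(x)}{\sqrt{\sum_{x\in V_n} d_n(x)^2}},
\end{align*}
so condition \eqref{eqn:easyConditionGraphs} is nothing other than condition \eqref{eqn:easyCondition1} rewritten in graph-theoretic language.

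Finally I would invoke Corollary~\ref{cor:normalApprox} directly to conclude that $W_n \overset{\mathcal D}{\Rightarrow} \mathcal N(0,1)$. There is no genuine obstacle here; the only thing to be slightly careful about is that $m_n\ge 1$ (which holds since each $G_n$ is connected with $n\ge 2$, and the trivial case $n=1$ can be safely ignored in a limit statement), so that the formulas above are well-defined and the cancellation is legitimate.
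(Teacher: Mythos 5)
Your proof is correct and is essentially the paper's own argument: the paper likewise obtains this corollary as an immediate specialisation of Corollary~\ref{cor:normalApprox} by substituting $\pi_n(x)=d_n(x)/(2m_n)$ so that the $2m_n$ factor cancels in $\pi^*_n/\nu_n$. No further comment is needed.
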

In the case that $p_n$ is a fixed constant in $(0,1)$, the condition $\pi^\star_n/\nu_n = o(n^{-1/3})$ implies~\eqref{eqn:easyConditionGraphs}, while~\eqref{eqn:easyConditionGraphs} implies $\pi^\star_n/\nu_n \to 0$.
The condition $\pi^\star_n/\nu_n = o(n^{-1/3})$ is satisfied by many families of graphs, e.g.\! for $n$-vertex regular graphs, we have $\pi^\star_n/\nu_n = n^{-1/2}$, in which case we need $p_n \gg n^{-1/2}$ to satisfy equation equation~\eqref{eqn:easyConditionGraphs}. On the other hand
it is not satisfied by, for example, the star graph, since $\nu_n^2/4 \geq 1/16$.

\begin{corollary}[Gaussian convergence for random walks on regular graphs]\label{cor:gaussianreg} Let $G_n$ be a sequence of regular graphs with vertex sets $V_n$ of size $n$, and let $P_n$ be the transition matrix of the random walk on $G_n$, and consider a sequence of noisy voter models $\NVM(V_n,P_n, p_n)$. If $p_n\gg n^{-1/2}$, then $
W_n \overset{\mathcal D}{\Rightarrow} \mathcal N(0,1).$
\end{corollary}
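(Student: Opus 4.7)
The plan is to deduce this corollary as an immediate specialisation of Corollary~\ref{thm:normalApproxGraphs}, since regularity makes the combinatorial ratio appearing in~\eqref{eqn:easyConditionGraphs} extremely easy to evaluate.

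First I would observe that for each $n$, the graph $G_n$ is $d_n$-regular for some $d_n \ge 1$. Consequently $\max_{x\in V_n} d_n(x)=d_n$ and $\sum_{x\in V_n} d_n(x)^2 = n d_n^{\,2}$, so that
\[
\frac{\max_{x\in V_n} d_n(x)}{\sqrt{\sum_{x\in V_n} d_n(x)^2}} \;=\; \frac{d_n}{\sqrt{n\,d_n^{\,2}}} \;=\; \frac{1}{\sqrt{n}},
\]
independently of $d_n$. Substituting into~\eqref{eqn:easyConditionGraphs}, the left-hand side becomes $\bigl(1/\sqrt{n}\bigr)^3 \cdot (n/p_n) = 1/(\sqrt{n}\, p_n)$.

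Next I would note that the hypothesis $p_n\gg n^{-1/2}$ is exactly the statement $\sqrt{n}\, p_n\to\infty$, i.e.\ $1/(\sqrt{n}\,p_n)\to 0$. Hence condition~\eqref{eqn:easyConditionGraphs} holds, and Corollary~\ref{thm:normalApproxGraphs} delivers $W_n \overset{\mathcal D}{\Rightarrow}\mathcal N(0,1)$.

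There is no genuine obstacle here; the argument is purely a substitution. All the analytic content — the Stein-type normal approximation bound in Theorem~\ref{thm:normalApprox}, the variance lower bound in Lemma~\ref{lemma:easyVar1}, and the passage to graph-theoretic language in Corollary~\ref{thm:normalApproxGraphs} — has already been carried out; regularity only serves to collapse the degree-ratio to the universal value $n^{-1/2}$.
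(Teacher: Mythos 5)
Your proof is correct and follows exactly the route the paper intends: specialise Corollary~\ref{thm:normalApproxGraphs} (equivalently, note that for a regular graph $\pi^*_n/\nu_n = n^{-1/2}$, as the paper itself remarks just above the corollary), so that condition~\eqref{eqn:easyConditionGraphs} reduces to $1/(\sqrt{n}\,p_n)\to 0$, which is precisely the hypothesis $p_n\gg n^{-1/2}$.
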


We also show that for very small values of $p_n$, the distribution of $S_n$ converges in distribution to a Bernoulli random variable:

\begin{proposition}[Conditions for Bernoulli convergence]\label{prop:small_p}
For each $n \geq 1$, consider a noisy voter model $\NVM(V_n,P_n,p_n$). Let $M_n$ be the meeting time of two independent continuous-time random walks (moving at rate 1 each according to $P_n$) started from stationarity. If $p_n\E(M_n)\to 0$ then $\sigma_n^2 \to 1/4$ and $S_n \overset{\mathcal D}{\Rightarrow} \mathcal {B}er(1/2)$.
\end{proposition}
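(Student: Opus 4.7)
My plan is to prove the stronger statement $\sigma_n^2\to 1/4$ and then deduce the $\mathcal{B}er(1/2)$ limit from the boundedness $S_n\in[0,1]$ combined with $\E(S_n)=1/2$. The key tool is a standard backward-time graphical representation of the noisy voter model. Embedding the chain in continuous time so that each vertex updates at rate~$1$ (each update being a voter step with probability $q=1-p$ and a re-randomisation with probability $p$) does not change $\sd$, and the opinion $\xi(x)$ under $\sd$ is produced by running backward from $x$ a continuous-time random walk that jumps according to $P$ at rate $q$ and is killed at rate $p$; at the killing time a fresh $\operatorname{Bernoulli}(1/2)$ value is sampled. For two sites $x,y$ the two ancestral walks are independent until they meet.

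Let $M^{(q)}_{xy}$ be the meeting time of two independent rate-$q$ walks started at $(x,y)$ and $T_x,T_y\sim\operatorname{Exp}(p)$ the independent killing times. On the event $\{M^{(q)}_{xy}<\min(T_x,T_y)\}$ the walks coalesce before any killing, giving $\xi(x)=\xi(y)$; on the complement the two opinions are independent $\operatorname{Bernoulli}(1/2)$'s. Using that $\min(T_x,T_y)\sim\operatorname{Exp}(2p)$ is independent of the walks, the spin-flip symmetry $\Prob(\xi(x)=\xi(y)=1)=\tfrac12\Prob(\xi(x)=\xi(y))$, and the time-rescaling $M^{(q)}_{xy}\stackrel{d}{=}M_{xy}/q$ (where $M_{xy}$ is the rate-$1$ meeting time), a short computation followed by averaging against $\pi\otimes\pi$ yields the clean identity
\begin{equation*}
\sigma_n^2\;=\;\tfrac14\,\E\!\left[e^{-(2p_n/q_n)M_n}\right].
\end{equation*}
Since $p_n$ is non-increasing we may assume $p_1<1$ (the case $p_n\equiv 1$ is degenerate and inconsistent with $p_n\E(M_n)\to 0$ for non-trivial chains), so $q_n\geq 1-p_1>0$. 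Applying $1-e^{-x}\leq x$ then gives
\begin{equation*}
0\;\leq\;\tfrac14-\sigma_n^2\;\leq\;\frac{p_n}{2q_n}\,\E(M_n)\;\longrightarrow\;0.
\end{equation*}

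For the distributional limit, the identity $\E[S_n(1-S_n)]=\E(S_n)-\E(S_n^2)=\tfrac14-\sigma_n^2$ and the above give $\E[S_n(1-S_n)]\to 0$; together with $S_n(1-S_n)\geq 0$ this yields $\Prob(S_n\in[\delta,1-\delta])\to 0$ for every $\delta\in(0,1/2)$ by Markov's inequality, so $S_n$ concentrates on $\{0,1\}$. A short argument using $\E(S_n)=1/2$ and $\Prob(S_n<\delta)+\Prob(S_n>1-\delta)\to 1$ shows that both tail probabilities tend to $1/2$, which is exactly $S_n\overset{\mathcal D}{\Rightarrow}\mathcal{B}er(1/2)$. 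The main technical hurdle is the duality set-up in the first paragraph: one must carefully track the factor $q$ appearing because the backward walk only moves on non-noise updates, and the time-rescaling between the backward rate-$q$ walks and the rate-$1$ walks that define $M_n$. Once the identity $\sigma_n^2=\tfrac14\E[e^{-(2p_n/q_n)M_n}]$ is in place, both the variance limit and the distributional convergence are essentially one-line consequences.
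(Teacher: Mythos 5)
Your proof is correct and uses the same duality framework as the paper, but the key estimate is genuinely simpler. The paper also reduces to the identity $1-4\sigma_n^2 = \Prob_{\pi_n^2}^{\RW(2)}(M_n > Z)$ with $Z\sim\mathrm{Exp}(2p_n/q_n)$ independent of the walks (equivalently, $4\sigma_n^2 = \E[e^{-(2p_n/q_n)M_n}]$, as you derive), but then proceeds by splitting the integral $\int_0^\infty \Prob(M_n>t)2\delta e^{-2\delta t}\,dt$ at $t_{\mathrm{meet}}$, applying Markov's inequality on the tail, and estimating the resulting series via $\sum_i e^{-ai}/i = -\log(1-e^{-a})$, finally noting $(1-e^{-x})(1-\log(x/(1+x)))\to 0$. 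Your route--writing $1/4-\sigma_n^2 = \frac14\E\bigl[1-e^{-(2p_n/q_n)M_n}\bigr]$ and invoking $1-e^{-x}\le x$ to get $1/4-\sigma_n^2 \le (p_n/2q_n)\E(M_n)$--replaces all of that with a one-line inequality, and is cleaner. Both proofs implicitly rely on $q_n$ being bounded away from 0, which follows from the standing convention $p_n\in(0,1)$ together with $p_n$ non-increasing; your remark about the $p_n\equiv 1$ case is slightly roundabout but harmless. For the distributional conclusion the paper uses the one-liner that a $[0,1]$-valued random variable with mean $1/2$ and variance $1/4$ is necessarily $\mathcal{B}er(1/2)$, whereas you expand this into a concentration argument via $\E[S_n(1-S_n)]=1/4-\sigma_n^2$ and Markov; these are the same in substance.
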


The following conjecture is natural; however it seems hard to prove in general, even for regular graphs. 

\begin{conjecture}\label{conjec}
For each $n \geq 1$, consider a noisy voter model $\NVM(V_n,P_n,p_n$) such that $\pi^*_n/\nu_n \to 0$. If $p_n\E(M_n)\to\infty$ then
$
W_n \overset{\mathcal D}{\Rightarrow} \mathcal N(0,1).
$
\end{conjecture}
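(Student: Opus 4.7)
The plan is to use the graphical (dual) representation of the noisy voter model to express $\xi \sim \sd$ as a function of coalescing random walks with killing, and then to apply a conditional CLT given the induced cluster partition. Passing to the continuous-time version of $\NVM(V_n,P_n,p_n)$, which has the same stationary distribution, a sample $\xi \sim \sd$ can be realised as follows: start one random walk from each vertex of $V_n$ and run them backward in time, each walk moving independently according to $P_n$ and coalescing with any walk it meets, and each walker being killed independently at an exponential rate proportional to $p_n$. Let $\mathcal{C}$ denote the induced partition of $V_n$: two vertices lie in the same block iff their walkers coalesce before either is killed. Independently of $\mathcal{C}$, sample an i.i.d.\ family $(B_C)_{C \in \mathcal{C}}$ of $\mathcal{B}er(1/2)$ variables; then $\xi(x) = B_{C(x)}$ almost surely, where $C(x)$ is the block containing $x$.

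Setting $W_C := \sum_{x \in C} \pi_n(x)$, this gives $S_n - 1/2 = \sum_{C \in \mathcal{C}} W_C (B_C - 1/2)$ and hence $\sigma_n^2 = \tfrac14 \E\bigl[\sum_C W_C^2\bigr] = \tfrac14 \sum_{x,y} \pi_n(x)\pi_n(y) \Prob(x \sim_{\mathcal{C}} y)$. For reversible $P_n$ the pairwise coalescence probability takes the form $\E[\exp(-c\,p_n M_n(x,y))]$ for a universal constant $c>0$, where $M_n(x,y)$ is the meeting time of two walks starting from $x$ and $y$. Averaging against $\pi_n \otimes \pi_n$ and using that, in broad generality, the meeting time from stationarity is asymptotically exponential with mean $\E(M_n)$, one expects $\sigma_n^2 = \nu_n^2 (1 + o(1))/4$ under $p_n \E(M_n) \to \infty$. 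Conditionally on $\mathcal{C}$, the variable $S_n - 1/2$ is a sum of independent bounded centred random variables with conditional variance $\tfrac14 \sum_C W_C^2$, so the Berry--Esseen bound gives
$\sup_t \bigl| \Prob\bigl(2(S_n - 1/2)/\sqrt{\sum_C W_C^2} \leq t \given \mathcal{C}\bigr) - \Phi(t) \bigr| \leq K \max_C W_C / \sqrt{\sum_C W_C^2}$
almost surely, for a universal constant $K$. Hence $W_n \overset{\mathcal D}{\Rightarrow} \mathcal{N}(0,1)$ follows once one proves, in probability: (i) $\sum_C W_C^2 / \nu_n^2 \to 1$; and (ii) $\max_C W_C / \nu_n \to 0$.

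Assertion (ii) reduces to an $L^2$ bound on the $\pi_n$-weight of the cluster of a typical vertex, combining $\pi^*_n / \nu_n \to 0$ with the low coalescence probability guaranteed by $p_n \E(M_n) \to \infty$. The genuinely difficult step is (i): expanding
$\Var\bigl(\sum_C W_C^2\bigr) = \sum_{x,y,u,v} \pi_n(x)\pi_n(y)\pi_n(u)\pi_n(v)\bigl[\Prob(x \sim y,\, u \sim v) - \Prob(x \sim y)\Prob(u \sim v)\bigr]$
forces one to control \emph{four-point} coalescence probabilities, i.e.\ the joint distribution of several meeting times, rather than just first-moment information on a single meeting time. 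Such estimates are available under strong structural hypotheses on $P_n$ (e.g.\ vertex-transitivity, or a comparison between the spectral gap and $p_n$), but do not appear to follow from the blanket assumptions $p_n \E(M_n) \to \infty$ and $\pi^*_n / \nu_n \to 0$ alone --- which is precisely why the statement is recorded as a conjecture rather than a theorem.
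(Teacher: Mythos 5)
The statement you are addressing is presented in the paper as a \emph{conjecture}, with no proof offered; the authors explicitly remark that it ``seems hard to prove in general, even for regular graphs.'' Your submission is therefore best read as a proof \emph{plan} together with a diagnosis of why it does not close, and as such it is essentially correct. Your route --- realise $\xi\sim\sd$ via the killed coalescing dual, condition on the cluster partition $\mathcal{C}$ of $V_n$, observe that $S_n-1/2=\sum_{C\in\mathcal C} W_C(B_C-1/2)$ is conditionally a sum of independent bounded centred terms, and apply a conditional Berry--Esseen bound --- differs from the paper's own Gaussian-approximation machinery (Stein's method via exchangeable pairs, Theorem~\ref{thm:normalApprox} through R\"ollin's inequality), but both approaches bottom out at the same obstruction: quantitative control of fourth-order correlations of the dual. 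In your setup this is $\Var\bigl(\sum_C W_C^2\bigr)$; in the paper's treatment it is $\Var(\Psi)$, whose analysis (Proposition~\ref{P:martmethod}, Lemma~\ref{lemma:valueA}) likewise reduces to four-particle meeting/absorption probabilities $h_0(u,v,x,y)$. You correctly identify that the blanket hypotheses $p_n\E(M_n)\to\infty$ and $\pi^*_n/\nu_n\to0$ encode only two-point (first-moment) information and are not known to control these four-point quantities without additional structure, which is exactly why the paper records the statement as a conjecture.

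Two minor caveats. First, the pairwise coalescence probability is $\E[e^{-2\delta M}]$ with $\delta=p/(1-p)$ (see the proof of Lemma~\ref{lemma:easyVar1}), so the constant in $\E[\exp(-cp_nM_n(x,y))]$ is $\approx 2$ as $p_n\to0$ rather than a fixed universal constant; this is immaterial. Second, your step (ii), the claim that $\max_C W_C/\nu_n\to0$ in probability, is also not obviously easier than (i): a Markov/union argument over the $\E[W_{C(x)}]$ gives only averaged control, and without further structural assumptions the cluster $\pi_n$-weights could be heavy-tailed, so (ii) too seems to require more than the stated hypotheses. Nevertheless (i) is the dominant obstacle, and your diagnosis of it as the reason the statement remains open is accurate and aligned with the paper's own remarks.
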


We expect our techniques are not sufficient to obtain a proof of Conjecture~\ref{conjec}, as the condition in the conjecture involves (expectations of) meeting times of only two particles; on the other hand the conditions appearing in~Theorem \ref{thm:normalApprox} (in particular for the quantity $\var(\Psi)$) involve meeting times of four particles. Thus the conditions in Theorem~\ref{thm:normalApprox}  are likely stronger than needed as probabilities of events involving four particles typically can not be bounded by probabilities of events involving just two particles, except if the graph has nice structural properties such as fast mixing.

Note that Proposition~\ref{prop:small_p} combined with the previous conjecture suggests that for transition matrices $P_n$ with $\pi^*_n/\nu_n \to 0$ the inverse expected meeting time is the critical rate of $p_n$ which separates the ordered and disordered phases.

The next result identifies conditions for which the critical rate $p_c$ is equal to $1/n$. Recall that $t_{\hit} = \max_{x,y\in V}\E_{x}(T_y)$, with $T_y$ is the hitting time of $y \in V$ by the random walk associated with $P$.

\begin{proposition}[General conditions for order-disorder transition]\label{prop:regularhitn}
For each $n \geq 1$, consider a noisy voter model $\NVM(V_n,P_n,p_n)$ such that $t_{\hit}= O(n)$ and $\pi^*_n = O(1/n)$.
\begin{enumerate}
\item If $p_n n\to \infty$ then $W_n\overset{\mathcal D}{\Rightarrow} \mathcal N(0,1)$.
\item If $p_n n \to 0$ then $S_n \overset{\mathcal D}{\Rightarrow} \mathcal Ber(1/2)$.
\end{enumerate}

\end{proposition}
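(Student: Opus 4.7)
The plan is to split on the two regimes of $p_n$, applying Theorem~\ref{thm:normalApprox} for Part~(1) and Proposition~\ref{prop:small_p} for Part~(2). The assumption $t_{\hit}=O(n)$ is what makes the required bounds available in each case.

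\textbf{Part (1), $p_n n\to\infty$.} I would verify condition~\eqref{eqn:hardcondition2} of Theorem~\ref{thm:normalApprox}. Since $p_n$ is non-increasing, it converges to some $p_\infty\in[0,1]$. If $p_\infty>0$, then $p_n$ is bounded away from $0$; Lemma~\ref{lemma:easyVar1}(1) together with the elementary bound $\nu_n^2\geq 1/n$ (Cauchy--Schwarz) gives $\sigma_n^2\geq 1/(4n)$, and substituting $\pi_n^*=O(1/n)$ makes both terms of~\eqref{eqn:hardcondition2} of order $n^{-1/2}$, so they vanish. If $p_\infty=0$, then eventually $p_n\leq 1/2$, so Lemma~\ref{lemma:easyVar1}(2) combined with $t_{\hit}=O(n)$ yields $\sigma_n^2\geq c/(1+p_n n)$ for some constant $c>0$. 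Substituting $\pi^*_n=O(1/n)$ and this lower bound for $\sigma_n$ into~\eqref{eqn:hardcondition2} gives
\begin{align*}
\frac{n}{p_n}\Big(\frac{\pi^*_n}{\sigma_n}\Big)^3 = O\Big(\frac{(1+p_n n)^{3/2}}{p_n n^2}\Big) = O\Big(\sqrt{\tfrac{p_n}{n}}\Big) \to 0,
\end{align*}
and
\begin{align*}
\frac{\pi^*_n}{\sigma_n p_n} = O\Big(\frac{\sqrt{1+p_n n}}{p_n n}\Big) = O\Big(\frac{1}{\sqrt{p_n n}}\Big) \to 0,
\end{align*}
where the last equalities in each display use $p_n n\to\infty$ and $p_n\leq 1$. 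Condition~\eqref{eqn:hardcondition2} therefore holds, and Theorem~\ref{thm:normalApprox} yields $W_n\overset{\mathcal D}{\Rightarrow}\mathcal N(0,1)$.

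\textbf{Part (2), $p_n n\to 0$.} I would verify the hypothesis $p_n\E(M_n)\to 0$ of Proposition~\ref{prop:small_p}. For a reversible chain, the expected meeting time of two independent continuous-time walks started from stationarity satisfies $\E(M_n)=O(t_{\hit})$ by a classical bound (see, e.g., Aldous--Fill, Chapter~14, or Oliveira's work on coalescing random walks). Combined with $t_{\hit}=O(n)$ this gives $p_n\E(M_n)=O(p_n n)\to 0$, and Proposition~\ref{prop:small_p} yields $\sigma_n\to 1/4$ and $S_n\overset{\mathcal D}{\Rightarrow}\mathcal Ber(1/2)$.

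The main obstacle is Part~(1): pairing the two clauses of Lemma~\ref{lemma:easyVar1} to obtain $\sigma_n^2=\Omega(1/(1+p_n n))$ uniformly in $n$, and then controlling the two competing terms in~\eqref{eqn:hardcondition2} across the entire range $1/n\ll p_n\leq 1$. Part~(2) is essentially immediate once the meeting time bound $\E(M_n)=O(t_{\hit})$ is invoked; the only work is to locate a convenient statement of that bound.
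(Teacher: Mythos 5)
Your proposal is correct and follows essentially the same route as the paper: split on whether $p_n$ is bounded away from zero, use Lemma~\ref{lemma:easyVar1}(1) (via Corollary~\ref{cor:normalApprox} in the paper) in the bounded case, use Lemma~\ref{lemma:easyVar1}(2) with $t_{\hit}=O(n)$ to obtain $\sigma_n^2=\Omega(1/(p_nn))$ in the small-$p$ case and substitute into condition~\eqref{eqn:hardcondition2}, and for Part~(2) invoke $\E(M_n)\le t_{\hit}=O(n)$ (Aldous--Fill, Proposition 14.5) together with Proposition~\ref{prop:small_p}. The only cosmetic difference is that the paper splits at the threshold $p_n>1/2$ rather than at $p_\infty>0$, which amounts to the same thing since $p_n$ is non-increasing.
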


The conditions of Proposition~\ref{prop:regularhitn} are satisfied by $P_n$ the random walk on several graph families including hypercubes, regular expanders, grids and tori in dimension larger than 2, and complete graphs.

On the other hand, these general results do not cover two important cases: the cycle, which we denote $C_n$ to indicate it has $n$ vertices, and the two-dimensional torus, denoted by $\mathbb{T}_n$ to indicate it has $n$ vertices (note that when writing $\mathbb{T}_n$ we are implicitly assuming that $n$ is a square number).  For these graphs, we do not only need a sharp lower bound for $\sigma^2_n$ but also better control of the term at the right-hand side of \eqref{eqn:hardcondition1}, which requires some ad-hoc arguments. For the cycle the proof features several couplings with a random walk on the integers, and thus it cannot be extended to other graphs, whereas our argument on the torus may be extended to other transitive graphs.

\begin{proposition}[Order-disorder transition for cycle and 2D torus]\label{P:grid} Consider the noisy voter model $\NVM(V_n,P_n,p_n)$ where $P_n$ is a  sequence of transition matrices associated with the random walk on the graph $G_n = (V_n,E_n)$, and $p_n$ is non-increasing. Then the following holds true:
\begin{enumerate}
\item\label{thm:grid1} Let $G_n=C_n$ be the cycle on $n$  vertices.  If $p_n n^2\to \infty$ then $W_n \overset{\mathcal D}{\Rightarrow} \mathcal N(0,1)$, and if $p_nn^2\to 0$ then $S_n \overset{\mathcal D}{\Rightarrow} \mathcal Ber(1/2)$.
\item\label{thm:grid2} Let $G_n = \mathbb{T}_n$ be a 2-dimensional torus on $n$ vertices. If $p_nn\log n\to \infty$ then $W_n \overset{\mathcal D}{\Rightarrow} \mathcal N(0,1)$, and if $p_nn\log n\to 0$ then $S_n \overset{\mathcal D}{\Rightarrow} \mathcal Ber(1/2)$.
\end{enumerate}
\end{proposition}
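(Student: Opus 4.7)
The plan is to treat the two directions of the phase transition separately.

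\textbf{Bernoulli convergence} is a direct application of Proposition~\ref{prop:small_p}: it suffices to show $p_n\E(M_n)\to 0$. Since both $C_n$ and the 2D torus are vertex-transitive abelian Cayley graphs, the difference of two independent continuous-time random walks is itself a random walk on the same graph (at twice the speed) started from a uniform vertex; thus $\E(M_n)$ equals, up to a constant, the expected hitting time of the origin from a uniformly chosen vertex. Classical calculations give $\E(M_n)=\Theta(n^2)$ on $C_n$ and $\E(M_n)=\Theta(n\log n)$ on the 2D torus, so $p_nn^2\to 0$ (resp.\ $p_nn\log n\to 0$) yields $p_n\E(M_n)\to 0$ and Proposition~\ref{prop:small_p} concludes.

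For the \textbf{Gaussian convergence} I would apply Theorem~\ref{thm:normalApprox}. Both graphs are regular, so $\pi_n^\ast=1/n$ and $\nu_n^2=1/n$, and the right-hand side of~\eqref{eqn:hardcondition1} reduces to
\begin{equation*}
C\left\{\frac{1}{n^2p_n\sigma_n^3}+\frac{1}{n^{3/2}p_n^{1/2}\sigma_n^2}+\frac{\sqrt{\Var\Psi_n}}{np_n\sigma_n^2}\right\}.
\end{equation*}
A quick check shows that the hitting-time bound $\sigma_n^2\geq c/(p_n t_{\hit})$ from Lemma~\ref{lemma:easyVar1}(2), combined with the generic estimate~\eqref{eqn:boundVarDelta1} on $\Var\Psi_n$, is \emph{not} strong enough to force all three terms to $o(1)$ in the claimed regimes (it would force, say on $C_n$, the much stronger $n^{2/3}p_n\to\infty$). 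Two sharper ingredients are therefore needed: a matching lower bound on $\sigma_n^2$, and a refined upper bound on $\Var\Psi_n$.

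For the sharp $\sigma_n^2$ I would use the coalescent dual of the noisy voter model to write $\Cov(\xi(x),\xi(y))=\tfrac14\Prob(\tau_{\mathrm{meet}}(x,y)<\tau_{\mathrm{noise}})$, so that by translation invariance
\begin{equation*}
\sigma_n^2=\frac{1}{4n}\sum_{k\in V_n}h_n(k),
\end{equation*}
where $h_n(k)$ is the probability that the single (difference) random walk, killed at a rate proportional to $p_n$, reaches the origin from $k$. On $C_n$ I would couple this killed walk with a killed simple random walk on $\Z$ and use the explicit 1D killed Green's function $G_\lambda(0,k)\asymp e^{-|k|\sqrt{2\lambda}}/\sqrt{\lambda}$ to evaluate the sum, yielding $\sigma_n^2\asymp 1/(n\sqrt{p_n})$ whenever $p_nn^2\to\infty$. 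On the 2D torus I would use the abelian Fourier decomposition of the group to estimate the killed Green's function directly; the logarithmic behaviour of the 2D Green's function is the source of the $\log n$ appearing in the threshold. For the refined $\Var\Psi_n$, note that on a regular graph $\Psi_n$ is (up to a factor $1/n$) the $\xi$-edge-boundary of $G_n$, so $\Var\Psi_n$ is a sum of four-point covariances $\Cov(\ind_{\xi(x)\neq\xi(y)},\ind_{\xi(z)\neq\xi(w)})$. Using the coalescing dual on four walks with killing, these can be bounded by products of two-point meeting probabilities from the previous step, since the four walks decorrelate once sufficiently many noise events separate them; the resulting $\Var\Psi_n$ is small enough (e.g.\ of order $\sqrt{p_n}/n$ on $C_n$) that the third error term also becomes $o(1)$ in the stated regime.

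The main obstacle, and the reason the authors resort to \emph{ad hoc} arguments for these two cases, is coordinating the sharp variance lower bound with the four-point decorrelation bound simultaneously through~\eqref{eqn:hardcondition1}. The cycle analysis reduces everything to one-dimensional killed-walk Green's functions via the $\Z$-coupling, which is clean but intrinsically one-dimensional and does not lift to higher graphs; the torus argument replaces the coupling by Fourier/spectral estimates that rely only on transitivity, which explains the authors' remark that the torus argument is expected to extend to other transitive graphs.
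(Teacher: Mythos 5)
Your plan for the Bernoulli direction matches the paper exactly: both the cycle and 2D torus cases follow directly from Proposition~\ref{prop:small_p} after noting $\E(M_n)=\Theta(n^2)$ on $C_n$ and $\Theta(n\log n)$ on the torus, as the paper itself remarks. Your diagnostic for the Gaussian direction is also correct and important: applying Theorem~\ref{thm:normalApprox} with the generic bounds of Lemma~\ref{lemma:easyVar1}(2) and~\eqref{eqn:boundVarDelta1} is indeed insufficient, and the paper agrees that a refined bound on $\Var\Psi$ is the missing ingredient.

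However, there are two points where your plan diverges from what actually works. First, you claim that a \emph{matching} lower bound on $\sigma_n^2$ is needed on the torus. In fact the paper's torus proof (proof of Proposition~\ref{P:grid}~(\ref{thm:grid2})) uses only the generic bound $\sigma^2\gtrsim (p\,t_{\mathrm{hit}})^{-1}\gtrsim (pn\log n)^{-1}$ from Lemma~\ref{lemma:easyVar1}; the sharp $\sigma^2$ is only needed for the cycle (Lemma~\ref{L:cyclevar}, which the paper derives by a martingale optional-stopping argument rather than an explicit killed Green's function, though the two are equivalent). All the extra work on the torus goes into the refined $\Var\Psi$ estimate (Proposition~\ref{thm:resultTorus}), for which the hitting-time lower bound on $\sigma^2$ then suffices when plugged back into~\eqref{eqn:hardcondition1}.

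Second, and more seriously, the core technical step — the refined bound on $\Var\Psi$ — is where your proposal has a real gap. You assert that the four-point covariances ``can be bounded by products of two-point meeting probabilities . . . since the four walks decorrelate once sufficiently many noise events separate them,'' but no concrete mechanism is given for this, and such a factorisation is precisely what is hard to establish: naive decorrelation arguments give the Corollary~\ref{C:martmethod}-type bound (a \emph{sum} of pair meeting probabilities) that you already identified as too weak. The paper's actual route is a martingale decomposition of the four-point covariance into ordered meeting/absorption events (Proposition~\ref{P:martmethod}); on the torus this is then combined with a reversibility/time-doubling trick to produce a genuine \emph{product} of two-walk tail probabilities (equation~\eqref{eqn:importantdensitytorus}), which is then controlled by a local-CLT estimate (Lemma~\ref{lemma:tailmeetTorus2}); on the cycle, the argument is still more intricate, passing through a chain of auxiliary processes (CAB, WAG, WAIG, WMIG), a positive-association (FKG) inequality, and the hitting-time theorem. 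Your target $\Var\Psi\lesssim \sqrt{p}/n$ on the cycle does match what the paper eventually obtains, but the path to it is the hard part of the proof, and the sketch you give does not supply it. You should either develop a concrete decorrelation inequality or adopt the ordered-event martingale decomposition of Proposition~\ref{P:martmethod}.
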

Note that on both the cycle and the torus, the cases of  convergence of $S_n$ to a $\mathcal Ber(1/2)$ random variable follow directly from Proposition~\ref{prop:small_p}.

In order to apply Theorem~\ref{thm:normalApprox} to obtain the Gaussian convergence results in Proposition~\ref{P:grid}, we require bounds on $\sigma^2_n$ and $\Var(\Psi_n)$. Consider the case of the cycle. Using duality, we can relate $\sigma_n^2$ to the probability that two independent random walks on the cycle -- started from uniformly chosen locations -- have meeting time less than an independent Geo$(p)$ random variable (which plays the role of the noise). In fact, $\sigma_n^2$ can be computed explicitly in terms of $p$ and $n$ (for the cycle), see Lemma~\ref{L:cyclevar}. Duality can also be used to relate $\Var(\Psi_n)$ to events involving four coalescing random walks. The relevant events are of the form: two particular walkers meet before any other walkers meet and this first meeting time is less than a Geo$(p)$ random variable which itself is less than the meeting time of another particular pair of walkers. Controlling the probability of such events is challenging due to the complicated dependencies and we make use of several martingale optional stopping arguments together with the FKG inequality. This type of problem is similar to those studied in~\cite{addario2016random}.

%
%
%

\begin{remark}[Discrete-time vs continuous-time]
Although  the results in this paper are given for the discrete-time version of the noisy voter model, they can also be established for a continuous-time version. In the continuous-time setting, we have a transition-rate matrix $R = (r(x,y): x,y \in V)$, and a noise rate $\delta$. Each vertex $x$ adopts the opinion of vertex $y$ at rate $r(x,y)$, and re-randomises its opinion at rate $\delta$ (equivalently, flips its opinion at rate $\delta/2$). We can then transform the continuous-time process into a discrete one with the same stationary distribution by using the standard uniformalisation method. Indeed, for given transition-rate $R$, and noise rate $\delta$, the analogue discrete process has transition matrix $P = I+ \frac{R}{r_{\max}}$, where $r_{\max} = \max\{|r(x,x)|: x \in V\}$ is the maximum transition rate from any $x \in V$, and re-randomisation probability $p = \delta/(r_{\max} + \delta)$. Note the stationary distribution of $P$ and $R$ are the same, so the results of this paper are consistent in both settings.
We can also transform the discrete model into a continuous one; for this, each vertex $x$ re-randomises its opinion at rate $p$, and adopts the opinion of vertex $y$ at rate $(1-p)P(x,y)$. 
\end{remark}
{\sc Notation.}\hspace{1em}
We make use of standard asymptotic notation. Given positive real-valued functions $f$ and $g$, we say that $f(x) = O(g(x))$ or $f(x)\lesssim g(x)$ if $\limsup_{x\to \infty}  f(x)/g(x)\leq C$ for some constant $C>0$. 

Similarly $f(x) = \Omega(g(x))$ or $f(x)\gtrsim g(x)$ if $\liminf_{x\to \infty}  f(x)/g(x)\geq  c>0$ for some constant $c$; and $f(x) = \Theta(g(x))$ or $f(x)\asymp g(x)$ if and only if $f(x) = O(g(x))$ and $f(x) = \Omega(g(x))$.  Finally, $f(x) = o(g(x))$ if $\lim_{x\to \infty} f(x)/g(x) = 0 $, in particular $f(x) = o(1)$ implies that $\lim_{x\to \infty} f(x) = 0$.

We use the following set notation: if $S$ is a finite set and $k\in\mathbb{N}$, we write $S^k$ for the set of $k$-tuples from $S$ and $(S)_k$ for the set of $k$-tuples from $S$ without repeats.  Additionally, we use standard language and notation of basic graph theory, in particular $x\sim y$ means that vertices $x$ and $y$ are neighbours. 

\section{The dual process}~\label{S:dual}
It is classical that the voter model has a dual process of coalescing random walks, see for example \cite{liggett}. The noisy voter model also enjoys a coalescing random walk dual process that allows us to recast probabilities involving the noisy voter model in terms of probabilities involving the dual (see Proposition~\ref{thm:duality}) which are often more tractable, and allows for more informative sampling of the stationary distribution~$\Gamma$.

To be more specific, we use a version of Stein's method (Theorem~\ref{thm:rollin}) to obtain conditions (involving variance and covariance) for Gaussian convergence. The dual process will allow us to transform the variance, respectively covariance, estimation problem into one of estimating probabilities of events related to interactions between two, respectively four, particles.

\subsection{Voter model with stubborn vertices}
Consider a noisy voter model $\NVM(V,P,p)$. Before constructing its dual, we introduce a process called the \emph{voter model with stubborn vertices}, so-called as it features vertices whose opinions remain unchanged over time (see \cite{Yildiz2013Binary} for further discussion on the voter model with stubborn vertices).

The role of the stubborn vertices is to encode each re-randomisation step of the noisy voter model as instead a step of the (non-noisy) voter model. To this end, we augment the vertex set $V$ with two additional (stubborn) vertices denoted $\textbf{0}$ and $\textbf{1}$, set $\widetilde V=V\cup\{\textbf{0},\textbf{1}\}$, and define a transition matrix $\widetilde P$ on $\widetilde V$ as
\begin{equation}\label{TPxy}
\widetilde P(x,y) = \left\{
        \begin{array}{ll}
            qP(x,y) & \quad x,y \in V, \\
            p/2 & \quad x \in V, y \in \{\textbf{0},\textbf{1}\},\\
            1 & \quad y=x, x \in \{\textbf{0},\textbf{1}\},\\
            0 & \quad \text{otherwise}.
        \end{array}
    \right.
\end{equation}
Observe that the state of all 0s and the state of all 1s are the unique absorbing states of $\widetilde P$.

The voter model with stubborn vertices is the discrete-time model in which each vertex in $\widetilde V$ has an opinion in $\{0,1\}$ and which at each step chooses uniformly a vertex $x$ from $V$ (not $\widetilde V$) and updates the opinion of the chosen vertex to the opinion of vertex $y$ with probability $\widetilde P(x,y)$. Stubborn vertex $\textbf{0}$ (respectively $\textbf{1}$) has opinion 0 (respectively 1) for all time.

Writing $(\xi_t)_{t\ge0}$ for this process of opinions (we take its state space to be $\{0,1\}^V$ but can extend $\xi_t$ to a function on $\widetilde V$ by setting $(\xi_t(\textbf{0}),\xi_t(\textbf{1}))=(0,1)$) we see that $(\xi_t)_{t\ge0}$ is precisely the noisy voter model. The purpose of this new process will become apparent in the next section.

\subsection{The dual process}\label{SS:dual}


Consider the following discrete-time process: initialise by placing a particle on each $x\in V$ and label it with its host vertex. At each time step choose a vertex $x$ from $V$ uniformly and move all particles at $x$ to $y\in\widetilde V$ with probability $\widetilde P(x,y)$ with $\widetilde P$ defined in~\eqref{TPxy}. Note that particles stay together upon meeting.

Observe that when a particle reaches \textbf{0} or \textbf{1} it never moves again, and we say the particle is absorbed. Note that all particles are eventually absorbed. Indeed, consider particle $x$: at each time-step, the vertex containing $x$ is chosen with probability $1/n$, and then it is absorbed into $\{\textbf{0}, \textbf{1}\}$ with probability $p$. Thus, particle $x$ is absorbed in $\Geom(p/n)$ steps, and so all particles are absorbed in finite time almost surely. We define the absorption time $\tau_{\abso}$ as the  time that all particles are absorbed. Formally, denote by $Y_t$ the vector whose entries are the positions of all particles at time $t$, i.e.\! $Y_t(x)$ denotes the  position of particle $x$ at time $t$ (note that $Y_0(x) = x$). The absorption time is
 $$\tau_{\abso} := \min\{t: Y_t(i) \in \{\textbf{0}, \textbf{1}\} \text{ for all $i \in V$}\}.$$
Finally, define the random variable $B(x)$ to be 1 if particle $x$ is absorbed by  $\textbf{1}$, otherwise $B(x) = 0$. For purposes of the analysis, it is convenient to define $B(\textbf{1})$ and $B(\textbf{0})$ as constants taking the values 1 and 0 respectively (i.e. if a particle starts in \textbf{1}, then it is already absorbed by \textbf{1}). 

The relation between the voter model with stubborn vertices $(\xi_t)_{t \geq 0}$, and the coalescing particle system $(Y_t)_{t \geq 0}$ is established in the following result.

\begin{proposition}[Duality]\label{thm:duality}
Let $\sd$ be the stationary distribution of the noisy voter model (or voter model with stubborn vertices). For each $\xi \in \{0,1\}^V$, it holds that
\begin{align}\label{eqn:Bstationary}
\Prob(B(i) = \xi(i), \forall i \in V) = \sd(\xi),
\end{align}
i.e. the distribution of the random vector $(B(i))_{i \in V}$ is the same as the stationary distribution of the noisy voter model (or voter model with stubborn vertices).
\end{proposition}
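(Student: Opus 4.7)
The plan is to construct the voter model with stubborn vertices and the coalescing particle system on a common probability space via a standard graphical construction, and then use a time-reversal duality to identify $(B(x))_{x\in V}$ with the long-time limit of $(\xi_t)_{t\ge0}$.

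First I would produce an i.i.d.\ sequence $(X_t, Z_t)_{t\ge 1}$ with $X_t$ uniform on $V$ and $Z_t$ conditionally distributed as $\widetilde P(X_t,\cdot)$. Fixing an arbitrary initial configuration $\xi_0\in\{0,1\}^V$ (extended by $\xi_0(\textbf{0})=0$, $\xi_0(\textbf{1})=1$), I define the voter model with stubborn vertices via $\xi_t(X_t)=\xi_{t-1}(Z_t)$ and $\xi_t(y)=\xi_{t-1}(y)$ for $y\ne X_t$. For any $T\ge 0$ and $x\in V$, I would then define a backward tracer trajectory $\hat Y^{x}_{0},\hat Y^{x}_{1},\ldots,\hat Y^{x}_{T}$ by $\hat Y^{x}_0=x$ and, for $s=1,\ldots,T$, setting $\hat Y^{x}_s=Z_{T+1-s}$ if $\hat Y^{x}_{s-1}=X_{T+1-s}$ and $\hat Y^{x}_s=\hat Y^{x}_{s-1}$ otherwise. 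A straightforward induction on $T$ (peeling off the most recent update) would then yield the duality identity
\begin{equation*}
\xi_T(x)=\xi_0\bigl(\hat Y^{x}_T\bigr), \qquad x\in V.
\end{equation*}

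Next I would argue that, jointly over $x\in V$, the trajectories $(\hat Y^{x}_\cdot)_{x\in V}$ form a coalescing particle system with the same law as $(Y_t)_{t\ge 0}$. Coalescence is automatic since any two tracers sharing a vertex react identically to every subsequent reversed arrow, and by construction each tracer moves according to $\widetilde P$. Equality in distribution is then immediate because $(X_{T+1-s},Z_{T+1-s})_{s=1}^{T}$ has the same joint law as $(X_s,Z_s)_{s=1}^{T}$ by the i.i.d.\ assumption.

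Finally I would combine these ingredients with the absorption estimate already established before the statement. For any $T\ge\tau_{\abso}$ every tracer $\hat Y^{x}_T$ lies in $\{\textbf{0},\textbf{1}\}$, so the duality identity together with the stubborn-vertex convention gives $\xi_T(x)=\xi_0(\hat Y^{x}_T)=B(x)$; in particular the values of $\xi_0$ on $V$ become irrelevant once $T$ is large. Because $Q$ is irreducible and aperiodic, $\xi_T$ converges in distribution to $\Gamma$ as $T\to\infty$; the coupled equality $(\xi_T(x))_{x\in V}=(B(x))_{x\in V}$ on the almost sure event $\{T\ge\tau_{\abso}\}$ then forces $(B(x))_{x\in V}\sim\Gamma$. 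The step requiring the most care is verifying the duality identity and the distributional equivalence between the reverse-time tracer system and $(Y_t)_{t\ge0}$; once these are in place, the remainder is essentially a convergence-to-stationarity argument.
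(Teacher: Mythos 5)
Your proposal is essentially the same argument as the paper's proof: an i.i.d.\ graphical construction coupling the voter model with stubborn vertices to the reversed-time tracer/coalescing system, an inductive duality identity $\xi_T(x)=\xi_0(\hat Y^x_T)$, a distributional identification of the backward tracers with $(Y_t)$, and a $T\to\infty$ passage combining convergence to $\Gamma$ with almost sure absorption. The only cosmetic difference is that you induct on $T$ by peeling off the last update, while the paper fixes $T$ and inducts on the dual time $t$; the final paragraph's phrase ``coupled equality $(\xi_T(x))=(B(x))$'' is slightly loose (the forward $B$ and your $T$-dependent backward tracers live on different couplings), but the distributional argument you state just before it is the correct and complete version.
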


The proof of Proposition~\ref{thm:duality} is a straightforward application of the graphical representation of the voter model/coalescing random walks; however, for completeness we include a proof.

\begin{proof}

Let $(\xi_t)_{t \geq 0}$ be the voter model with stubborn vertices and $(Y_t)_{t \geq 0}$ the coalescing particle system. Recall that $\xi_t(\mathbf{0}) = 0$ and $\xi_t(\mathbf{1}) = 1$.

For a pair of vertices $(v,w) \in V \times \widetilde V$ define $M_{v,w}: \{0,1\}^{\tilde V} \to \{0,1\}^{\tilde V}$ such that $\xi' =M_{v,w}\xi$ satisfies $\xi'(u) = \xi(u)$ for every $u \in \tilde V \setminus \{v\}$, and 
$$\xi'(v) = 
\begin{cases}
    \xi(w) & \text{ if $w\in V$}\\
    1 &  \text{ if $w = \mathbf{1}$}\\
    0 &  \text{ if $w = \mathbf{0}$}
\end{cases},
$$
in other words, $\xi'$ is the result of vertex $v$ imitating the opinion of vertex $w$ in state $\xi$.

Let $Y$ be a state of the dual process, i.e. $Y \in \widetilde V^V$ where $Y(i)$ indicates the current position of particle $i$. For $(v,w) \in V \times \widetilde V$, define the operator $M'_{v,w}: \widetilde V^V \to  \widetilde V^V$ that moves all particles located in vertex $v$ to vertex $w$. that is, for any particle $i$ such that $Y(i) = v$, then  $Y'=M'_{v,w}Y$ is such that $Y'(i) = w$.

We start by proving that for each $T \geq 0$ and each $\xi, \xi' \in \{0,1\}^V$, it holds that
\begin{align}\label{eqn:dualForm1}
\Prob(\xi_T = \xi'| \xi_0 = \xi) = \Prob(\xi(Y_T) = \xi'),
\end{align}
where $\xi(Y_t)(i)=\xi(Y_t(i))$.
To prove equation~\eqref{eqn:dualForm1} we  couple the voter model and its dual on $t = 0,\ldots, T$. Recall that a coupling $((\xi^c_t, Y^c_t): t \in \{0,\ldots, T\})$ is such that marginally the distributions of $(\xi_t)_{t=0}^T$ and $(\xi^c_t)_{t=0}^T$ are the same and the distributions of $(Y_t)_{t=0}^T$ and $(Y^c_t)_{t=0}^T$ are also the same.


Define a probability measure $m$ on $V \times \widetilde V$ as
\begin{equation}\label{Mvw}
m(v,w) = \frac{\widetilde P(v,w)}{n}.
\end{equation}
Let $\mathbf{u}=((v_t,w_t): t \in \{1,2,\ldots,\})$ be an i.i.d collection of random elements in $V \times \widetilde V$ sampled with distribution $m$. We use $\mathbf{u}$ to couple the processes.

For $t \in \{1,\ldots, T\}$ define $M_t = M_{v_t,w_t}$. Define the $\{0,1\}^V$-valued process $(\xi^c_t: t \in \{0,1,\ldots, T\})$ as
$$ \xi^c_t = \begin{cases}
      \xi, & \text{if }t = 0, \\
      M_t \xi^c_{t-1}, & \text{if } t \in \{1,\ldots, T\} ,
   \end{cases}
   $$
where $\xi \in \{0,1\}^V$ is given. Observe that $(\xi^c_t: t \in \{0,1,\ldots, T\})$ is distributed as the first $T$ rounds $(\xi_t)_{t \geq 0}$ starting with initial opinion $\xi$.

On the other hand, for $t \in \{1,\ldots, T\}$, define $M'_t = M'_{v_{T-t+1}, w_{T-t+1}}$. Define the $\widetilde V^V$-valued process $(Y^c_t: t \in \{0,1,\ldots, T\})$ as
\begin{align}
Y^c_t = \begin{cases}
      \iota, & \text{if }t = 0, \\
      M'_tY^c_{t-1}, & \text{if } t \in \{1,\ldots, T\} ,
   \end{cases}
\end{align}
where $\iota$ is the identity map.
In this case $(Y^c_t: t \in \{0,\ldots, T\})$ is distributed as the first $t$ rounds of the dual coalescing process $(Y_t)_{t \geq 0}$.

The key observation is that for every $i$  and for all $t \in \{0,\ldots, T\}$
\begin{align}\label{eqn:induction1}
\xi^c_{T-t}(Y^c_{t}(i)) = \xi^c_{T}(i), 
\end{align}
 where we assume that $\xi_t(\textbf{0}) = 0$ and $\xi_t(\textbf{1}) = 1$. To prove~\eqref{eqn:induction1} we fix in advance the sequence $\mathbf{u}_T=((v_t,w_t) \in V \times \widetilde V: t \in \{1,\ldots, T\})$ and use induction.

For $t = 0$, since $Y^c_0(i) = i$ we have $\xi^c_{T}(Y^c_{0}(i)) = \xi^c_{T}(i)$. Assume the result holds for some $t <T$. We  prove it holds for $t+1$. Using that $M_{T-t} = M_{v_{T-t}, w_{T-t}}$ from the definition of $\xi^c_{T-t} = M_{T-t} \xi^c_{T-t-1}$, we have that
\begin{align}\label{eqn:dualInd1}
\xi^c_T(i) = \xi^c_{T-t}(Y^c_t(i)) = \begin{cases}
      \xi^c_{T-t-1}(w_{T-t}), & \text{if }Y^c_t(i) = v_{T-t}, \\
     \xi^c_{T-t-1}( Y^c_t(i)) , & \text{if } Y^c_t(i) \neq v_{T-t} .
   \end{cases}
\end{align}
On the other hand, by using that $M'_{t+1} = M'_{v_{T-t}, w_{T-t}}$,
\begin{align}\label{eqn:dualInd2}
Y^c_{t+1}(i) = \begin{cases}
      w_{T-t}, & \text{if }Y^c_t(i) = v_{T-t}, \\
      Y^c_t(i) , & \text{if } Y^c_t(i) \neq v_{T-t} .
   \end{cases}
\end{align}
Therefore, from equations~\eqref{eqn:dualInd1} and~\eqref{eqn:dualInd2} we get that $\xi^c_{T-t-1}(Y^c_{t+1}(i)) = \xi^c_T(i)$ for all $i \in V$, completing the proof of~\eqref{eqn:induction1}.

A particular case of equation~\eqref{eqn:induction1}  for our coupled variables
is that $\xi^c_0(Y^c_T) = \xi^c_T$.
Then, for all $\xi,\xi' \in \{0,1\}^V$ we have
\begin{align}\label{eqn:equalCoupling}
\Prob(\xi_T = \xi'| \xi_0 = \xi) = \Prob(\xi^c_T = \xi'| \xi^c_0 = \xi) = \Prob(\xi(Y^c_T) = \xi') = \Prob(\xi(Y_T) = \xi').
\end{align}
Now that equation~\eqref{eqn:dualForm1} is proved, we can forget about the coupling and work with the original processes. By taking limits in both sides of equation~\eqref{eqn:dualForm1} when $T$ tends to infinity, we can recover equation~\eqref{eqn:Bstationary}. Indeed, on the left-hand side of equation~\eqref{eqn:dualForm1} we have that
\begin{align}\label{eqn:dualForm2}
\lim_{T \to \infty}\Prob(\xi_T = \xi'| \xi_0 = \xi) = \sd(\xi')
\end{align}
since the stationary distribution of $\xi_T$ is $\sd$. On the other hand, since $\lim_{T \to \infty}\Prob(\tau_{\abso}\leq T) = 1$, by taking limit on the right-hand side of equation~\eqref{eqn:dualForm1} we have\belowdisplayskip=-12pt
\begin{align*}
\lim_{T \to \infty} \Prob(\xi(Y_T) = \xi') &= \lim_{T \to \infty}\Prob(\xi(Y_T) = \xi', \tau_{\abso} \leq T)\\
&=\lim_{T \to \infty}\Prob(\xi(Y_{\tau_{\abso}}) = \xi', \tau_{\abso} \leq T)\\
&=\Prob(\xi(Y_{\tau_{\abso}}) = \xi')\\
 &= \Prob(B(i) = \xi'(i), \forall i). 
\end{align*}
\end{proof}

\begin{remark}[Dual time relation]\label{remark:continuoustimedual}
Since the speed of the dual process is not relevant (in the sequel we are only concerned with the orderings of meeting and absorption times), sometimes it will be convenient for us to consider the continuous-time version of the process defined as follows.  Each vertex $x \in V$ is provided with an independent exponential clock of rate $1/(1-p)$. Each time the clock of $x \in V$ rings, it moves all its particles, if any, using the transition matrix $\widetilde P$. Note that with probability $\widetilde P(x,x)$ the particles do not move. Thus, a vertex rings and any present particles jump at rate $1$ (deciding where to move by using $P$), and at rate $p/(1-p)$ any particles on a particular vertex are absorbed into one of the stubborn states.

The continuous-time version is slightly more convenient as particles are independent until they meet or  are absorbed by  one of the stubborn vertices.
 
We finally note that moving between continuous and discrete-time does not affect the validity of Proposition~\ref{thm:duality}.
\end{remark}


\section{Basic properties of $\NVM(V,P,p)$}\label{sec:properties}

In this section we prove some basic properties of $\NVM(V,P,p)$, in particular properties of the random variable $S= \sum_{x \in V} \pi(x)\xi(x)$ where $\xi$ is sampled from the stationary distribution $\sd$ of $\NVM(V,P,p)$, and $\pi$ is the stationary distribution of $P$. These properties feature in the proofs of the results presented in Section~\ref{sec:Results}.
In all cases we assume $p \in (0,1)$ to avoid trivial cases.

Associated with the dual process of coalescing random walks we define the following random variables. Under the standard convention that $\inf\{\varnothing\} = \infty$, we define the absorption time of particle $i$ as $\tau^{Y(i)} = \inf\{t \geq 0: Y_t(i) \in \{\textbf{0}, \textbf{1}\}\}$, i.e. $\tau^{Y(i)}$ is the hitting time of the set $\{\textbf{0}, \textbf{1}\}$. Also, for $i,j \in V$ define $\tau^{Y(i),Y(j)} = \inf\{\tau^{Y(i)}, \tau^{Y(j)}\}$.  We write $M^{Y(i),Y(j)} = \inf\{t \geq 0: Y_t(i) = Y_t(j) \in V \}$, i.e $M^{Y(i),Y(j)}$ is the meeting time of particles $i$ and $j$. Notice that we are only considering meetings that happen in vertices of $V$ and not in  the stubborn vertices $\textbf{0}$ nor $\textbf{1}$. Hence, $M^{Y(i),Y(j)}$ can be infinity, even if both particles are absorbed by the same vertex. For $i,j \in V$ define the event $E_{ij} = \{M^{Y(i),Y(j)}<\tau^{Y(i),Y(j)}\}=\{M^{Y(i),Y(j)}<\infty\}$. For convenience we define $M^{Y(i),Y(i)} = 0$, and thus $\Prob(E_{ii}=1).$

Finally, we will frequently couple the dual process with an independent random walks process. Given an integer $m\geq 1$, we say that $(X_t(1),\ldots, X_t(m))_{t\geq 0}$ is a $\RW(m,P)$ process if all the random walks $X(i)$ are independent, and they move according to the transition matrix $P$.  We will allow time to be continuous or discrete (which shall be made clear when we use the process); in the case of continuous time the walks jump at rate 1, unless stated otherwise. It follows from independence that the particles in $\RW(m,P)$ do not coalesce upon meeting and there is no absorption (in particular they always move in $V$).  Given $\boldsymbol{x}\in V^m$ (or a measure $q\in V^m$), we use $\Prob^{\RW}_{\boldsymbol{x}}$ ($\Prob^{\RW}_{q}$) to denote the measure with respect to $\RW(m,P)$ when the walks start from $\boldsymbol{x}$ (from $q$). Finally, we write $\RW$ or $\RW(m)$ when $m$ and $P$ are clear from the context. We also write $M^{X_i, X_j}$ to denote the meeting time of the walk $X_i$ and $X_j$ in $\RW(m,P)$.

\begin{proposition}\label{lemma:expectedS} In $\NVM(V,P,p)$ we have
$\E(S) = 1/2.$
\end{proposition}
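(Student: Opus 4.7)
The plan is to use the duality result from Proposition~\ref{thm:duality} to reduce the computation of $\mathbb{E}(S)$ to a symmetry argument about absorption in the dual process.

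By Proposition~\ref{thm:duality}, the distribution of $\xi \sim \Gamma$ agrees with that of the vector $(B(i))_{i\in V}$, where $B(i) = \ind_{\{\text{particle } i \text{ is absorbed by } \mathbf{1}\}}$ in the coalescing particle system. So I would first write
\[
\E(S) \;=\; \sum_{x \in V}\pi(x)\,\E(\xi(x)) \;=\; \sum_{x \in V}\pi(x)\,\Prob(B(x)=1).
\]
The key step is then to show that $\Prob(B(x)=1) = 1/2$ for every $x \in V$. This follows from an obvious symmetry in the dual: from any $x \in V$, the transition matrix $\widetilde P$ defined in~\eqref{TPxy} satisfies $\widetilde P(x,\mathbf{0}) = \widetilde P(x,\mathbf{1}) = p/2$, and apart from these entries, $\widetilde P$ does not distinguish between the two stubborn states. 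Formally, I would introduce the measure-preserving swap $\sigma$ on $\widetilde V$ that exchanges $\mathbf{0}$ and $\mathbf{1}$ and leaves $V$ fixed; $\sigma$ leaves $\widetilde P$ invariant, and hence the law of the coalescing system $(Y_t)_{t\ge 0}$ is invariant under $\sigma$. Applying $\sigma$ to the event $\{B(x)=1\}$ gives $\{B(x)=0\}$, and so these two events have equal probability, forcing $\Prob(B(x)=1) = 1/2$.

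Plugging this back in and using that $\pi$ is a probability distribution on $V$, I obtain
\[
\E(S) \;=\; \sum_{x \in V} \pi(x)\cdot \tfrac12 \;=\; \tfrac12,
\]
as required. There is no real obstacle here; the only point that needs to be made carefully is the symmetry between $\mathbf{0}$ and $\mathbf{1}$ in $\widetilde P$, which is just a direct inspection of~\eqref{TPxy}. Note that the proof makes no use of the specific form of $\pi$; the same argument in fact shows $\E(\xi(x)) = 1/2$ for every $x \in V$, so any convex combination of the $\xi(x)$'s would have expectation $1/2$.
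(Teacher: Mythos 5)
Your proof is correct and follows essentially the same route as the paper's: invoke duality (Proposition~\ref{thm:duality}) to identify $\E(\xi(x))$ with $\Prob(B(x)=1)$, then argue by the $\mathbf{0}\leftrightarrow\mathbf{1}$ symmetry of $\widetilde P$ that this probability is $1/2$. Your explicit formulation of the symmetry as a measure-preserving swap is a slightly more careful version of the paper's one-line appeal to "absorbed by either $\mathbf{0}$ or $\mathbf{1}$ with equal probability," but the argument is the same.
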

\begin{proof}
By Proposition~\ref{thm:duality}, we know that $(B(i))_{i \in V}$ and $\xi$ have the same distribution, and so the same marginal distributions. In particular, for all $x \in V$, it holds that $\Prob(\xi(x) = 1) = \Prob(B(x) = 1)$. Moreover, note that $\Prob(B(x) = 1) = 1/2$ since in the dual process, each particle is absorbed by either \textbf{0} or \textbf{1} with equal probability. (Equation \eqref{TPxy} gives the probability that a particle at $v$ transitions to $w$  in a given step.)
Therefore
\[ \E(S) = \sum_{x \in V} \pi(x) \E(\xi(x)) = \sum_{x \in V} \pi(x) \Prob(\xi(x) = 1) = 1/2 .\qedhere\]
\end{proof}

\begin{lemma}\label{lemma:sigma2}
 Consider $\NVM(V,P,p)$, and let $\sigma^2 = \var(S)$. Then
\[\sigma^2 = \frac{1}{4}\sum_{x \in V} \sum_{y \in V} \pi(x)\pi(y)\Prob(E_{xy})
\]
\end{lemma}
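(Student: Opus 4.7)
The plan is to exploit the duality established in Proposition~\ref{thm:duality}. Since $(\xi(x))_{x\in V}$ is distributed as the random vector $(B(x))_{x\in V}$ of absorption indicators, we have
\[
\sigma^2=\var(S)=\sum_{x,y\in V}\pi(x)\pi(y)\cov(B(x),B(y)),
\]
and using $\E(B(x))=1/2$ (already established in the proof of Proposition~\ref{lemma:expectedS}) it suffices to prove the pointwise identity
\[
\cov(B(x),B(y))=\tfrac14\Prob(E_{xy}).
\]

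To establish this identity I would condition on whether $E_{xy}$ occurs, i.e.\ whether particles $x$ and $y$ meet (in $V$) before either is absorbed. On $E_{xy}$, once the particles meet they move together and must therefore be absorbed at the same stubborn vertex; by the symmetry of~\eqref{TPxy}, which assigns equal probability $p/2$ to transitions into $\mathbf{0}$ and into $\mathbf{1}$, this common absorption vertex is $\mathbf{1}$ with probability $1/2$. Hence $\Prob(B(x)=B(y)=1\mid E_{xy})=1/2$. On the complement $E_{xy}^{c}$, particles $x$ and $y$ are never simultaneously at the same vertex of $V$, so they are absorbed at two distinct update steps; at each such step the choice between $\mathbf{0}$ and $\mathbf{1}$ is an independent fair coin (independent of all the past of the graphical construction), and consequently the two absorption destinations are independent and uniform on $\{\mathbf{0},\mathbf{1}\}$, giving $\Prob(B(x)=B(y)=1\mid E_{xy}^{c})=1/4$.

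Combining the two cases,
\[
\Prob(B(x)=B(y)=1)=\tfrac12\Prob(E_{xy})+\tfrac14\Prob(E_{xy}^{c})=\tfrac14+\tfrac14\Prob(E_{xy}),
\]
so $\cov(B(x),B(y))=\Prob(E_{xy})/4$. Summing over $x,y\in V$ with weights $\pi(x)\pi(y)$ yields the stated formula. Note that the diagonal terms are handled consistently by the convention $M^{Y(x),Y(x)}=0$, which gives $\Prob(E_{xx})=1$ and recovers $\var(B(x))=1/4$.

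The only step requiring care is the claim that, conditioned on $E_{xy}^{c}$, the two absorption coins are independent. I would justify this by appealing to the graphical construction used in the proof of Proposition~\ref{thm:duality}: at each update, the randomness picking between $\mathbf{0}$ and $\mathbf{1}$ is a fresh fair coin, independent of everything else; on $E_{xy}^{c}$ the two absorptions use disjoint sets of such coins, so the destinations are independent. Everything else is elementary conditioning and summation.
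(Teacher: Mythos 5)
Your proof is correct and follows essentially the same route as the paper: invoke the duality of Proposition~\ref{thm:duality} to replace $\xi$ by the absorption vector $(B(x))_{x\in V}$, then compute $\cov(B(x),B(y))$ by conditioning on the meeting event $E_{xy}$, obtaining $\E(B(x)B(y)\mid E_{xy})=1/2$ and $\E(B(x)B(y)\mid E_{xy}^c)=1/4$. The only difference is cosmetic: you spell out the independence of the two absorption coin flips on $E_{xy}^c$ via the graphical construction, whereas the paper states it directly, and you explicitly check the diagonal convention $\Prob(E_{xx})=1$; both of these are consistent with what the paper uses.
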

\begin{proof}
Recall that
$$\sigma^2 = \var\left(\sum_{x \in V} \pi(x)\xi(x)\right) = \sum_{x\in V}\sum_{y\in V} \pi(x)\pi(y)\Cov(\xi(x),\xi(y)).$$
By Proposition~\ref{thm:duality}, the vector $(B(x))_{x \in V}$ has the same  stationary distribution as $\xi$. Thus
$$\sigma^2 = \sum_{x \in V} \sum_{y \in V} \pi(x)\pi(y)\cov(B(x),B(y)).$$
From the definition of covariance,
\begin{align}
\cov(B(x),B(y)) = \E(B(x)B(y))-\E(B(x))\E(B(y)) = \E(B(x)B(y))-(1/4).
\end{align}
Given the event $E_{xy} = \{M^{Y(x),Y(y)} < \tau^{Y(x),Y(y)}\}$, observe that $\E(B(x)B(y)| E_{xy}) = 1/2$: if both particles meet before either one is absorbed,  they  move together and  their final state is the same. They are absorbed by \textbf 1 with probability $1/2$, and by \textbf{0} otherwise. On the other hand, $\E(B(x)B(y)|E_{xy}^c) = 1/4$ since both particles are absorbed at different times and thus their final states are independent given $E_{xy}^c$.
Hence
\[\cov(B(x),B(y)) = (1/2)\Prob(E_{xy}) + (1/4)\Prob(E_{xy}^c)-1/4 = (1/4)\Prob(E_{xy}).\qedhere\]
\end{proof}

\begin{proof}[Proof of Lemma \ref{lemma:easyVar1}]
We begin with the first item. Since $\Prob(E_{xx}) = 1$ for all $x \in V$, then  Lemma~\ref{lemma:sigma2} yields 
\[\sigma^2=  \var(S) \geq (1/4)\sum_{x \in V} \pi(x)^2 \Prob(E_{xx}) = (1/4)\sum_{x \in V} \pi(x)^2 =\nu^2/4.\]
For the second item,  as was discussed previously $\Prob(E_{xy})$ has the same value if we consider discrete or continuous time since this only affects the speed of the process. For this proof it is more convenient to work in continuous time for which particles in the dual move at rate $1$ and are absorbed by the stubborn vertices at rate $\delta:= p/(1-p)$. Note that up to time $M^{Y(x) ,Y(y)} \wedge \tau^{Y(x),Y(y)}$, the particles $Y(x),Y(y)$ move like independent continuous-time random walks, and the time $\tau^{Y(x),Y(y)}$ is independent of $Y(x),Y(y)$ and is exponentially distributed with rate $2\delta$. Hence, we can naturally couple $(Y(x),Y(y))$ with a $\RW(2,P)$ process $(X_1,X_2)$ up to time $M^{Y(x) ,Y(y)} \wedge \tau^{Y(x),Y(y)}$. Thus
\begin{align}\label{eqn:above}1-\Prob(E_{x,y})=\Prob(M^{Y(x),Y(y)}>\tau^{Y(x),Y(y)}) = \Prob^{\RW(2)}_{(x,y)}(M^{X_1,X_2}>Z)\end{align}
where $M^{X_1,X_2}$ denotes the meeting time of the independent random walks $X_1,X_2$ and $Z$ is an independent exponential random variable of rate $2\delta$. 

By multiplying \eqref{eqn:above} by $\pi(x)\pi(y)$ and summing over all $x,y\in V$,  Lemma~\ref{lemma:sigma2} yields
\begin{align}\label{eqn:random948g424s}
    1-4\sigma^2 = \Prob_{\pi^2}^{\RW}(M^{X_1,X_2}>Z) = \int_0^{\infty} \Prob_{\pi^2}^{\RW}(M^{X_1,X_2}>t)2\delta e^{-2\delta t}dt,
\end{align}

Recall that $X_1$ and $X_2$ move on state space $V$ at rate $1$ according to the transition matrix $P$. Then from \cite[Lemma 1.7]{olive}, we have  $$\Prob_{\pi^2}(M^{X_1,X_2}>t)\leq e^{-t/t_{\hit}},$$
where we recall that $t_{\hit} = \max_{x,y\in V} \E_{x}(T_y)$, with $T_y$ is the hitting time of vertex $y\in V$. Therefore,
\begin{align*}
\int_0^{\infty} \Prob_{\pi^2}^{\RW}(M^{X_1,X_2}>t)2\delta e^{-2\delta t}dt &\leq  2\delta\int_0^{\infty} \exp\left(-t\left(\frac{1}{t_{\hit}}+2\delta \right)\right)dt\\& = \frac{2\delta}{\frac{1}{t_{\hit}}+2\delta} = \frac{2\delta t_{\hit}}{1+2\delta t_{\hit}} .
\end{align*}
By plugging this inequality  into~\eqref{eqn:random948g424s} we obtain
\begin{align*}
    4\sigma^2 \geq  1- \frac{2\delta t_{\hit}}{1+2\delta t_{\hit}} = \frac{1}{1+2\delta t_{\hit}}.
\end{align*} 
Note that $t_{\hit}$ is the same in either continuous or discrete time, so the identical bound holds in either setting. Using the previous inequality and the assumption $p\leq 1/2$, we obtain
\[
    4\sigma^2 \geq   \frac{1}{1+2\delta t_{\hit}} \geq \frac{1}{1+4p t_{\hit}}.\qedhere
\]
\end{proof}

 Using a similar argument we next prove Proposition~\ref{prop:small_p}.

\begin{proof}[Proof of Proposition~\ref{prop:small_p}]
We shall show that $\sigma_n^2 \to 1/4$, and so, since the Bernoulli random variable of parameter $1/2$ is the only distribution taking values in $[0,1]$ with variance $1/4$, we will have proven the result.

Following the same argument of the proof of Lemma~\ref{lemma:easyVar1} we just have to prove that $$\Prob_{\pi_n^2}^{\RW(2)}(M^{X_1,X_2}>Z)\to 0,$$ where $(X_1,X_2)$ is a continuous-time $\RW(2,P_n)$ process on $V_n$ moving at rate $1$ according to $P_n$, and $Z$ is an exponential random variable of rate $2p_n/(1-p_n)$.

In the following computations we omit the subscript $n$ to ease notation. Let $t_{\meet} = \E_{\pi^2}^{\RW(2)}(M^{X_1,X_2})$, and $\delta = p/(1-p)$, then

\begin{align}
\notag \Prob_{\pi^2}^{\RW(2)}(M^{X_1,X_2}>Z) &= \int_0^{\infty} \Prob_{\pi^2}(M^{Y(x),Y(y)}>t)2\delta e^{-2\delta t}dt\\
&\leq (1- e^{-2\delta t_{\meet}})+t_{\meet}\int_{t_{\meet}}^{\infty}\frac{2\delta e^{-2\delta t}}{t}dt.\label{eqn:randome1928j9bxcb}
\end{align}
Now,
\begin{align*}
    \int_{t_{\meet}}^{\infty}\frac{2\delta e^{-2\delta t}}{t} dt = \sum_{i=1}^{\infty} \int_{it_{\meet}}^{(i+1)t_{\meet}}\frac{2\delta e^{-2\delta t}}{t}dt \leq 2\delta \sum_{i=1}^{\infty}\frac{e^{-2\delta it_{\meet}}}{i}.
\end{align*}
Observe that $\sum_{i=1}^{\infty}e^{-ai}/i = -\log(1-e^{-a})$, and that $e^{-a} \leq 1/(1+a)$ for $|a|<1$. Thus for all $n$ sufficiently large (so that $\delta t_{\meet} <1$), we obtain
\begin{align*}
    \int_{t_{\meet}}^{\infty}\frac{2\delta e^{-2\delta t}}{t}dt\leq  2\delta \log(1+1/(2\delta t_{\meet})).
\end{align*}
We now use that $e^{-x}\le 1-x+x^2/2$ for $x>0$, to give that for $0<x<1$, $1\le \frac{1-e^{-x}}{x}\frac1{1-x/2}$, i.e.\! for $n$ sufficiently large,
\[
1\le \frac{1-e^{-2\delta t_{\meet}}}{2\delta t_{\meet}}\frac1{1-\delta t_{\meet}}.
\]
We thus obtain 
\begin{align*}
    \int_{t_{\meet}}^{\infty}\frac{2\delta e^{-2\delta t}}{t}dt\leq  \frac{1-e^{-2\delta t_{\meet}}}{ t_{\meet}}\frac1{1-\delta t_{\meet}} \log(1+1/(2\delta t_{\meet})),
\end{align*}
and plugging this into \eqref{eqn:randome1928j9bxcb} yields
\begin{align}
   \Prob_{\pi_n^2}^{\RW(2)}(M^{X_1,X_2}>Z) \leq (1- e^{-2\delta t_{\meet}})\left( 1+\frac{\log\left(1+1/(2\delta t_{\meet})\right)}{1-\delta t_{\meet}} \right).
\end{align}

The result follows since $\lim_{x\to 0} (1-e^{-x})(1+\log(1+1/x)/(1-x/2)) = 0$ and $\delta t_{\meet} \to 0$ by assumption.
\end{proof}
\begin{proof}[Proof of Corollary~\ref{cor:normalApprox}]
We will prove that \eqref{eqn:easyCondition1} implies \eqref{eqn:hardcondition2}. We consider two cases: first, suppose that $\big(\frac{\pi^*_n}{\sigma_n}\big)^2n\ge 1$, then, by using the first item of Lemma~\ref{lemma:easyVar1} in equation \eqref{eqn:hardcondition1} we have
\[\frac{n}{p_n}\left(\frac{\pi^*_n}{\sigma_n} \right)^3 + \frac{\pi^*_n}{\sigma_n p_n} =
\frac{\pi^*_n}{\sigma_n p_n}\left(\Big(\frac{\pi^*_n}{\sigma_n}\Big)^2n+1\right)\le 2\left(\frac{\pi^*_n}{\sigma_n}\right)^3\frac{n}{p_n}\le 2\left(\frac{2\pi^*_n}{\nu_n}\right)^3\frac{n}{p_n}=o(1).
\]

Secondly, consider $\big(\frac{\pi^*_n}{\sigma_n}\big)^2n< 1$, thus
\[\frac {n}{p_n}\left(\frac{\pi^*_n}{\sigma_n} \right)^3 + \frac{\pi^*_n}{\sigma_n p_n}  = 
\frac{\pi^*_n}{\sigma_n p_n}\left(\Big(\frac{\pi^*_n}{\sigma_n}\Big)^2n+1\right)\le 2\frac{\pi^*_n}{\sigma_n p_n}<\frac{2}{\sqrt n p_n}=o(1),
\]
since $\sqrt{n}p_n\to \infty$ which is implicit in  statement~\eqref{eqn:easyCondition1} as $(\pi^*_n/\nu_n)^2\geq n^{-1}$.
\end{proof}
\begin{proof}[Proof of Proposition~\ref{prop:regularhitn}]
Recall that $p_n$ is non-increasing, hence for the first item, we divide into the cases that eventually $p_n\leq 1/2$, and $p_n>1/2$. If $p_n>1/2$ we apply directly Corollary~\ref{cor:normalApprox} since $\nu_n=\Theta(1/\sqrt{n})$ and $\pi^*_n = O(1/n)$.  For the case $p_n\leq 1/2$, by Lemma~\ref{lemma:easyVar1} we have that $\sigma_n = \Omega\left(\frac{1}{np_n} \right)$, which replaced on the left-hand side of equation~\eqref{eqn:hardcondition2} yields
\begin{align*}
O\left(\sqrt{p_n/n}+ 1/\sqrt{np_n}\right) = o(1),
\end{align*} 
and so Theorem~\ref{thm:normalApprox} yields the result.

For the second item, by \cite[Proposition 14.5]{aldousfill2014} we have that $\E(M_n) \leq t_{\hit} = O(n)$ where $M_n$ is the meeting time of two independent continuous-time random walks where the initial state is sampled according to $\pi$. The result follows then from Proposition~\ref{prop:small_p}.
\end{proof}

The next result will be useful to prove the Gaussian approximation of the random variable $W$. Suppose we consider  $\NVM(V,P,p)$ with $|V|=n$ starting from $\xi_0 = \xi\sim \sd$. Denote $\xi'= \xi_1$. Recall the definition $W = (S-1/2)/\sigma$, where $S = \sum_{x\in V} \pi(x)\xi(x)$, and similarly define $W'$ (replacing $\xi$ with $\xi'$). Since $\xi\sim \sd$ we have $\var(S) = \var(S')=\sigma^2$ and $\E(S)=\E(S') = 1/2$.
The following lemma relates $W'$ with $W$.

\begin{lemma}\label{lemma:lambda} 
$\E(W'|W) = \left(1-\frac{p}{n}\right)W.$
\end{lemma}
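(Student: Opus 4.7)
The plan is to prove the stronger statement that $\E(S'\mid \xi_0=\xi) = (1-p/n)S + p/(2n)$, and then deduce the claimed identity by the tower property: since $W$ is a deterministic function of $\xi$, we have $\E(W'\mid W)=\E(\E(W'\mid \xi)\mid W)$.

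To compute $\E(S'\mid \xi_0=\xi)$, I would condition on the one-step dynamics described in the introduction. For each fixed vertex $x\in V$, with probability $1-1/n$ vertex $x$ is not chosen and $\xi_1(x)=\xi(x)$; with probability $1/n$, vertex $x$ is selected and then, with probability $p$ its opinion is re-randomised (giving $\E(\xi_1(x)\mid x\text{ chosen, re-randomise})=1/2$), and with probability $q=1-p$ it copies vertex $y$ with probability $P(x,y)$. Collecting terms gives
\begin{align*}
\E(\xi_1(x)\mid \xi) = \Big(1-\tfrac{1}{n}\Big)\xi(x) + \tfrac{1}{n}\Big(\tfrac{p}{2} + q\sum_{y\in V} P(x,y)\xi(y)\Big).
\end{align*}

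Now I multiply by $\pi(x)$ and sum over $x\in V$. The contribution from the $\xi(x)$ term is $(1-1/n)S$; the contribution from the constant term is $p/(2n)$ since $\sum_x\pi(x)=1$; and the double sum is
\begin{align*}
\frac{q}{n}\sum_{y\in V}\xi(y)\sum_{x\in V}\pi(x)P(x,y) = \frac{q}{n}\sum_{y\in V}\pi(y)\xi(y) = \frac{q}{n}S,
\end{align*}
where I used that $\pi$ is stationary for $P$ (i.e.\ $\pi P = \pi$). Adding these three pieces yields $\E(S'\mid \xi) = \bigl(1 - \tfrac{1}{n} + \tfrac{q}{n}\bigr)S + \tfrac{p}{2n} = \bigl(1-\tfrac{p}{n}\bigr)S + \tfrac{p}{2n}$.

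Subtracting $1/2$ from both sides and noting that $\tfrac{p}{2n} - \tfrac{1}{2} = -\tfrac{1}{2}(1-\tfrac{p}{n})$, I obtain $\E(S' - 1/2\mid \xi) = (1-p/n)(S-1/2)$, so $\E(W'\mid \xi) = (1-p/n)W$. Since $W$ is $\sigma(\xi)$-measurable, the tower property gives $\E(W'\mid W)=(1-p/n)W$. I do not expect a genuine obstacle here; the only subtlety is remembering to invoke $\pi P = \pi$ (which follows from the standing assumption that $P$ is reversible and has stationary distribution $\pi$).
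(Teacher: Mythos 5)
Your proof is correct and follows essentially the same route as the paper: compute $\E(\xi'(x)\mid\xi)$ by conditioning on the one-step dynamics, multiply by $\pi(x)$, sum, and then standardise. The only difference is in handling the key identity $\sum_{x}\pi(x)P(x,A)=\pi(A)$: the paper proves it via the edge-flow balance $Q(A,A^c)=Q(A^c,A)$ for the stationary measure, whereas you invoke $\pi P=\pi$ directly, which is the more elementary and transparent justification (and, as you might note, requires only stationarity, not reversibility).
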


\begin{proof}
Set $A = \{x \in V: \xi(x) = 1\}$ and let $P(x,A) = \sum_{y \in A} P(x,y)$. Then, for $x\in V$, we have
\begin{align}\nonumber
\E(\xi'(x)|\xi)&= \frac{(n-1)}{n}\xi(x) + \frac{1}{n}\left(\frac{p}{2}+qP(x, A)\right).
\end{align}
To see this consider the following argument. With probability $1/n$, vertex $x$ is chosen to update its opinion. If this happens, then with probability $p$ this update is a re-randomisation and further with probability 1/2 its opinion becomes $1$. Otherwise, with probability $q=1-p$, a standard voting step is performed, and thus $x$ samples a vertex with opinion 1 with probability $P(x,A)$.
By multiplying by $\pi(x)$ and  summing over $V$, we have
\begin{align}\label{eqn:partialESn0'}
\E(S'|\xi) &= \frac{n-1}{n}S+ \frac{p}{2n}  +\frac{q}{n}\sum_{x \in V} \pi(x)P(x,A).
\end{align}
Denote $Q(A,B) = \sum_{x \in A} \pi(x)P(x,B)$. For any irreducible transition matrix $P$, it holds that $Q(A,A^c) = Q(A^c, A)$ \cite[Exercise 7.2]{levin2009markov}. Then $Q(V,A) = Q(A,A)+Q(A^c, A) = Q(A,A)+Q(A,A^c) = Q(A,V) = \pi(A)$. Thus
\begin{align}
\sum_{x \in V} \pi(x)P(x,A)&= \pi(A)
=\sum_{x \in V} \pi(x) \xi(x) = S .\nonumber
\end{align}
Plugging into equation~\eqref{eqn:partialESn0'} we obtain
\begin{align}\label{eqn:partialESn0}
\E(S'|\xi) &= S\left(1-\frac{p}{n}\right) + \frac{p}{2n}.\nonumber
\end{align}
Observe that since $S$ is a function of $\xi$ we have $\E(\E(S'|\xi)|S) = \E(S'|S)$. We conclude
\begin{align}
\E(S'|S) - \frac{1}{2} &=
\left(S-\frac{1}{2}\right)\left(1-\frac{p}{n}\right). \nonumber
\end{align}
Dividing by $\sigma = \sqrt{\Var(S)}$ on both sides completes the proof.
\end{proof}

\section{Gaussian approximation - proof of Theorem~\ref{thm:normalApprox}}\label{sec:normalApprox}
\subsection{Proof of equation~\eqref{eqn:hardcondition1}}\label{sec:proofofhard1}
In this section we prove equation~\eqref{eqn:hardcondition1} of Theorem~\ref{thm:normalApprox}. For that, we combine our previous results with Stein's method, leading not only to the asymptotic normality of sequences of $\NVM$ processes, but also to error rates. For such, we will employ a result from R\"ollin \cite{rollin2008note} (Theorem~\ref{thm:rollin} in the Appendix) that gives an upper bound for the Kolmogorov distance between the random variable $W$ and a standard real-valued Gaussian random variable. R\"ollin simplified the conditions of the so-called `exchangeable pair' method developed in \cite{Rinott1997} which has been used to study the antivoter model.

Recall the definition of $\Psi$ from Theorem~\ref{thm:normalApprox} as \[
\Psi:=\sum_{x\in V}\sum_{y\in V}\mu(x,y)\ind{\{\xi(x)\neq \xi(y)\}}
.\]


\begin{proposition}
Consider a noisy voter model $\NVM(V,P,p)$ where $|V|=n$. Let $\Phi$ be the cumulative distribution function of a standard normal random variable. There exists a universal constant $C>0$ such that
\begin{align}
    \sup_{t\in\mathbb{R}}|\Prob(W\leq t) - \Phi(t)| \leq C\Big\{\Big(\frac{\pi^*}{\sigma}\Big)^3 \frac{n}{p}+ \Big(\frac{\pi^*}{\sigma}\Big)^2\sqrt{\frac{n}{p}}+\frac{\nu^2}{p\sigma^2}\sqrt{\Var(\Psi)} \Big\}.
\end{align}
\end{proposition}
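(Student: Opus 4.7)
The plan is to apply the Stein's method bound of Theorem~\ref{thm:rollin} (due to R\"ollin) to the pair $(W,W')$ obtained by running one step of the noisy voter chain starting from its stationary distribution $\Gamma$. That theorem controls the Kolmogorov distance between $W$ and a standard Gaussian via three inputs: the regression coefficient $\lambda$ in the identity $\E(W'\mid W)=(1-\lambda)W$; the pointwise or moment size of the increment $W'-W$; and the fluctuations of the conditional second moment $\E[(W'-W)^2\mid \sigma(W)]$. These three inputs will account for the three summands on the right-hand side of~\eqref{eqn:hardcondition1}.

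Lemma~\ref{lemma:lambda} immediately supplies $\lambda=p/n$. For the increment, the noisy voter chain updates at most one vertex per step, so $|S'-S|\le \pi^*$ pointwise and hence $|W'-W|\le \pi^*/\sigma$; in particular $\E|W'-W|^3\le (\pi^*/\sigma)^3$ and $\E(W'-W)^4\le (\pi^*/\sigma)^4$. Plugging these into the terms $\lambda^{-1}\E|W'-W|^3$ and $\lambda^{-1/2}\sqrt{\E(W'-W)^4}$ of R\"ollin's bound produces the first two summands $(\pi^*/\sigma)^3 n/p$ and $(\pi^*/\sigma)^2\sqrt{n/p}$ respectively.

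The remaining and slightly more substantive step is to compute $\E[(S'-S)^2\mid \xi]$ and identify $\Psi$ inside it. Using~\eqref{eqn:defQ} one checks that, regardless of whether $\xi(x)=0$ or $\xi(x)=1$, the conditional probability that vertex $x$ is chosen and subsequently flipped equals
\[
\frac{1}{n}\Bigl\{\frac{p}{2}+q\sum_{y\in V}P(x,y)\ind_{\xi(y)\neq\xi(x)}\Bigr\}.
\]
Since $(S'-S)^2=\pi(x)^2$ on the event that vertex $x$ flips (and zero otherwise), summing over $x$ weighted by $\pi(x)^2$ yields
\[
\E[(S'-S)^2\mid \xi]=\frac{p\nu^2}{2n}+\frac{q\nu^2}{n}\Psi.
\]
Since $\sigma(W)\subset\sigma(\xi)$, conditional Jensen gives $\Var(\E[(W'-W)^2\mid W])\le\Var(\E[(W'-W)^2\mid\xi])=\bigl(q\nu^2/(n\sigma^2)\bigr)^2\Var(\Psi)$. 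Dividing $\sqrt{\Var(\E[(W'-W)^2\mid W])}$ by $\lambda=p/n$ and using $q\le 1$ recovers the third summand $\nu^2/(p\sigma^2)\sqrt{\Var(\Psi)}$.

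The main obstacle is honestly only the clean computation of $\E[(S'-S)^2\mid\xi]$ and the observation that its $\xi$-dependent part is exactly $\Psi$ up to the factor $q\nu^2/n$; everything else is either direct bookkeeping or an immediate consequence of the one-vertex-per-step structure of the chain together with Lemma~\ref{lemma:lambda}. Checking that $(\xi,\xi')$ furnishes a valid input to Theorem~\ref{thm:rollin} and absorbing harmless numerical factors into the universal constant $C$ closes the argument.
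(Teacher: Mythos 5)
Your argument is correct and follows essentially the same route as the paper: apply R\"ollin's exchangeable-pair theorem (Theorem~\ref{thm:rollin}) with $\lambda=p/n$ from Lemma~\ref{lemma:lambda} and the pointwise bound $|W'-W|\le\pi^*/\sigma$, compute $\E[(S'-S)^2\mid\xi]=\frac{p\nu^2}{2n}+\frac{q\nu^2}{n}\Psi$, and use $\var(\E[\cdot\mid W])\le\var(\E[\cdot\mid\xi])$ together with $q\le1$ to extract the $\Psi$-term. The only cosmetic difference is that you pass through third/fourth moment bounds on $W'-W$ before invoking the theorem, whereas the paper's quoted form of R\"ollin's bound uses the almost-sure bound $A$ directly; since your moment bounds follow from the pointwise bound, the two are equivalent.
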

\begin{proof}
Consider the random vector of opinions $\xi$ sampled from $\Gamma$, and let $\xi'$ be the vector $\xi$ after one iteration of the noisy voter model from $\xi$. We consider $S = \sum_{x} \pi(x)\xi(x)$ and $S' = \sum_x \pi(x)\xi'(x)$, and denote by $W$ and $W'$ the corresponding standardised versions. 

Note that $\E(W)= 0$ and $\var(W)=1$, and that $\xi'$ has the same distribution as $\xi$ since $\xi$ is sampled from the stationary distribution (so $W'$ has the same mean and variance as $W$). Moreover, note that $|W'-W|\leq \pi^*/\sigma$ since only one vertex may change its opinion.  Finally, by Lemma~\ref{lemma:lambda} we have $\E(W'|W) = (1-p/n)W$. Then, combining the three previous facts with Theorem~\ref{thm:rollin} yields
\begin{align}\label{eqn:rollin193831}
\sup_{t\in \R} |\Prob(W \leq t)-\Phi(t)| \leq \frac{12n}{p} \sqrt{\var(\E((W'-W)^2|W))}+32\frac{A^3}{\lambda}+6\frac{A^2}{\sqrt{\lambda}},
\end{align}
where $A = \pi^*/\sigma$ and $\lambda = p/n$, and $\Phi$ is the cumulative distribution function of a standard Gaussian random variable; and further
\begin{align}\label{eqn:random873djvsv}
  32\frac{A^3}{\lambda}+6\frac{A^2}{\sqrt{\lambda}} \leq 32\left(\left(\frac{\pi^*}{\sigma}\right)^3 \frac{n}{p}+ \left(\frac{\pi^*}{\sigma}\right)^2\sqrt{\frac{n}{p}}\right).
\end{align}

We now proceed to find an upper bound for $\var(\E((W'-W)^2|W))$. We start by noticing that 
$$\var(\E((W'-W)^2|W))\leq \var(\E((W'-W)^2|\xi)),$$
since $W$ is a function of $\xi$. Next, notice that since at most one vertex changes its opinion in one iteration of the noisy voter model we have
\begin{align*}
(W'-W)^2 = \sum_{x\in V} \left(\frac{\pi(x)}{\sigma}\right)^2 \ind{\{\xi'(x)\neq \xi(x)\}}
\end{align*}
and thus
\begin{align}
\E((W'-W)^2|\xi) = \sum_{x\in V} \left(\frac{\pi(x)}{\sigma}\right)^2 \Prob(\xi'(x)\neq \xi(x)|\xi).\label{eqn:random19sczq1}
\end{align}

Denote by $D_x = D_x(\xi):= \{y \in V:\xi(x) \neq \xi(y)\} \subseteq V$ the (random) set of vertices whose opinion differs from the opinion of $x$, and then we claim that
 \begin{align}
 \Prob(\xi'(x) \neq \xi(x)| \xi ) &= \frac{1}{n}\left(\frac{p}{2} + qP(x,  D_x)\right) = \frac{1}{n}\left(\frac{p}{2} + q\sum_{y\in V} P(x,y)\ind{\{\xi(x)\neq \xi'(x)\}}\right).\label{eqn:probDifferent}
  \end{align}
To see this note first of all that for $\xi'(x)$ to differ from $\xi(x)$, we have to choose vertex $x$ in the round, which has probability $1/n$. After that, with probability $p/2$ we re-randomise the opinion of $x$ to the opposite opinion. Otherwise (with probability $q=1-p$), $x$ chooses a random $y$ vertex using $P(x,\cdot)$ and imitates its opinion, obtaining the opposite opinion if and only if $y \in D_x$. By plugging equation~\eqref{eqn:probDifferent} into equation~\eqref{eqn:random19sczq1} we obtain
\begin{align}
 \E((W'-W)^2|\xi) &= \sum_{x\in V} \left(\frac{\pi(x)}{\sigma}\right)^2 \Prob(\xi'(x)\neq \xi(x)|\xi)\nonumber\\
 & =\frac{1}{n}\sum_{x\in V} \left(\frac{\pi(x)}{\sigma}\right)^2 \left(\frac{p}{2} + q\sum_{y\in V} P(x,y)\ind{\{\xi(x)\neq \xi(y)\}}\right)\nonumber\\
 &= \frac{p}{2n}\sum_{x\in V} \left(\frac{\pi(x)}{\sigma}\right)^2+ \frac{q}{n\sigma^2}\left(\sum_{x\in V}\sum_{y\in V} \pi(x)^2  P(x,y)\ind{\{\xi(x)\neq \xi(y)\}}\right)\nonumber\\
 &=  \frac{p}{2n}\sum_{x\in V} \left(\frac{\pi(x)}{\sigma}\right)^2+ q\frac{\nu^2}{n\sigma^2}\left(\sum_{x\in V}\sum_{y\in V} \mu(x,y)\ind{\{\xi(x)\neq \xi(y)\}}\right)\label{eqn:random11239fj4}\\
 &=\frac{p}{2n}\sum_{x\in V} \left(\frac{\pi(x)}{\sigma}\right)^2+q\frac{\nu^2}{n\sigma^2}\Psi\label{eqn:random39391s},
\end{align}
where in equation~\eqref{eqn:random11239fj4} have used the definition of $\mu$, and in \eqref{eqn:random39391s} the definition of $\Psi$.

Taking variance we have
\begin{align*}
\var(\E((W'-W)^2|\xi)) = (1-p)^2\frac{\nu^4}{n^2\sigma^4}\var(\Psi)\leq \frac{\nu^4}{n^2\sigma^4}\var(\Psi),
\end{align*}
concluding that
\begin{align}\label{eqn:random414t}
    \frac{12n}{p}\sqrt{\var(\E((W'-W)^2|W))} \leq \frac{12n}{p}\frac{\nu^2}{n\sigma^2}\sqrt{\var(\Psi)}.
\end{align}

We complete the proof by combining equation~\eqref{eqn:random414t} with equation~\eqref{eqn:random873djvsv}, and by choosing $C = 32$.
\end{proof}

\subsection{Bounding $\var(\Psi)$}
 In this section we consider the $\NVM(V,P,p)$ process with $|V|=n$, and provide a bound on $\var(\Psi)$.  Recall that in the (discrete-time) dual process $Y_t(x)$ denotes the location of particle initially at vertex $x\in V$
and we denote by $B(x)$, the final location of the particle that started at vertex $x$.

For $u,v,x,y \in V$, we set 
\begin{align}\notag
h(u,v,x,y)&:=\mathbb{P}(B(x)\neq B(u),\,B(y)\neq B(v))-\mathbb{P}(B(x)\neq B(u))\mathbb{P}(B(y)\neq B(v))\\
&=\Cov(\ind{\{B(x)\neq B(u)\}},\ind{\{B(y)\neq B(v)\}})\notag\\
&=\Cov(\ind{\{\xi(x)\neq \xi(u)\}},\ind{\{\xi(y)\neq \xi(v)\}})\label{eq:hasCov}
\end{align}
where the last equality follow from Proposition~\ref{thm:duality} assuming $\xi\sim\Gamma$. With this notation we can write 
\begin{align*}
\var(\Psi) &= \var\left(\sum_{x\in V}\sum_{u\in V}\mu(x,u)\ind{\{\xi(x)\neq \xi(u)\}}\right)\nonumber\\
&=\sum_{x,u, y,v \in V}\mu(x,u)\mu(y,v)\Cov\left(\ind{\{\xi(x)\neq \xi(u)\}},\ind{\{\xi(y)\neq \xi(v)\}}\right)\\
&= \sum_{x,u, y,v \in V}\mu(x,u)\mu(y,v) h(u,v,x,y).
\end{align*}
Denote $h_0(u,v,x,y) = \max\{0,h(u,v,x,y)\}$, then
\begin{align}
\var(\Psi) &\leq \frac{(\pi^*)^2}{\nu^4}\sum_{x,u, y,v \in V}\pi(x)P(x,u)\pi(y)P(y,v) h_0(u,v,x,y).\label{eqn:varPsiineq1}
\end{align}

A careful analysis of the function $h_0$ leads to the following result.
\begin{lemma}\label{lemma:valueA}
In $\NVM(V,P,p)$ we have
$\Var(\Psi)  \leq 16(\pi^*)^2\nu^{-4}\sigma^2.$
\end{lemma}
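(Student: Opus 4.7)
From equation~\eqref{eqn:varPsiineq1}, it suffices to show that
\[
T \;:=\; \sum_{x,u,y,v\in V}\pi(x)P(x,u)\pi(y)P(y,v)\, h_0(u,v,x,y) \;\leq\; 16\sigma^2,
\]
since then $\Var(\Psi)\le (\pi^*)^2\nu^{-4}T\leq 16(\pi^*)^2\nu^{-4}\sigma^2$. By the duality of Proposition~\ref{thm:duality} and the encoding $\eta_z := 2B(z)-1 \in \{\pm 1\}$, one checks that $\ind_{\{B(x)\neq B(u)\}} = \frac{1}{2}(1-\eta_x\eta_u)$, so $h(u,v,x,y) = \frac{1}{4}\Cov(\eta_x\eta_u,\eta_y\eta_v)$. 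The plan is to analyse this covariance via the random partition of $\{x,u,y,v\}$ induced by coalescence in the four-particle subsystem of the dual.

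Conditional on such a partition, each cluster is independently absorbed at $\mathbf{0}$ or $\mathbf{1}$ with probability $1/2$, so $\eta_x\eta_u\eta_y\eta_v = 1$ almost surely when every cluster has even size, and otherwise has conditional mean $0$. Enumerating partitions of the four-element set yields the identity
\[
\Cov(\eta_x\eta_u,\eta_y\eta_v) \;=\; \Cov\bigl(\ind_{\{x\sim u\}},\ind_{\{y\sim v\}}\bigr) + p_{xy|uv} + p_{xv|uy},
\]
where $\sim$ denotes being in the same coalescence cluster and $p_{ab|cd}$ is the probability that the cluster partition is exactly $\{\{a,b\},\{c,d\}\}$. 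Now introduce the cross-coalescence event $G := \bigcup_{a\in\{x,u\},\, b\in\{y,v\}}\{a\sim b\}$. The key claim is that on $G^c$ the pairs $\{x,u\}$ and $\{y,v\}$ never share a vertex in the dual; by the vertex-ringing construction of Remark~\ref{remark:continuoustimedual} they then use disjoint portions of the independent Poisson clocks, so $\ind_{\{x\sim u\}}$ and $\ind_{\{y\sim v\}}$ are conditionally independent given $G^c$. A decomposition of the covariance into its $G$ and $G^c$ contributions together with $|\ind_{\{\cdot\}}|\leq 1$ gives $\bigl|\Cov(\ind_{\{x\sim u\}},\ind_{\{y\sim v\}})\bigr|\leq 2\Prob(G)$, and the trivial bounds $p_{xy|uv},p_{xv|uy}\leq \Prob(G)$ combine to yield $|h(u,v,x,y)| \leq \Prob(G)$; in particular $h_0(u,v,x,y)\leq \Prob(G)$.

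Finally, the standard two-particle reduction (the pairwise coalescence probability in any multi-particle dual coincides with the two-particle one, since coalesced-cluster trajectories are random walks driven by the vertex clocks) together with a union bound give $\Prob(G) \leq \Prob(E_{xy}) + \Prob(E_{xv}) + \Prob(E_{uy}) + \Prob(E_{uv})$. For each of the four resulting quadruple sums one uses $\sum_u P(x,u) = 1$ and the stationarity identity $\sum_y\pi(y)P(y,v) = \pi(v)$ to collapse, e.g., the $\Prob(E_{xy})$ term to $\sum_{x,y}\pi(x)\pi(y)\Prob(E_{xy}) = 4\sigma^2$ by Lemma~\ref{lemma:sigma2}, and similarly for the other three cross pairs. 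Summing yields $T \leq 4\cdot 4\sigma^2 = 16\sigma^2$, completing the proof. The main technical obstacle I anticipate is the rigorous justification of the conditional-independence claim on $G^c$: since all four particles share a single family of vertex Poisson clocks, one has to verify that on the no-cross-coalescence event each pair genuinely draws from a disjoint collection of Poisson rings, so that the randomness driving the two pairs is truly independent.
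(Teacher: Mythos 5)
Your route is genuinely different from the paper's: the paper arrives at the key estimate $h_0(u,v,x,y)\le\Prob(E_{xy})+\Prob(E_{xv})+\Prob(E_{uy})+\Prob(E_{uv})$ (Corollary~\ref{C:martmethod}) through a chain of optional-stopping arguments (Lemmas~\ref{L:fmart}--\ref{L:hmart}, Corollary~\ref{C:gmart}, Proposition~\ref{P:martmethod}), whereas you extract it from a cluster-partition decomposition of $\Cov(\eta_x\eta_u,\eta_y\eta_v)$ combined with a union bound over cross-coalescences. The partition identity and the final collapse of the fourfold sum to $16\sigma^2$ are both correct. However, the step you single out as the main obstacle is in fact false: $\ind_{\{x\sim u\}}$ and $\ind_{\{y\sim v\}}$ are \emph{not} conditionally independent given $G^c$. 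Disjointness on $G^c$ of the Poisson rings consulted by the two pairs does not yield conditional independence, because $G^c$ is itself a non-trivial joint constraint on all four trajectories, and conditioning on it couples the pairs (compare: two independent fair bits are driven by disjoint randomness, yet conditioned on the joint event that they differ they become perfectly anti-correlated).

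The gap is repairable, but the fix is a coupling rather than conditional independence. Build the four-particle dual from four \emph{independent} walks $X_1,\dots,X_4$ (each with its own absorption clock) by the merge-and-continue-as-lowest-index rule, attaching $Y(x),Y(u),Y(y),Y(v)$ to $X_1,X_2,X_3,X_4$. Then $\ind_{\{x\sim u\}}=\ind_{A_1}$ almost surely (the cluster of $x$ always has smallest label $1$ and that of $u$ has smallest label $2$ until the instant $x\sim u$), while $\ind_{\{y\sim v\}}=\ind_{A_2}$ on $G^c$, where $A_i$ denotes the within-pair meeting-before-absorption event for the independent walks; crucially $A_1\perp A_2$. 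Writing $\ind_{\{y\sim v\}}=\ind_{A_2}+\varepsilon$ with $\varepsilon$ supported on $G$ and $|\varepsilon|\le1$ gives $\bigl|\Cov\bigl(\ind_{\{x\sim u\}},\ind_{\{y\sim v\}}\bigr)\bigr|=\bigl|\Cov(\ind_{A_1},\varepsilon)\bigr|\le\Prob(G)$, which together with $p_{xy|uv},p_{xv|uy}\le\Prob(G)$ yields $h_0\le\Prob(G)$ and recovers the factor $16$. Once this replacement is made your argument reaches the same intermediate bound as Corollary~\ref{C:martmethod} and the identical final summation; your path is more directly probabilistic, but the coupling (in particular the asymmetric merge rule needed so one pair's indicator is unaffected by cross-merges) needs to be set up with care, whereas the paper's martingale route avoids these subtleties.
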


\begin{proof}[Proof of Theorem~\ref{thm:normalApprox}]  Note that  Lemma~\ref{lemma:valueA} is exactly the bound of equation~\eqref{eqn:boundVarDelta1}, which applied to \eqref{eqn:hardcondition1} leads to the condition stated in equation \eqref{eqn:hardcondition2}, and thus Theorem~\ref{thm:normalApprox} is proved. 
\end{proof}

We remark that to obtain better bounds on $\var(\Psi)$ we  need to rely on equation~\eqref{eqn:varPsiineq1} in combination with estimates of $h_0$ which depend on the geometry of the graph/transition matrix. Indeed, this is how we proceed for the 2D  torus and the cycle.

\subsection{Proof of Lemma~\ref{lemma:valueA}}

The proof of Lemma~\ref{lemma:valueA} needs a subtle analysis of the function $h_0$ defined in \eqref{eq:hasCov}. Our analysis makes heavy use of the dual process in discrete time-- recall the notation for meeting and absorption times from Section~\ref{sec:properties}.  For ease of notation in this section we shall write $M^{x,u}$ for $M^{Y(x),Y(u)}$ and $\tau^x$ for $\tau^{Y(x)}$. For $u,v,x,y\in V$, recall the definition of $h_0(u,v,x,y) $ and set
\begin{align}\label{eqn:defiTuvxy}
  \mathcal{T}^{u,v,x,y}:=\{\tau^u,\tau^v,\tau^x,\tau^y,M^{u,v}, M^{v,x},M^{u,x},M^{u,y},M^{v,y},M^{x,y}  \}
\end{align}
The following proposition is crucial.
\begin{proposition}\label{P:martmethod}
Fix $u,v,x,y\in V$ and denote by $T_1<T_2<T_3$ the three smallest elements of $\mathcal{T}^{u,v,x,y}$. Then
\begin{align}\label{eqn:martmethodhbound}
h_0(u,v,x,y)&\le \mathbb{P}(T_1=M^{x,y},\,T_2=\tau^x,\,T_3=M^{u,v})+\mathbb{P}(T_1=M^{x,y},\,T_2=M^{u,v})\nonumber\\
&\phantom{\le} +\mathbb{P}(T_1=M^{x,v}, T_2=\tau^x, T_3=M^{u,y})+\mathbb{P}(T_1=M^{x,v},T_2=M^{u,y})\nonumber\\
&\phantom{\le} +\mathbb{P}(T_1=M^{u,v}, T_2=\tau^u, T_3=M^{x,y})+\mathbb{P}(T_1=M^{u,v},T_2=M^{x,y})\nonumber\\
&\phantom{\le} +\mathbb{P}(T_1=M^{u,y}, T_2=\tau^u, T_3=M^{x,v})+\mathbb{P}(T_1=M^{u,y},T_2=M^{x,v}).\nonumber\\
\end{align}
\end{proposition}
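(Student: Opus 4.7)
By the duality of Proposition~\ref{thm:duality}, I would rewrite $h(u,v,x,y) = \Cov(A, B)$ with $A = \mathds{1}_{\{B(x) \ne B(u)\}}$ and $B = \mathds{1}_{\{B(y) \ne B(v)\}}$, where $B(\cdot)$ denote absorption colours in the dual coalescing walk. The key property I will exploit is that in the dual, whenever a blob of coalesced particles is absorbed, its colour is uniform on $\{\mathbf{0}, \mathbf{1}\}$ and independent of all other blob colours. Let $\mathcal{G}$ denote the sigma-algebra generated by the identities and ordering of $(T_1, T_2, T_3)$.

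The plan is a case analysis showing that $\Cov(A, B \mid \mathcal{G}) = 0$ outside the eight events listed in \eqref{eqn:martmethodhbound}. First, if $T_1 \in \{M^{x,u}, M^{y,v}\}$, the relevant pair coalesces first, forcing $A = 0$ or $B = 0$, so the conditional covariance vanishes. Next, if $T_1 = \tau^w$ for some $w \in \{x, u, y, v\}$, then $w$ is absorbed as a singleton with uniform independent colour, making the indicator involving $w$ have conditional mean $1/2$ independent of everything else, again giving zero conditional covariance. The remaining case is that $T_1$ is a cross meeting; by symmetry I would treat $T_1 = M^{x,y}$, which forms blob $\{x,y\}$, and analyse $T_2$. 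If $T_2$ is a meeting of the blob with $u$ or $v$, then $A = 0$ or $B = 0$; if $T_2 \in \{\tau^u, \tau^v\}$, the singleton's colour is uniform independent; if $T_2 = \tau^{xy}$ (blob absorbed), a further analysis of $T_3$ yields zero conditional covariance unless $T_3 = M^{u,v}$ (item 1a in the RHS); and if $T_2 = M^{u,v}$, both pair-blobs form (item 1b). Analogous analyses for $T_1 \in \{M^{x,v}, M^{u,y}, M^{u,v}\}$ will generate the other six contributing events.

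I then plan to combine the case analysis with the law of total covariance, $h = \E[\Cov(A, B \mid \mathcal{G})] + \Cov(\E(A \mid \mathcal{G}), \E(B \mid \mathcal{G}))$. The first term is bounded by $\tfrac{1}{4}$ times the RHS of \eqref{eqn:martmethodhbound}, since the conditional covariance of two $[0,1]$-valued indicators is at most $1/4$ and only the eight contributing events have nonzero support. The main obstacle will be handling the second term: on non-contributing events the conditional expectations take values in $\{0, 1/2\}$, and I need to show that their joint variability does not contribute positively to $h$. I would hope to do this either by a symmetry/coupling argument exploiting the product structure of the blob-colour randomisation, or by decomposing $h$ directly through the coalescence partition of $\{x, u, y, v\}$ (only the two-pair partitions $\{\{x,y\},\{u,v\}\}$ and $\{\{x,v\},\{u,y\}\}$ contribute), and bounding each such partition probability by the corresponding subset of the eight RHS events. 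Once the second term is controlled, the inequality $h \le$ RHS follows, and hence $h_0 = \max(h, 0) \le$ RHS.
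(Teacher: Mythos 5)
Your plan diverges substantially from the paper's proof, which is a nested martingale argument: the paper shows that $H_t := h(Y_t(u),Y_t(v),Y_t(x),Y_t(y))$ is a bounded martingale up to $S := \min \mathcal{T}^{u,v,x,y}$, applies the optional stopping theorem to obtain $h(u,v,x,y) = \E[H_S]$, observes that $H_S$ vanishes on the absorption events $\{S=\tau^w\}$ and the within-pair meeting events $\{S=M^{x,u}\},\{S=M^{y,v}\}$ by the algebraic structure of $h$, and then on the four surviving events reduces $H_S$ to a $g$-function and iterates the argument one level down (Lemma~\ref{L:gmart}, Corollary~\ref{C:gmart}) via the strong Markov property. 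There is never any residual term to control because $h$ is hit exactly by optional stopping.

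Your route via the law of total covariance
\[
h(u,v,x,y)=\E[\Cov(A,B\mid\mathcal{G})]+\Cov(\E(A\mid\mathcal{G}),\E(B\mid\mathcal{G}))
\]
has a genuine gap in the second term, and you say so yourself. The obstacle is concrete: given $\mathcal{G}$ (and hence the final blob partition of $\{x,u,y,v\}$, which is determined by the identities and ordering of $T_1,T_2,T_3$), one has $\E(A\mid\mathcal{G})=\tfrac12\ind_{\{x\not\sim_{\mathcal P}u\}}$ and $\E(B\mid\mathcal{G})=\tfrac12\ind_{\{y\not\sim_{\mathcal P}v\}}$. Thus the second term equals $\tfrac14\Cov\bigl(\ind_{\{x\not\sim_{\mathcal P}u\}},\ind_{\{y\not\sim_{\mathcal P}v\}}\bigr)$, a covariance of two events about the random coalescence partition. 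There is no reason this is nonpositive, nor is it supported on your eight events -- it is a global functional of the coalescence dynamics. The two vague fallback ideas you mention (a symmetry/coupling argument, or a direct decomposition over partitions) do not obviously close this: an explicit partition decomposition gives $h(u,v,x,y)=\tfrac14\Cov(\ind_{\{x\not\sim u\}},\ind_{\{y\not\sim v\}})+\tfrac14\Prob(\mathcal{P}=\{\{x,y\},\{u,v\}\})+\tfrac14\Prob(\mathcal{P}=\{\{x,v\},\{u,y\}\})$, which still leaves the partition-covariance term to bound. Without an argument for that term, your proof is incomplete; the paper's martingale framing is not merely a stylistic choice but is what avoids this residual entirely.

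Your case analysis on the first term is sound (absorptions of singletons and within-pair meetings kill the conditional covariance), and the identification of the eight surviving configurations matches the paper's conclusion, so the intuition is correct. But the argument as a whole does not establish the proposition.
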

For the proof of Lemma~\ref{lemma:valueA}, the following corollary will suffice.
\begin{corollary}\label{C:martmethod}
For each $u,v,x,y\in V$, 
$h_0(u,v,x,y)\le \mathbb{P}(M^{x,y}<\tau^x\wedge\tau^y)+\mathbb{P}(M^{x,v}<\tau^x\wedge\tau^v)+\mathbb{P}(M^{u,v}<\tau^u\wedge\tau^v)+\mathbb{P}(M^{u,y}<\tau^u\wedge\tau^y)$.
\end{corollary}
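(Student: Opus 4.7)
The plan is to obtain Corollary~\ref{C:martmethod} as a direct, information-losing consequence of Proposition~\ref{P:martmethod}. I would group the eight terms on the right-hand side of~\eqref{eqn:martmethodhbound} into four pairs according to which meeting time appears as $T_1$: one pair with $T_1=M^{x,y}$ (the first two terms), one with $T_1=M^{x,v}$ (the third and fourth), one with $T_1=M^{u,v}$ (the fifth and sixth), and one with $T_1=M^{u,y}$ (the last two).

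Within each such pair, both summands are probabilities of events contained in $\{T_1=M^{a,b}\}$ for the common meeting time $M^{a,b}$; the two summands differ only in whether $T_2$ is specified to be an absorption time (with $T_3$ another meeting time) or an alternative meeting time. By subadditivity, the sum of the two probabilities in each pair is therefore at most $\Prob(T_1=M^{a,b})$. Since $T_1$ is by definition the smallest element of $\mathcal{T}^{u,v,x,y}$, and this set contains both $\tau^a$ and $\tau^b$, the event $\{T_1=M^{a,b}\}$ is contained in $\{M^{a,b}<\tau^a\wedge\tau^b\}$. Adding the four resulting bounds yields precisely the inequality of the corollary.

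There is no real obstacle here: all the substantive probabilistic content has already been absorbed into Proposition~\ref{P:martmethod}, and the corollary is simply a cleaner but strictly weaker repackaging of it. The only minor point to verify is that subadditivity (rather than disjointness) is what we are using to combine the two probabilities in each pair; this sidesteps any concern about coincidences among the times in $\mathcal{T}^{u,v,x,y}$ in the discrete-time setting.
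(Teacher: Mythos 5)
Your decomposition into four pairs, each bounded by a single corollary term, is the right plan and matches what the paper implicitly relies on (the corollary is stated without its own proof). However, the justification you give for the pair bound is incorrect. Subadditivity yields $\Prob(A\cup B)\le\Prob(A)+\Prob(B)$, which points the wrong way: it cannot deliver $\Prob(A_1)+\Prob(A_2)\le\Prob(C)$ merely from $A_1,A_2\subseteq C$ (take $A_1=A_2=C$ for a counterexample). What you actually need is that $A_1$ and $A_2$ are \emph{disjoint}, so that $\Prob(A_1)+\Prob(A_2)=\Prob(A_1\cup A_2)\le\Prob(C)$. Disjointness does hold here, precisely because the two events in each pair prescribe different values for $T_2$ (an absorption time in one, a meeting time in the other), and in the discrete-time dual an absorption time and a meeting time never coincide: at each step exactly one vertex of $V$ is selected and its resident particles jump together, so a single step either causes a coalescence within $V$ or an absorption into $\{\mathbf{0},\mathbf{1}\}$, never both. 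Your remark that subadditivity lets you \emph{sidestep} coincidences among the elements of $\mathcal{T}^{u,v,x,y}$ therefore has things backwards: since disjointness is the tool, you must actually rule out the coincidence, not avoid addressing it. Once this is corrected, the rest of the argument --- the containment in $\{T_1=M^{a,b}\}\subseteq\{M^{a,b}<\tau^a\wedge\tau^b\}$ and summing the four pair bounds --- is sound.
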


\begin{proof}[Proof of Lemma~\ref{lemma:valueA}]
 For $x,y\in V$ define $E_{x,y} := \{M^{x,y}<\tau^x\wedge\tau^y\}$, then by combining \eqref{eqn:varPsiineq1} with Corollary~\ref{C:martmethod} we obtain \begin{align*}
&\var(\Psi) \\&\leq \frac{(\pi^*)^2}{\nu^4} \sum_{x,u,y,v\in V} \pi(x)P(x,u)\pi(y)P(y,v)
\left(\mathbb{P}(E_{x,y})+\mathbb{P}(E_{x,v})+\mathbb{P}(E_{u,v})+\mathbb{P}(E_{u,y})\right).
\end{align*}
We claim that the right-hand side above is equal to $4\frac{(\pi^*)^2}{\nu^4} \sum_{x,y\in V} \pi(x)\pi(y)
\mathbb{P}(E_{x,y}).$ To see this, observe that 
\begin{align*}
    \sum_{x,u,y,v\in V} \pi(x)P(x,u)\pi(y)P(y,v)
\mathbb{P}(E_{x,v})&=\sum_{x,v\in V}\pi(x)\sum_{y\in V}\pi(y)P(y,v)\mathbb{P}(E_{x,v})\\
&=\sum_{x,v\in V}\pi(x)\pi(v)\mathbb{P}(E_{x,v}),
\end{align*}
and similarly 
\begin{align*}
\sum_{x,u,y,v\in V} \pi(x)P(x,u)\pi(y)P(y,v)
\mathbb{P}(E_{u,v})&=\sum_{u,v\in V}\pi(u)\pi(v)\mathbb{P}(E_{u,v}),\\
\sum_{x,u,y,v\in V} \pi(x)P(x,u)\pi(y)P(y,v)
\mathbb{P}(E_{u,y})&=\sum_{u,y\in V}\pi(u)\pi(y)\mathbb{P}(E_{u,y}),
\end{align*}
which proves the claim. We deduce that
\begin{align*}
\var(\Psi)&=4\frac{(\pi^*)^2}{\nu^4} \sum_{x,u,y,v\in V} \pi(x)P(x,u)\pi(y)P(y,v)
\mathbb{P}(E_{x,y})\\
&=4\frac{(\pi^*)^2}{\nu^4}\sum_{x,y\in V} \pi(x)\pi(y) \mathbb{P}(E_{x,y}) = 16\frac{(\pi^*)^2}{\nu^4}\sigma^2,
\end{align*}
where the final equality holds due to Lemma \ref{lemma:sigma2}.
\end{proof}
We now focus on proving Proposition~\ref{P:martmethod}. For that, we require some preliminary results that use similar martingale arguments. We begin with a simple lemma.
\begin{lemma}\label{L:fmart}
For distinct $w,z\in V$, define $f(w,z):=\mathbb{P}(B(w)=B(z))-1/2$ and for each $t\in\mathbb{N}_0$ set $F_t=f(Y_t(w),Y_t(z))$. Then $(F_t)$ is a martingale up to time $R=\min\{\tau^w,\tau^z,M^{w,z}\}$. Moreover, $f(w,z)=\frac12\mathbb{P}(R=M^{w,z})$.
\end{lemma}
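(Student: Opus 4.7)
The plan is to start with the ``moreover'' identity and then use it to bootstrap the martingale property via the strong Markov property of the dual.

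For the identity $f(w,z)=\tfrac{1}{2}\mathds{P}(R=M^{w,z})$, I would split $\mathds{P}(B(w)=B(z))$ according to which of the three events in the definition of $R$ occurs first. On $\{R=M^{w,z}\}$ the two particles meet before either is absorbed, then coalesce and move together forever, so they must end at the same stubborn vertex: $B(w)=B(z)$ almost surely on this event. On the complement $\{R<M^{w,z}\}$, WLOG $R=\tau^w$; by~\eqref{TPxy} the particle $Y(w)$ is absorbed at this time into $\mathbf{0}$ or $\mathbf{1}$ uniformly, and the coin flip determining which stubborn vertex is taken is independent of everything else. The strong Markov property then tells us that particle $Y(z)$ evolves from time $R$ onwards (independently of $B(w)$) as a single particle, which is itself absorbed uniformly into $\{\mathbf{0},\mathbf{1}\}$. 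Hence $B(w)$ and $B(z)$ are independent Bernoulli$(1/2)$ conditional on $\{R<M^{w,z}\}$, so $\mathds{P}(B(w)=B(z)\mid R<M^{w,z})=1/2$. Combining gives $\mathds{P}(B(w)=B(z))=\tfrac12+\tfrac12\mathds{P}(R=M^{w,z})$ and the identity follows.

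For the martingale property, let $\mathcal F_t$ denote the natural filtration of the dual process $(Y_s)_{s\le t}$ and introduce the Doob martingale $G_t:=\mathds{E}[\mathds{1}_{\{B(w)=B(z)\}}\mid\mathcal F_t]$. For every $t<R$, both $Y_t(w)$ and $Y_t(z)$ are distinct vertices in $V$, and by the Markov property of the dual at time $t$, the future is distributed as a fresh coalescing dual started from the pair $(Y_t(w),Y_t(z))$. Applying the identity just proved to this shifted chain gives $G_t=f(Y_t(w),Y_t(z))+\tfrac12=F_t+\tfrac12$. Thus $F_t=G_t-\tfrac12$ on $\{t\le R\}$, and the stopped process $(F_{t\wedge R})$ inherits the martingale property from the Doob martingale $(G_t)$.

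The only real subtlety is the restart step: one must identify the function $f$, which is defined in terms of two fresh particles, with the conditional absorption probability after seeing the dual up to time $t$. This is immediate from time-homogeneity of the dual chain, but requires the caveat that we only restart while both particles are still alive and unmatched, which is precisely the content of restricting to $t<R$. The independence of $B(w)$ and $B(z)$ on $\{R<M^{w,z}\}$, used in the identity, likewise hinges on the symmetry of the re-randomisation coin in~\eqref{TPxy}; once this is made explicit via the graphical/strong-Markov picture, no further calculation is needed.
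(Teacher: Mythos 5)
Your proof is correct and uses the same essential ingredients as the paper's: the Doob-martingale structure of $\mathds E[\indic{B(w)=B(z)}\mid\mathcal F_t]$ together with time-homogeneity of the dual gives the martingale property, and the three-way decomposition over which element of $\{\tau^w,\tau^z,M^{w,z}\}$ is smallest gives the identity. The only difference is order: the paper asserts the martingale property, applies optional stopping, and reads off the identity from $\mathds E[F_R]$, whereas you prove the identity by the direct decomposition and then establish the martingale property separately — so your second step plus one application of optional stopping would have reproduced your first step, making the direct decomposition strictly speaking redundant.
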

\begin{proof}
It is immediate that $F_t$ is a martingale. We then apply optional stopping at time $R$ which is valid since the martingale is bounded and $R<\infty$ almost surely (the chain  $P$ is finite and irreducible). 
\end{proof}
\begin{lemma}\label{L:gmart}For distinct $a,c,d$, define $g(a,c,d):=\mathbb{P}(B(d)\neq B(a),B(d)\neq B(c))-\mathbb{P}(B(d)\neq B(a))\mathbb{P}(B(d)\neq B(c))$ and for each $t\in\mathbb{N}_0$ set $G_t:=g(Y_t(a),Y_t(c),Y_t(d))$. Then $(G_t)$ is a martingale up to time $T:=\min\{\tau^a,\tau^c,\tau^d,M^{a,c},M^{a,d},M^{c,d}\}$. Moreover 
\[g(a,c,d)=\E[G_T\indic{T=\tau^d}+G_T\indic{T=M^{a,c}}].\]
\end{lemma}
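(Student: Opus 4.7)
The plan is to prove the lemma in three steps: establish the martingale property of $(G_t)$, apply optional stopping at $T$, and then analyse the stopped value event by event.

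For the martingale property I would work in the filtration of the dual process. For $t<T$ the three particles $Y_t(a),Y_t(c),Y_t(d)$ occupy distinct vertices of $V$, so $g(Y_t(a),Y_t(c),Y_t(d))$ is well defined via its original formula. The joint probability $\mathds{P}(B(Y_t(d))\neq B(Y_t(a)),B(Y_t(d))\neq B(Y_t(c)))$ and each of the two marginals $\mathds{P}(B(Y_t(d))\neq B(Y_t(\cdot)))$ are individually martingales up to $T$, by the Markov property of the dual (restricted to the three particles, which evolve independently until they coalesce or are absorbed) and the tower property. The non-trivial point is that $g$ also contains a product of the two marginal martingales. I would verify directly that the one-step generator of the dual annihilates $g$ on distinct triples by enumerating the possible transitions from a configuration $(\alpha,\gamma,\delta)$---namely moves of each particle to another vertex of $V$, coalescences with another particle, and absorptions by $\mathbf{0}$ or $\mathbf{1}$---using the boundary values identified in the next step together with reversibility of $P$ to check that the contributions from moves involving the distinguished particle $d$ cancel against those from moves involving $a$ and $c$.

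Since $T$ is almost surely finite (each particle is absorbed in finite expected time whenever $p>0$, so in particular at least one of the six events defining $T$ happens in finite time) and $|G_t|\le 1$, optional stopping then yields $g(a,c,d)=G_0=\E[G_T]$. It remains to evaluate $G_T$ on each of the six events whose minimum defines $T$ and show that $G_T=0$ outside of $\{T=\tau^d\}\cup\{T=M^{a,c}\}$. On $\{T=\tau^a\}$, the vertex $Y_T(a)$ is one of the stubborn states, so $B(Y_T(a))$ is deterministic; by the $0$-$1$ symmetry of the dual (swapping $\mathbf{0}$ and $\mathbf{1}$ preserves the law of the absorption vector), both the joint probability and the product of marginals in the definition of $g$ factor as $\tfrac{1}{2}\mathds{P}(B(Y_T(c))\neq B(Y_T(d)))$ and therefore cancel. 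The case $\{T=\tau^c\}$ is symmetric. On $\{T=M^{a,d}\}$ the first and third arguments of $g$ coincide, so $\mathds{P}(B(Y_T(d))\neq B(Y_T(a)))=0$ forces both terms of $g$ to vanish; the case $\{T=M^{c,d}\}$ is analogous. Substituting these four zeros into the decomposition of $\E[G_T]$ by the events in $\mathcal{T}^{a,c,d,d}$ yields the displayed identity.

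The main obstacle is the martingale verification in the first step: the product of marginals is not a martingale merely by the tower property, and the required cancellation between transitions involving $d$ and those involving $a,c$ needs to be checked by a direct transition-by-transition computation. Once that is in place, the remaining steps reduce to routine manipulations using $0$-$1$ symmetry and the boundary behaviour of $g$ when two arguments coincide or one argument is stubborn.
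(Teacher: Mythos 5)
Your instinct that the martingale claim is the crux is well placed, and it is sharper than the paper's own proof, which disposes of the issue with the single clause ``the fact that $G_t$ is a martingale is immediate.'' It is not: while $\mathds{P}(B(d)\neq B(a),B(d)\neq B(c))$ evaluated along the dual is a Doob martingale, the subtracted term is the product of two Doob martingales $\mathds{P}(B(d)\neq B(a)\mid\mathcal F_t)\cdot\mathds{P}(B(d)\neq B(c)\mid\mathcal F_t)$, and such a product is itself a martingale only if the two sets of increments are conditionally uncorrelated. In the four-particle $h$ of Lemma~\ref{L:hmart} that uncorrelatedness does hold (before the first coalescence or absorption) because the two marginal events use disjoint particle pairs $\{u,x\}$ and $\{v,y\}$ and at most one particle moves per step of the dual. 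In $g(a,c,d)$ the two marginal events share the particle $d$; a step that moves or absorbs $d$ changes both conditional probabilities simultaneously, so the conditional covariance of the increments is generically nonzero.

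Carrying out the transition-by-transition verification you propose therefore does not produce the cancellation you hope for; as far as I can tell it instead shows that $(G_t)$ fails to be a martingale, which leaves a genuine gap, present both in your proposal and in the paper's own proof. Concretely, on the $3$-cycle with $p=1/2$ (so that $T=1$ almost surely for three particles initially on distinct vertices, and $P$ is reversible, so reversibility cannot rescue the step), one computes $G_0=g(a,c,d)=\frac{p(1-p)}{2(1+p)^2}=\frac{12}{216}$ while $\E[G_1]=\frac{p(1-p)(5+p)}{12(1+p)^2}=\frac{11}{216}$. The deficit equals $\Cov\big(\mathds{P}(B(d)\neq B(a)\mid\mathcal F_1),\mathds{P}(B(d)\neq B(c)\mid\mathcal F_1)\big)$, consistent with the law-of-total-covariance decomposition $g(a,c,d)=\E[G_t]+\Cov\big(\E[\ind_{\{B(d)\neq B(a)\}}\mid\mathcal F_t],\E[\ind_{\{B(d)\neq B(c)\}}\mid\mathcal F_t]\big)$, whose second term has no reason to vanish when the two indicators share a particle. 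Your boundary analysis on $\{T=\tau^a\},\{T=\tau^c\},\{T=M^{a,d}\},\{T=M^{c,d}\}$ matches the paper and is correct, but the optional-stopping identity $g(a,c,d)=\E\big[G_T\ind_{\{T=\tau^d\}}+G_T\ind_{\{T=M^{a,c}\}}\big]$ does not follow without a corrected martingale step, and the correction term appears to have the wrong sign for the eventual upper bound asserted in Corollary~\ref{C:gmart}.
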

\begin{proof}The fact that $G_t$ is a martingale is immediate. 
We apply optional stopping at time $T$ to obtain $g(a,c,d)=\E[G_T]$. Note that almost surely,
\[
G_T\indic{T=\tau^a}=\left(\frac12\mathbb{P}(B(Y_T(d))\neq B(Y_T(c)))-\frac12\mathbb{P}(B(Y_T(d))\neq B(Y_T(c)))\right)\indic{T=\tau^a}=0.
\]Similarly $G_T\indic{T=\tau^c}=0$ almost surely. Furthermore, $G_T\indic{T=M^{a,d}}=G_T\indic{T=M^{d,c}}=0$. Hence $g(a,c,d)=\E[G_T\indic{T=\tau^d}+G_T\indic{T=M^{a,c}}].$
\end{proof}
Combining these two lemmas we can obtain the following:
\begin{corollary}\label{C:gmart}
For distinct $a,c,d\in V$ set $G_t:=g(Y_t(a),Y_t(c),Y_t(d))$. Define times \begin{align*}T&:=\min\{\tau^a,\tau^c,\tau^d,M^{a,c},M^{a,d},M^{c,d}\},\\R&:=\inf\{t>T:\,t\in \{\tau^a,\tau^c,\tau^d,M^{a,c},M^{a,d},M^{c,d}\}\}.\end{align*} Then 
\[
g(a,c,d)\le (1/4)\mathbb{P}(T=\tau^d,R=M^{a,c})+\mathbb{P}(T=M^{a,c}).
\]
\end{corollary}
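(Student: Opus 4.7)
\textbf{Proof proposal for Corollary~\ref{C:gmart}.}

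The plan is to apply Lemma~\ref{L:gmart} to obtain $g(a,c,d)=\mathds{E}[G_T\indic{T=\tau^d}]+\mathds{E}[G_T\indic{T=M^{a,c}}]$ and then bound each summand separately using the strong Markov property at time $T$. The second term is easy: the covariance-like quantity $G_T$ is uniformly bounded (indeed at $T=M^{a,c}$ the two arguments $Y_T(a),Y_T(c)$ coincide, giving $G_T$ the form $p(1-p)\le 1/4$), so $\mathds{E}[G_T\indic{T=M^{a,c}}]\le \mathds{P}(T=M^{a,c})$.

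The substantive step is bounding $\mathds{E}[G_T\indic{T=\tau^d}]$. On the event $\{T=\tau^d\}$, $Y_T(d)\in\{\mathbf{0},\mathbf{1}\}$ while $Y_T(a),Y_T(c)\in V$. Conditioning on $\mathcal{F}_T$ and using the $\mathbf{0}\leftrightarrow\mathbf{1}$ symmetry of the dual (the absorption of any particle in $V$ has probability $1/2$ of going to either stubborn vertex, with these two outcomes producing mirror-image evolutions), the marginals satisfy $\mathds{P}(B(Y_T(d))\neq B(Y_T(a))\mid\mathcal{F}_T)=\mathds{P}(B(Y_T(d))\neq B(Y_T(c))\mid\mathcal{F}_T)=\tfrac12$, and the joint probability can be rewritten, via the identity $\mathds{P}(B(a')=B(c')=0)=\tfrac12\mathds{P}(B(a')=B(c'))$, as $\tfrac14+\tfrac12 f(Y_T(a),Y_T(c))$ with $f$ defined in Lemma~\ref{L:fmart}. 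Thus on $\{T=\tau^d\}$ we have $G_T=\tfrac12 f(Y_T(a),Y_T(c))$.

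Applying Lemma~\ref{L:fmart} to the pair $(Y_T(a),Y_T(c))$ and using the strong Markov property at time $T$, the quantity $f(Y_T(a),Y_T(c))$ equals $\tfrac12$ times the conditional probability that, in the continued dual evolution, the walks starting at $Y_T(a),Y_T(c)$ meet before either is absorbed. Since after time $T=\tau^d$ the meeting times $M^{a,d},M^{c,d}$ are infinite (particles in $\{\mathbf{0},\mathbf{1}\}$ do not contribute to $V$-meetings), this event is precisely $\{R=M^{a,c}\}$. Taking expectations gives
\[
\mathds{E}\bigl[G_T\indic{T=\tau^d}\bigr]=\tfrac14\,\mathds{P}(T=\tau^d,\,R=M^{a,c}).
\]

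Combining the two bounds yields $g(a,c,d)\le \tfrac14\mathds{P}(T=\tau^d,R=M^{a,c})+\mathds{P}(T=M^{a,c})\le \mathds{P}(T=\tau^d,R=M^{a,c})+\mathds{P}(T=M^{a,c})$. The only step requiring care is the symmetry-based identity $G_T=\tfrac12 f(Y_T(a),Y_T(c))$ on $\{T=\tau^d\}$, which I expect is the main (minor) obstacle, since it requires verifying both that the marginals are $\tfrac12$ (using symmetry) and that the joint probability simplifies correctly regardless of which stubborn vertex absorbs $Y(d)$.
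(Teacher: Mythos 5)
Your proof is correct and follows essentially the same route as the paper's: apply Lemma~\ref{L:gmart}, observe that on $\{T=\tau^d\}$ one has $G_T=\tfrac12 f(Y_T(a),Y_T(c))$, then use the strong Markov property at $T$ together with Lemma~\ref{L:fmart}, and bound the remaining term by $\mathds{P}(T=M^{a,c})$. You supply more detail than the paper on verifying the key identity $G_T=\tfrac12 f(Y_T(a),Y_T(c))$ via the $\mathbf{0}\leftrightarrow\mathbf{1}$ symmetry, and you observe the sharper pointwise bound $G_T\le\tfrac14$ on $\{T=M^{a,c}\}$ (the paper just uses $G_T\le1$), but these are refinements of the same argument, not a different one.
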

\begin{proof}
This follows by applying Lemmas~\ref{L:fmart} and~\ref{L:gmart}, the strong Markov property at time $T$, and the bound $G_t\le 1$ which holds for all times $t$. Indeed, we have 
\[
\indic{T=\tau^d}G_T=\frac12\indic{T=\tau^d} f(Y_T(a),Y_T(c)),
\]
and by the strong Markov property at time $T$ and Lemma~\ref{L:fmart}
\[
\mathbb{E}[\indic{T=\tau^d}f(Y_T(a),Y_T(c))]=\frac12\mathbb{P}(T=\tau^d,R=M^{a,c}).
\]
Thus by Lemma~\ref{L:gmart} and the bound $G_T\le 1$,
\[
g(a,c,d)\le \frac14\mathbb{P}(T=\tau^d,R=M^{a,c})+\mathbb{P}(T=M^{a,c}).\qedhere
\]
\end{proof}
\begin{lemma}\label{L:hmart}For distinct $u,v,x,y\in V$, define $H_t:=h(Y_t(u),Y_t(v),Y_t(x),Y_t(y))$ and recall the definition of $\mathcal{T}^{u,v,x,y}$ in equation~\eqref{eqn:defiTuvxy}. Then $(H_t)$ is a martingale up to time $\varrho:=\min \mathcal{T}^{u,v,x,y}$. Moreover \[
h(u,v,x,y)=\mathbb{E}\left[H_\varrho\indic{\varrho\in \{M^{x,y},M^{x,v},M^{u,v},M^{u,y}\}}\right].
\]
\end{lemma}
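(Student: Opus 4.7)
My plan has three steps: (1) establish the martingale property of $H_t$ up to time $S$; (2) apply optional stopping to get $h(u,v,x,y)=\mathds{E}[H_S]$; and (3) argue case by case that $H_S$ vanishes on the six ``bad'' events $\{S=\tau^w\}$ for $w\in\{u,v,x,y\}$, $\{S=M^{u,x}\}$ and $\{S=M^{v,y}\}$, leaving only the four cross-pair meeting events in the expectation.

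For step (1), I would set $A:=\ind_{\{B(x)\neq B(u)\}}$ and $C:=\ind_{\{B(y)\neq B(v)\}}$ and use the strong Markov property of the dual to write
\[
H_t \;=\; \mathds{E}[AC\mid\mathcal{F}_t]\;-\;\mathds{E}[A\mid\mathcal{F}_t]\,\mathds{E}[C\mid\mathcal{F}_t]\;=:\;M^{AC}_t - M^A_t M^C_t,
\]
so each of $M^{AC}_t, M^A_t, M^C_t$ is automatically a conditional-expectation martingale. A coupling/projection argument of the type already used in Lemma~\ref{L:fmart} shows that $M^A_t=\tfrac12-f(Y_t(u),Y_t(x))$ depends only on the two coordinates $(Y_t(u),Y_t(x))$, and similarly $M^C_t$ only on $(Y_t(v),Y_t(y))$; intuitively, the presence of extra particles in the dual never alters when or where a tagged particle moves. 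The martingale property of $H_t$ then reduces to verifying $\mathds{E}[(M^A_{t+1}-M^A_t)(M^C_{t+1}-M^C_t)\mid\mathcal{F}_t]=0$ for $t<S$, which follows from the combinatorial fact that the discrete dual selects exactly one vertex per step: on $\{t<S\}$ the four particles occupy distinct non-stubborn vertices, so at most one of them moves in any given step, forcing one of the two increments to vanish identically.

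For step (3), optional stopping gives $\mathds{E}[H_S]=H_0=h(u,v,x,y)$, valid since $|H_t|\le 1$ and $S<\infty$ a.s. On $\{S=M^{u,x}\}$ the positions $Y_S(u)$ and $Y_S(x)$ coincide, so the two particles coalesce and share an absorption state; both summands defining $H_S$ then contain a factor that vanishes, giving $H_S=0$. The case $\{S=M^{v,y}\}$ is symmetric. For $\{S=\tau^u\}$ I would condition on $\mathcal{F}_{S-}$: by minimality of $S$ the particles $v,x,y$ sit at distinct fixed vertices $b,c,d$ of $V$, and the only new randomness at step $S$ is a fair choice sending $u$ to $\mathbf{0}$ or $\mathbf{1}$. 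The identity
\[
h(\mathbf{0},b,c,d) = \Cov\!\left(\ind_{\{B(c)=1\}},\ind_{\{B(d)\neq B(b)\}}\right) = -\Cov\!\left(\ind_{\{B(c)=0\}},\ind_{\{B(d)\neq B(b)\}}\right) = -h(\mathbf{1},b,c,d)
\]
then forces the two equally likely conditional outcomes to cancel, yielding $\mathds{E}[H_S\ind_{\{S=\tau^u\}}\mid\mathcal{F}_{S-}]=0$. The cases $\tau^v,\tau^x,\tau^y$ are identical, since swapping $\mathbf{0}\leftrightarrow\mathbf{1}$ in any single argument of $h$ complements exactly one of its defining indicators and thus negates $h$.

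The main obstacle is step (1): a product of martingales is not a martingale in general, and the argument depends crucially on the combinatorial observation that at most one tagged particle can move per step before time $S$. Once this is in place the remaining analysis is routine, driven by coalescence (which kills an indicator factor at the ``within-pair'' meetings $M^{u,x}$ and $M^{v,y}$) and by the $\mathbf{0}\leftrightarrow\mathbf{1}$ sign symmetry of $h$ at absorptions (which produces perfect cancellation).
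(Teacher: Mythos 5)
Your proof is correct and follows the same martingale / optional-stopping route as the paper. The one substantive addition is your explicit verification that $(H_t)$ is a martingale up to $S$ — writing $H_t=M^{AC}_t-M^A_tM^C_t$ as a Doob martingale minus a product of two Doob martingales, and observing that before $S$ the four tagged particles occupy distinct non-stubborn vertices while the discrete dual updates only one vertex per step, so the increments of $M^A$ (a function of $(Y_t(u),Y_t(x))$) and $M^C$ (a function of $(Y_t(v),Y_t(y))$) are never simultaneously nonzero and the cross term $\E[\Delta M^A_t\,\Delta M^C_t\mid\mathcal F_t]$ vanishes identically — whereas the paper treats this as immediate by analogy with Lemmas~\ref{L:fmart} and~\ref{L:gmart}; for the absorption cases your argument, conditioning on $\mathcal F_{S-1}$ and using the sign flip $h(\mathbf 0,b,c,d)=-h(\mathbf 1,b,c,d)$ together with the fair-coin absorption, is a valid (slightly weaker, but sufficient) variant of the paper's pointwise identity $H_S\indic{S=\tau^w}=0$ a.s., and the within-pair coalescence cases $S=M^{u,x}$, $S=M^{v,y}$ are handled identically to the paper.
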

\begin{proof}
The fact that $H_t$ is a martingale is immediate. Applying optional stopping gives $h(u,v,x,y)=\mathbb{E}[H_\varrho]$. However note that
almost surely \[
H_\varrho\indic{\varrho=\tau^u}=\left(\frac12\mathbb{P}(B(Y_\varrho(y))\neq B(Y_\varrho(v)))-\frac12\mathbb{P}(B(Y_\varrho(y))\neq B(Y_\varrho(v)))\right)\indic{\varrho=\tau^u}=0,
\]
and similarly $H_\varrho\indic{\varrho=\tau^v}=H_\varrho\indic{\varrho=\tau^x}=H_\varrho\indic{\varrho=\tau^y}=0$ almost surely.
Furthermore, $H_\varrho\indic{\varrho=M^{x,u}}=H_\varrho\indic{\varrho=M^{y,v}}=0$. This gives the stated identity.
\end{proof}
\begin{proof}[Proof of Proposition~\ref{P:martmethod}] First observe that if $h<c$ for some $c>0$ then it follows that $h_0<c$. From Lemma~\ref{L:hmart} we have
\begin{align*}&h(u,v,x,y)\\&=\mathbb{E}\left[H_{T_1}\indic{T_1=M^{x,y}}+H_{T_1}\indic{T_1=M^{x,v}}+H_{T_1}\indic{T_1=M^{u,v}}+H_{T_1}\indic{T_1=M^{u,y}}\right]\\
&=\mathbb{E}\Big[g(Y_{T_1}(u),Y_{T_1}(v),Y_{T_1}(x))\indic{T_1=M^{x,y}}+g(Y_{T_1}(u),Y_{T_1}(y),Y_{T_1}(x))\indic{T_1=M^{x,v}}\\&\phantom{=\mathbb{E}\Big[}+g(Y_{T_1}(x),Y_{T_1}(y),Y_{T_1}(u))\indic{T_1=M^{u,v}}+g(Y_{T_1}(x),Y_{T_1}(v),Y_{T_1}(u))\indic{T_1=M^{u,y}}\Big].
\end{align*}
By the strong Markov property at time $T_1$ together with Corollary~\ref{C:gmart} we obtain the claimed bound.
\end{proof}

\section{Torus and Cycle - Proof of Proposition \ref{P:grid}}\label{sec:toruscycle}
\subsection{The Torus}
In this section, we consider a graph $G_n = (V_n,E_n)$ the torus  $\mathbb{T}_n$ on $n$ vertices, and we let $P_n$ be the simple random walk on $\mathbb{T}_n$.
\begin{proposition}\label{thm:resultTorus}
There exists a universal constant $C>0$ (not depending on $n$) such that for any torus $\mathbb{T}_n$ with $n\geq 9$,
\begin{align*}
\Var(\Psi) \leq C \left(\frac{1}{\sqrt{n}}+\frac{\sigma^2}{(\log n)^2}\right).
\end{align*}
\end{proposition}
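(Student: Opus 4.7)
The plan is to specialise the bound~\eqref{eqn:varPsiineq1} to the torus and then exploit the finer decomposition of $h_0$ supplied by Proposition~\ref{P:martmethod}. On $T_n$ the simple random walk is uniform, so $\pi\equiv 1/n$ and $\nu^2=\pi^*=1/n$, and in particular the prefactor $(\pi^*)^2/\nu^4$ equals $1$. Combined with $P(x,u)=\frac{1}{4}\ind_{x\sim u}$, \eqref{eqn:varPsiineq1} reduces to
\[
\Var(\Psi) \le \frac{1}{16n^2}\sum_{\substack{x\sim u\\ y\sim v}} h_0(u,v,x,y),
\]
and \eqref{eqn:martmethodhbound} bounds each $h_0(u,v,x,y)$ by a sum of eight probabilities. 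Under the relabelings $x\leftrightarrow u$ and $y\leftrightarrow v$ (each preserving the constraint and the sum), these eight terms collapse into two representatives: a \emph{double-meeting} term of the form $\Prob(T_1=M^{x,y},\,T_2=M^{u,v})$, and a \emph{meeting--absorption--meeting} term of the form $\Prob(T_1=M^{x,y},\,T_2=\tau^x,\,T_3=M^{u,v})$. The target is to show that the double-meeting sum contributes the $\sigma^2/(\log n)^2$ piece and the triple-event sum contributes the $1/\sqrt n$ piece.

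For the double-meeting term I would work in the continuous-time dual of Remark~\ref{remark:continuoustimedual} and couple the four particles with an independent $\RW(4,P_n)$ together with four independent $\operatorname{Exp}(\delta)$ absorption clocks, $\delta=p/(1-p)$. Since the two pairs $\{x,y\}$ and $\{u,v\}$ of walks, together with the four clocks, are mutually independent up to the first meeting or absorption,
\[
\Prob(T_1=M^{x,y},\,T_2=M^{u,v}) \le \Prob(E_{x,y})\Prob(E_{u,v}).
\]
On $T_n$, translational invariance and the Green's function asymptotic $G_{T_n}(u,v)=\tfrac{1}{2\pi}\log n + O(1)$ for $u\neq v$ give a uniform bound $\Prob(E_{u,v})=O(1/\log n)$. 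Using this for one factor while invoking Lemma~\ref{lemma:sigma2} ($4\sigma^2=\sum_{x,y}\pi(x)\pi(y)\Prob(E_{x,y})$) for the other already yields a bound of order $\sigma^2/\log n$; to recover the full $O(\sigma^2/(\log n)^2)$ I would then apply the strong Markov property at $M^{x,y}$ and bound the post-coalescence conditional probability of the later meeting $M^{u,v}$ by a further factor of $O(1/\log n)$, uniformly in the vertex at which $(x,y)$ coalesce. The meeting--absorption--meeting family is handled by the same coupling, but the intermediate absorption clock supplies an extra factor of order $\delta$ times a residual meeting time; after integrating against the meeting-time density and summing over $(x,u,y,v)$ this produces a universal ($\sigma^2$-independent) contribution of order $1/\sqrt n$.

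The main obstacle is establishing the \emph{uniform} bound $\Prob(E_{u,v})=O(1/\log n)$ across all $u\neq v$, including close starts with $d(u,v)=O(1)$ where the Green's function asymptotic has its largest relative error. I would treat close starts by evolving the dual for one unit of continuous time to spread the walks before invoking the asymptotic, and handle separately the small number of diagonal configurations (for example, those in which some of $\{x,u,y,v\}$ coincide) whose crude contribution fits comfortably within the $1/\sqrt n$ budget. I expect the $1/\sqrt n$ error term to reflect precisely these diagonal contributions together with the $O(1)$ remainder in the Green's function expansion on $T_n$; tracking both uniformly across the $O(n^2)$ quadruples in the sum is the most technically delicate step of the argument.
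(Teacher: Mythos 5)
There is a genuine gap: your argument hinges on a uniform bound $\Prob(E_{u,v})=O(1/\log n)$ for adjacent $u,v$, and this is false in the regime that matters. Recall $\Prob(E_{u,v})=\E[e^{-2\delta M^{X,Y}}]$ where $\delta=p/(1-p)$ and $M^{X,Y}$ is the meeting time of two independent walkers started adjacent. Two-dimensional random walk is recurrent, so for small $p$ (in particular throughout the critical window $p\asymp (n\log n)^{-1}$) the walkers meet long before either is absorbed, and $\Prob(E_{u,v})\to1$, not $O(1/\log n)$. The Green's function asymptotic $G_{T_n}(u,v)=\frac1{2\pi}\log n+O(1)$ controls the \emph{expected number} of coincidences, not the probability of meeting before an $\mathrm{Exp}(\delta)$ clock; these diverge precisely when $\delta$ is small. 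Indeed your own two inputs contradict each other: Lemma~\ref{lemma:sigma2} gives $4\sigma^2=\sum_{x,y}\pi(x)\pi(y)\Prob(E_{xy})$, so a uniform $O(1/\log n)$ bound would force $\sigma^2=O(1/\log n)$, whereas $\sigma^2\to1/4$ when $p\E(M_n)\to0$ (Proposition~\ref{prop:small_p}). Once $\Prob(E_{u,v})$ is allowed to be $\Theta(1)$, the double-meeting term gives only $O(\sigma^2)$, which is not enough.

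The quantity that \emph{is} uniformly $O(1/\log t)$ on the torus is $\Prob^{\RW(2)}_{(u,v)}(M^{X,Y}>t)$ for adjacent $u,v$ (Lemma~\ref{lemma:tailmeetTorus2}), i.e.\! the tail of the meeting time, not the probability of meeting before absorption. The paper's proof exploits this via a time-reversal/Markov decomposition of the density $f_{\boldsymbol{x}}(t)$ of the event $\{M^{X,Y}\in dt,\ M^{X,Y}<M^{X,U}\wedge M^{Y,V}\wedge Z\}$ at the midpoint $t/2$, which, after summing over the adjacent quadruples and using transitivity, produces \emph{two} factors of $\Prob^{\RW(2)}_\mu(M>t/2)$ (each $O(1/\log t)$ for $t\ge 2\sqrt n$) multiplied against the density of the meeting time from stationarity; integrating the latter against $e^{-4\delta t}$ reproduces $\sigma^2$ via Lemma~\ref{lemma:sigma2}. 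The $1/\sqrt n$ term is then not a diagonal/error budget but the probability that the first meeting occurs before time $2\sqrt n$ from a $\pi^2$-start, controlled by a union bound over Poisson jump times (Lemma~\ref{lemma:tailmeetTorus1}). So the correct split is early-meeting vs.\! late-meeting in $t$, not double-meeting vs.\! meeting--absorption--meeting, and your treatment of the latter class (claiming an extra factor $\delta$ forces $1/\sqrt n$) does not work either: $\delta$ can be arbitrarily small, so a factor of $\delta$ does not pin the contribution at $1/\sqrt n$.
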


For the proof of this proposition we will consider the continuous-time version of the dual process (see Remark~\ref{remark:continuoustimedual}), where particles move at rate $1$ in the graph, and are absorbed at rate $\delta = p/(1-p)$. Further, recall that walks move independently until they meet (or are absorbed), and so couplings with the process $\RW(m)$ of $m$ independent random walks as defined at the beginning of Section~\ref{sec:properties} will feature in our analysis, indeed the following two results about $\RW(2)$ on the torus are needed.

\begin{lemma}\label{lemma:tailmeetTorus1}
 Let $(X_t,Y_t)$ be a realisation of $\RW(2)$ on the torus in continuous time. Let $M^{X,Y}$ denote the meeting time of the two walks. Then for all $t\geq 0$,
\begin{align*}
\Prob^{\RW(2)}_{\pi^2}(M^{X,Y}\leq t)\leq (2t+1)\nu^2.
\end{align*}

\end{lemma}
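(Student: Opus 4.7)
The plan is to apply Markov's inequality to a count of ``potential meeting times'' in $[0,t]$. Since the two walks are driven by independent Poisson clocks of rate $1$ and only change position at their own clock rings, if they ever coincide during $[0,t]$ they must do so either at time $0$ or immediately after a ring of the combined (rate $2$) Poisson clock process. Writing $\tau_1 < \tau_2 < \cdots$ for these combined ring times, I would introduce
\[
N := \ind_{\{X_0 = Y_0\}} + \sum_{k \geq 1} \ind_{\{\tau_k \leq t,\; X_{\tau_k} = Y_{\tau_k}\}},
\]
so that $\{M^{X,Y} \leq t\} \subseteq \{N \geq 1\}$ and hence $\Prob^{\RW(2)}_{\pi^2}(M^{X,Y} \leq t) \leq \E[N]$ by Markov's inequality.

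The second step is to compute $\E[N]$. I would represent each walk as its initial position (with law $\pi$) composed with an i.i.d.\ sequence of $P$-steps, evaluated at the number of rings of its own clock up to time $s$; the two clocks, the two initial positions, and the two step sequences are mutually independent. Since $\tau_k$ is measurable with respect to the clocks alone, conditioning on $\tau_k = s$ leaves intact both the independence of $X_s$ and $Y_s$ and their (stationary) marginal law $\pi$. Hence $\Prob(X_{\tau_k} = Y_{\tau_k}) = \nu^2$ for every $k$, and likewise $\Prob(X_0 = Y_0) = \nu^2$. Summing,
\[
\E[N] = \nu^2 + \nu^2 \sum_{k \geq 1} \Prob(\tau_k \leq t) = \nu^2 \bigl(1 + 2t\bigr),
\]
since the number of combined clock rings in $[0,t]$ is Poisson with mean $2t$. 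This gives the claimed bound $(2t+1)\nu^2$.

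The only subtle point is justifying $\Prob(X_{\tau_k} = Y_{\tau_k}) = \nu^2$ at the random time $\tau_k$; everything else is routine. It is also worth noting that the argument does not use anything specific to the torus beyond stationarity of $\pi$ for $P$, so the identical bound in fact holds for $\RW(2,P)$ on any finite state space with reversible $P$.
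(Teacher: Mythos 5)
Your proof is correct and uses essentially the same argument as the paper: bound $\Prob(M^{X,Y}\leq t)$ by the expected number of times the walks coincide at time $0$ or at a combined clock ring, use stationarity of $\pi$ to get probability $\nu^2$ at each such time, and use the Poisson$(2t)$ distribution of the ring count. The paper conditions on the total number of jumps $N(t)=j$ and applies a union bound before averaging, while you apply Markov's inequality to the counting variable directly, but these are the same first-moment argument.
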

\begin{lemma}\label{lemma:tailmeetTorus2}
Let $(X_t,Y_t)$ be a realisation of $\RW(2)$ on the torus in continuous time. Let $x$ and $y$ be two adjacent vertices on the torus. Then there exists a universal constant $C>0$ (not depending on $n$) such that for each $t\geq e$, 
\begin{align*}
\Prob^{\RW(2)}_{(x,y)}(M^{X,Y}>t)\leq C/\log (t).
\end{align*}
\end{lemma}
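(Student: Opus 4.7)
The plan is to reduce the meeting time to a hitting time, lift the resulting walk to $\mathds{Z}^2$ via the torus covering map, and then invoke the classical Green's function estimate for two-dimensional simple random walk.

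Write $T_n = \mathds{Z}_L^2$ with $L = \sqrt n$, and set $Z_t := X_t - Y_t$. Since $X$ and $Y$ are independent continuous-time simple random walks on $T_n$ at rate $1$, $Z$ is itself a continuous-time simple random walk on $T_n$ (with jump rate $2$), started at $v := x-y$, a neighbour of the origin. The events $\{X_t = Y_t\}$ and $\{Z_t = 0\}$ coincide, so $M^{X,Y} = \tau_0^Z$. Let $\pi : \mathds{Z}^2 \to \mathds{Z}_L^2$ be the canonical covering map, and realise $Z$ as $\pi(\tilde Z)$ where $\tilde Z$ is a continuous-time simple random walk on $\mathds{Z}^2$ (at rate $2$) started at a neighbour $\tilde v$ of $0$ with $\pi(\tilde v) = v$. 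Since $\pi^{-1}(0) = L\mathds{Z}^2 \ni 0$, we get the pathwise bound $\tau_0^Z = \inf\{t : \tilde Z_t \in L\mathds{Z}^2\} \leq \tau_0^{\tilde Z}$, and hence
\[
\Prob^{\RW(2)}_{(x,y)}(M^{X,Y} > t)\;\leq\;\Prob_{\tilde v}(\tau_0^{\tilde Z} > t).
\]

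It then remains to show $\Prob_{\tilde v}(\tau_0^{\tilde Z} > t) \leq C/\log t$ for $t \geq e$ with $C$ universal. Writing $G_t(x,y) := \int_0^t p_s(x,y)\,ds$ for the truncated Green's function of $\tilde Z$, the strong Markov property at $\tau_0^{\tilde Z}$ applied to the occupation time at $0$ up to time $t$ yields $G_t(\tilde v, 0) \leq \Prob_{\tilde v}(\tau_0^{\tilde Z} \leq t)\,G_t(0,0)$, so
\[
\Prob_{\tilde v}(\tau_0^{\tilde Z} > t)\;\leq\;\frac{G_t(0,0) - G_t(\tilde v, 0)}{G_t(0,0)}.
\]
Standard estimates for simple random walk on $\mathds{Z}^2$ give $G_t(0,0) = \pi^{-1}\log t + O(1)$, while $G_t(0,0) - G_t(\tilde v, 0)$ remains bounded in $t$ (converging to the potential kernel $a(\tilde v)$, finite since $\tilde v$ is a fixed neighbour of $0$). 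The desired $O(1/\log t)$ bound follows, and combining with the covering estimate completes the proof.

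The main technical input is the $\mathds{Z}^2$ Green's function/potential kernel estimate; this is classical but requires some care to state with explicit universal constants. The reduction through the covering map, by contrast, is essentially a single line and avoids having to separately handle short-time and long-time regimes on the torus.
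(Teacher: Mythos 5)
Your proposal is correct, and the route differs from the paper's in a few places worth noting. Both arguments make the same first two reductions: dominate the torus meeting time by the $\mathds{Z}^2$ one, and pass to the difference walk hitting the origin. On the first point, the paper simply asserts that moving to $\mathds{Z}^2$ "clearly provides an upper bound," whereas you make this rigorous with the covering map $\pi:\mathds{Z}^2\to\mathds{Z}_L^2$ and the pathwise inequality $\tau_0^Z=\inf\{t:\tilde Z_t\in L\mathds{Z}^2\}\le\tau_0^{\tilde Z}$ — a cleaner justification. After the reduction, the paper discretizes: it conditions on the Poisson number $N(t)$ of jumps, handles parity by bipartiteness, and then bounds $1-\Prob_x(M(2j)>0)$ via the ratio $\E_x[M(2j)]/\E_x[M(2j)\mid M(2j)>0]$, feeding in the local CLT bound $P^{2i}(o,o)\ge c'/i$ to show the denominator grows like $\log j$. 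Your argument stays in continuous time and packages the same comparison directly into the truncated Green's function identity $G_t(\tilde v,0)\le\Prob_{\tilde v}(\tau_0\le t)\,G_t(0,0)$, which gives $\Prob_{\tilde v}(\tau_0>t)\le(G_t(0,0)-G_t(\tilde v,0))/G_t(0,0)$, and then quotes the standard asymptotics $G_t(0,0)=\pi^{-1}\log t+O(1)$ together with the boundedness of $G_t(0,0)-G_t(\tilde v,0)$ (increasing to the finite potential kernel value $a(\tilde v)$, since $p_s(0,0)\ge p_s(\tilde v,0)$). The two arguments are close in spirit — the paper's numerator/denominator ratio is a disguised discrete Green's function estimate — but your version avoids the Poisson conditioning, the parity bookkeeping, and the explicit local CLT computation, at the cost of appealing to the classical Green's-function/potential-kernel asymptotics as a black box. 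One small thing to keep in mind if you write this up in full: you should say a word about why $G_t(0,0)\gtrsim 1$ uniformly for $e\le t\le O(1)$, so that the claimed $C/\log t$ bound holds across the whole range $t\ge e$ and not only asymptotically; this is immediate from monotonicity of $G_t(0,0)$ in $t$ and $G_e(0,0)>0$, but deserves mention.
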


\begin{proof}[Proof of Proposition~\ref{P:grid} Item \ref{thm:grid2}]
To start with, note that the case $p_nn\log n \to 0$ follows immediately from Proposition~\ref{prop:small_p}, since the meeting time of two independent stationary walks is $\Theta(n \log n)$. 

Now, suppose that $p_nn\log n \to \infty$. Recall that $p_n$ is non-increasing hence $\lim_n p_n$ exists. If $\lim_n p_n>0$ then the result follows from Corollary~\ref{cor:normalApprox}. So suppose $\lim_n p_n=0$. By Theorem~\ref{thm:normalApprox} it suffices to show
\[
\frac np\left(\frac{\pi^*}{\sigma} \right)^3 + \frac{\nu^2}{p\sigma^2}\sqrt{\var({\Psi})}\to0 \quad \text{as }n\to\infty,
\]
(to ease notation, we are not writing the subindex $n$) i.e.
\begin{align}\notag
\frac1{n^2p\sigma^3}+\frac{\sqrt{\var(\Psi)}}{np\sigma^2}\to0 \quad \text{as }n\to\infty.
\end{align}
Eventually $p\le 1/2$ so by Lemma~\ref{lemma:easyVar1}, $\sigma^2\gtrsim\frac1{pt_\mathrm{hit}}\gtrsim \frac1{pn\log n}$ (see \cite[Proposition 10.13]{levin2009markov}), hence 
\[
\frac1{n^2p\sigma^3}\lesssim\sqrt{p(\log n)^3/n}\to0\quad\text{as }n\to\infty.
\]
By Proposition~\ref{thm:resultTorus} it remains to show that 
\begin{align*}
\frac{\sqrt{\frac1{\sqrt n}+\frac{\sigma^2}{(\log n)^2}}}{np\sigma^2}\le \frac1{n^{5/4}p\sigma^2}+\frac1{np\sigma\log n}\to0\quad\text{as }n\to\infty.
\end{align*}
Using again $\sigma^2\gtrsim \frac1{pn\log n}$ we have the bound
\[
\frac1{n^{5/4}p\sigma^2}+\frac1{np\sigma\log n}\lesssim \frac{\log n}{n^{1/4}}+\frac1{\sqrt{np\log n}}\to0\quad\text{as }n\to\infty,
\]
by the assumption $pn\log n\gg1$, completing the proof.
\end{proof}

We proceed to prove Proposition~\ref{thm:resultTorus}. Our proof uses the dual process.  For ease of notation in we shall write $M^{x,u}$ for $M^{Y(x),Y(u)}$ and $\tau^x$ for $\tau^{Y(x)}$, etc.

\begin{proof}[Proof of Proposition~\ref{thm:resultTorus}]
To bound $h_0$ we use Proposition~\ref{P:martmethod}. Consider two terms at the top of the right-hand side of equation~\eqref{eqn:martmethodhbound}, given by $$\mathbb{P}(T_1=M^{x,y},\,T_2=\tau^x,\,T_3=M^{u,v})+\mathbb{P}(T_1=M^{x,y},\,T_2=M^{u,v}).$$

Here we are considering the continuous-time version of the dual. Note the previous events are disjoint and establish that the meeting of $x$ and $y$ occurs before any other meeting or absorption, so, in particular, we have
\begin{align*}
\mathbb{P}(T_1=M^{x,y},\,T_2=\tau^x,\,T_3=M^{u,v})+\mathbb{P}(T_1=M^{x,y},\,T_2=M^{u,v}) \leq \Prob(T_1 = M^{x,y}).
\end{align*}
Note that up to time $T_1$ no meeting has occurred, so the four particles involved move like independent random walks up to this time,  thus it is natural to couple $(Y_t(x),Y_t(y),Y_t(u),Y_t(v))$ up to time $T_1$ with an independent random walks process $\RW(4)$ that is stopped when a meeting occurs or when an independent exponential clock of rate $4p/(1-p)$ rings (representing the first absorption time of the four walks). Let $(X_t,Y_t,U_t,V_t)$ be a $\RW(4)$ process, let $T$ be the time of the first meeting between any pair of the walks in $\RW(4)$, let $M^{X,Y}$ be the meeting time between $(X_t)$ and $(Y_t)$, and let $Z$ be an exponential random variable of parameter $4\delta$ with $\delta = p/(1-p)$, which is independent of the walks. Then we have

\begin{align*}
    \Prob(T_1 = M^{x,y}) &= \Prob_{(x,y,u,v)}^{\RW(4)}(M^{X,Y}=T, M^{X,Y}<Z) \\&\leq \Prob_{(x,y,u,v)}^{\RW(4)}(M^{X,Y}< M^{X,U} \wedge M^{Y,V} \wedge Z )
\end{align*}

Repeating the same argument to the other terms of equation~\eqref{eqn:martmethodhbound}, using reversibility of $P$, and plugging into equation~\eqref{eqn:varPsiineq1} yields
\begin{align}
&\Var(\Psi) \notag\\&\leq 4\frac{(\pi^*)^2}{\nu^4} \sum_{x,y,u,v\in V} \pi(x)P(x,u)\pi(y)P(y,v)\Prob_{(x,y,u,v)}^{\RW(4)}(M^{X,Y}\leq M^{X,U} \wedge M^{Y,V} \wedge Z )\nonumber\\
&=4 \sum_{x,y,u,v\in V} \frac{\ind{\{x\sim u\}} \ind{\{y\sim v\}}}{16n^2}\Prob_{(x,y,u,v)}^{\RW(4)}(M^{X,Y}\leq M^{X,U} \wedge M^{Y,V} \wedge Z ).\label{eqn:randomvjr82844} 
\end{align}
We proceed to bound the probability in \eqref{eqn:randomvjr82844}. We write members of $V^4$ in bold, i.e. $\boldsymbol{x}= (x,y,u,v)\in V^4$, and write $\boldsymbol{X}_t$ for $(X_t,Y_t,U_t,V_t)$. 
As previously, $M^{X,Y}$ refers to the meeting time of random walks $X$ and $Y$.

Let $\boldsymbol{x}\in V^4$. If $x \neq y$ then $\Prob^{\RW(4)}_{\bf x}(M^{X,Y} =0, M^{X,Y} \leq M^{X,U} \wedge M^{Y,V}) = 0$, so in such case we denote by $f_{\boldsymbol{x}}(t)$ the derivative of \[t\mapsto   \Prob^{\RW(4)}_{\bf x}(M^{X,Y} \leq t, M^{X,Y} \leq M^{X,U} \wedge M^{y,v}),\]
which exists since the meeting between two particles is a continuous random variable when the particle start in different locations. 

Then we can write $f_{\boldsymbol{x}}$ in terms of the four independent random walks. Indeed:
\begin{align*}
f_{\boldsymbol{x}}(t) = \frac{1}{2}\Prob^\mathrm{RW(4)}_{\boldsymbol{x}}(\{M^{X,Y}> t,   \,M^{X,U}> t,\, M^{Y,V}> t\} \cap \{X_t \sim Y_t\}),
\end{align*}
which holds because the instantaneous probability that $X_t$ and $Y_t$ meet for first time at time $t$ is exactly the probability that they do not meet up to time $t$, then at time $t$ one walker moves to the location of the other (so they have to be neighbours just prior to time $t$). The rate of jumping to an adjacent neighbour is $1/4$, giving the equation above.

Write $\mathcal E_t = \{M^{X,Y}> t,\,   M^{X,U}> t,\, M^{Y,V}> t\}$; then $f_{\boldsymbol{x}}(t)$ can be written as
\begin{align*}
f_{\boldsymbol{x}}(t) = \frac12\Prob^\mathrm{RW(4)}_{\boldsymbol{x}}(\mathcal E_t, X_t \sim Y_t).
\end{align*}
By using the Markov property and reversibility we have
\begin{align}
f_{\boldsymbol{x}}(t) &= \frac12\sum_{\boldsymbol{x'}\in V^4}\Prob^\mathrm{RW(4)}_{\boldsymbol{x}}(\mathcal E_{t/2}, \boldsymbol{X}_{t/2} = \boldsymbol{x'})\Prob^\mathrm{RW(4)}_{\boldsymbol{x'}}(\mathcal E_{t/2}, X_{t/2} \sim Y_{t/2})\nonumber\\
&= \frac12\sum_{\boldsymbol{x'}\in V^4}\Prob^\mathrm{RW(4)}_{\boldsymbol{x'}}(\mathcal E_{t/2}, \boldsymbol{X}_{t/2} = \boldsymbol{x})\Prob^\mathrm{RW(4)}_{\boldsymbol{x'}}(\mathcal E_{t/2}, X_{t/2} \sim Y_{t/2})\nonumber\\
&\leq \frac12\sum_{\boldsymbol{x'}\in V^4}\Prob^\mathrm{RW(4)}_{\boldsymbol{x'}}(M^{X,U}>t/2, M^{Y,V}>t/2, \boldsymbol{X}_{t/2} = \boldsymbol{x})\notag\\&\phantom{\leq \frac12\sum_{\boldsymbol{x'}\in V^4}}\times\Prob^\mathrm{RW(4)}_{\boldsymbol{x'}}(M^{X,Y}>t/2, X_{t/2} \sim Y_{t/2})\nonumber\\
\begin{split}&=\frac12\sum_{\boldsymbol{x'}\in V^4}\Prob^\mathrm{RW(2)}_{(x',u')}(M^{X,U}>t/2, (X_{t/2},U_{t/2}) = (x,y))\\
&%
\phantom{ =\frac12\sum_{\boldsymbol{x'}\in V^4}}
\times \Prob^\mathrm{RW(2)}_{(y',v')}( M^{Y,V}>t/2, (Y_{t/2}, V_{t/2})= (y,v))\\
&\phantom{ =\frac12\sum_{\boldsymbol{x'}\in V^4}} 
\times \Prob^\mathrm{RW(2)}_{(x',y') }(M^{X,Y}>t/2, X_{t/2} \sim Y_{t/2}),\end{split}\label{eqn:randomv8v84811a}
\end{align}
where $\Prob^\mathrm{RW(2)}$ is the probability measure associated with two independent random walks.

Denote by $\mathcal F_t$ the event $\{M^{X,Y}>t, X_t \sim Y_t\}$, then from \eqref{eqn:randomv8v84811a} we have
\begin{align*}
\sum_{\boldsymbol{x}\in V^4}\frac{\ind{\{x\sim u\}} \ind{\{y\sim v\}}}{16n^2}f_{\boldsymbol{x}}(t)&\leq   \frac{2}{64n^2}  \sum_{\boldsymbol{x'}\in V^4}\Prob^\mathrm{RW(2)}_{(x',u') }(\mathcal F_{t/2})\Prob^\mathrm{RW(2)}_{(y',v') }(\mathcal F_{t/2})\Prob^\mathrm{RW(2)}_{(x',y') }(\mathcal F_{t/2})\\
&= \frac{1}{32}\sum_{(x',y')\in V^2} \Prob^\mathrm{RW(2)}_{(x',\pi) }(\mathcal F_{t/2})\Prob^\mathrm{RW(2)}_{(y',\pi) }(\mathcal F_{t/2})\Prob^\mathrm{RW(2)}_{(x',y') }(\mathcal F_{t/2}).
\end{align*}
Since the torus is transitive, we have that $\Prob^\mathrm{RW(2)}_{(x',\pi)}(\mathcal F_{t/2})$ does not depend on $x'$, so $\Prob^\mathrm{RW(2)}_{(x',\pi)}(\mathcal F_{t/2}) = \Prob^\mathrm{RW(2)}_{(\pi,\pi)}(\mathcal F_{t/2})$, concluding from the previous equation that 
\begin{align}
\sum_{\boldsymbol{x}\in V^4}\frac{\ind{\{x\sim u\}} \ind{\{y\sim v\}}}{16n^2}f_{\boldsymbol{x}}(t) 
&\leq  \frac{1}{32}\sum_{(x',y')\in V^2} \Prob^\mathrm{RW(2)}_{(\pi,\pi)}(\mathcal F_{t/2})\Prob^\mathrm{RW(2)}_{(\pi,\pi)}(\mathcal F_{t/2})\Prob^\mathrm{RW(2)}_{(x',y')}(\mathcal F_{t/2})\nonumber\\
&= \frac{n^2}{32}\Prob^\mathrm{RW(2)}_{(\pi,\pi)}(\mathcal F_{t/2})^3.\label{eqn:randomudyg131}
\end{align}
Using reversibility again we obtain
\begin{align*}
\Prob^\mathrm{RW(2)}_{(\pi,\pi)}(\mathcal{F}_{t/2}) &= \sum_{x',y' \in V} \frac{1}{n^2}\Prob^\mathrm{RW(2)}_{(x',y')}(\mathcal F_{t/2}) \\
&= \sum_{x',y' \in V}\sum_{x,y \in V}\ind{\{x\sim y\}} \frac{1}{n^2}\Prob^\mathrm{RW(2)}_{(x',y')}(M^{X,Y}>t/2, X_{t/2} = x, Y_{t/2} = y)\\
& =\sum_{x',y' \in V}\sum_{x,y \in V}\ind{\{x\sim y\}} \frac{1}{n^2}\Prob^\mathrm{RW(2)}_{(x,y)}(M^{X,Y}>t/2, X_{t/2} = x', Y_{t/2} = y') \\
&= \sum_{x,y \in V}\ind{\{x\sim y\}} \frac{1}{n^2}\Prob^\mathrm{RW(2)}_{(x,y)}(M^{X,Y}>t/2)  \\&= \frac{4}{n}\sum_{x,y}\mu(x,y)\Prob^\mathrm{RW(2)}_{(x,y)}(M^{X,Y}>t/2).
\end{align*}
By denoting $\Prob^\mathrm{RW(2)}_{\mu}(\cdot) = \sum_{x,y}\mu(x,y)\Prob^\mathrm{RW(2)}_{(x,y)}(\cdot)$, and substituting the previous equality into \eqref{eqn:randomudyg131} we have
\begin{align}
 \sum_{\boldsymbol{x}\in V^4}\frac{\ind{\{x\sim u\}} \ind{\{y\sim v\}}}{16n^2}f_{\boldsymbol{x}}(t)
&\leq \frac{1}{8} \Prob^\mathrm{RW(2)}_{\mu}(M^{X,Y}>t/2)^2 \Prob^\mathrm{RW}_{(\pi,\pi)}(\mathcal F_{t/2}).\label{eqn:importantdensitytorus}
\end{align}

Going back to equation~\eqref{eqn:randomvjr82844}, by \eqref{eqn:importantdensitytorus} we have
\begin{align*}
\Var(\Psi)&\le 4 \sum_{x,y,u,v\in V} \frac{\ind{\{x\sim u\}} \ind{\{y\sim v\}}}{16n^2}\Prob_{(x,y,u,v)}^{\RW(4)}(M^{X,Y}\leq M^{X,U} \wedge M^{Y,V} \wedge Z )\\
&\le4 \sum_{x,y,u,v\in V} \frac{\ind{\{x\sim u\}} \ind{\{y\sim v\}}}{16n^2} \Prob_{(x,y)}^{\RW(2)}(M^{X,X}\leq 2\sqrt n)\\&\phantom{\le}+4 \sum_{x,y,u,v\in V} \frac{\ind{\{x\sim u\}} \ind{\{y\sim v\}}}{16n^2} \Prob_{(x,y,u,v)}^{\RW(4)}(2\sqrt n\le M^{X,Y}\leq M^{X,U} \wedge M^{Y,V} \wedge Z )\\
&=\frac{1}{4}\Prob^{\RW(2)}_{\pi^2}(M^{X,Y}\le 2\sqrt n) +4 \sum_{x,y,u,v\in V} \frac{\ind{\{x\sim u\}} \ind{\{y\sim v\}}}{16n^2}\int_{2\sqrt n}^\infty f_{\boldsymbol{x}}(t)e^{-4\delta t}\,dt\\
&\leq \cdot\frac{5}{4\sqrt n} + 4\int_{2\sqrt n}^{\infty} \frac{1}{8} \Prob^{\RW(2)}_{\mu}(M^{X,Y}>t/2)^2\,  \Prob^{\RW(2)}_{(\pi,\pi)}(\mathcal F_{t/2})e^{-4\delta t}\,dt\\
&\leq \frac{5}{4\sqrt n} + c\int_{2\sqrt n}^{\infty} (\log n)^{-2}\,  \Prob^{\RW(2)}_{(\pi,\pi)}(\mathcal F_{t/2})e^{-4\delta t}\,dt,
\end{align*}
for a universal constant $c>0$. The bound on $\Prob^\mathrm{RW(2)}_{\pi^2}(M^{X,Y}\le 2\sqrt n)$ follows from Lemma~\ref{lemma:tailmeetTorus1} and the bound on $\Prob^\mathrm{RW(2)}_{\mu}(M^{X,Y}>t/2)^2$ is from Lemma~\ref{lemma:tailmeetTorus2}. 
Now recall that $\frac{1}{8}\Prob^\mathrm{RW}_{(\pi,\pi)}(\mathcal F_{t/2})$ is the density of $M^{X,Y}$ at time $t/2$ when both particles start independently with distribution $\pi$, which we shall denote by $g(t/2)$. Then,
\begin{align*}
&\var(\Psi)\\&\le \frac{5}{4\sqrt n} + 8c\int_{2\sqrt n}^{\infty} (\log n)^{-2}\,  g(t/2)e^{-4\delta t}\,dt\le \frac{5}{4\sqrt n} + 8c\int_{0}^{\infty} (\log n)^{-2}\,  g(s)e^{-2\delta s}\,ds\\
&\leq\frac{5}{4\sqrt n} + 8c(\log n)^{-2}\,\mathbb{E}_{(\pi,\pi)}^\mathrm{RW(2)}\big[e^{-2\delta M^{X,Y}}\big]= \frac{5}{4\sqrt n} + 8c(\log n)^{-2}\,\Prob_{\pi^2}(M^{XY}< Z')\\
&\leq \hat c\left(\frac1{\sqrt n}+\sigma^2(\log n)^{-2}\right),
\end{align*}
where $Z'$ is an exponential random variable of parameter $2\delta$ (independent of everything), $c>0$ and $\hat c>0$ are universal constants, and the last inequality holds due to Lemma~\ref{lemma:sigma2}.
\end{proof}
We now complete this section by presenting the proofs of Lemmas~\ref{lemma:tailmeetTorus1} and~\ref{lemma:tailmeetTorus2}.
\begin{proof}[Proof of Lemma~\ref{lemma:tailmeetTorus1}] To ease notation, and since there are no other processes in the proof, we omit the superscript $\RW(2)$ in the probability measures. Now, denote by $N(t)$ the total number of jumps made by the two walks by time $t$. Then we claim that for each $t\ge0$ and $j\in\mathbb{N},$
\begin{align*}
\Prob_{\pi^2}(M^{X,Y}\leq t\mid N(t) = j) \leq (j+1)\nu^2.
\end{align*}
To see this, denote by $T_1,\ldots, T_j$ the jump times. Then at these times the distribution of the locations of the walks is exactly the stationary distribution by the strong Markov property (and so the probability they are equal is $\nu^2=\sum_{x\in V} \pi(x)^2$). Thus
\begin{align*}
\Prob_{\pi^2}(M^{X,Y}\leq t\mid N(t) = j) \leq \Prob_{\pi^2}(X_0=Y_0)+\sum_{i=1}^j \Prob_{\pi^2}(X_{T_i}=Y_{T_i}) = (j+1)\nu^2.
\end{align*}
We conclude that $\Prob_{\pi^2}(M^{X,Y}\leq t)\leq \nu^2(\E_{\pi^2}(N(t)+1)) = \nu^2(2t+1)$ since $N(t)$ has Poisson distribution of mean $2t$.
\end{proof}
\begin{proof}[Proof of Lemma~\ref{lemma:tailmeetTorus2}]
 As in the proof of Lemma~\ref{lemma:tailmeetTorus1} we omit the superscript $\RW(2)$ in the probability measure.
The first step is to assume that the random walks move on $\mathbb{Z}^2$ instead of $\mathbb{T}_n$, which clearly provides an upper-bound on the quantity of interest. Recall that $x=X_0$ and $y=Y_0$ are adjacent vertices in $\Z^2$, and by transitivity we can assume that $y=(0,0) \in \Z^2$ and $x=(1,0)\in \Z^2$. Also, by further using the symmetry of $\Z^2$, we can assume that only one walk moves but at rate 2, while the other is fixed. Then, denote by $Z_t$ a simple random walk on $\mathbb{Z}^2$ moving at rate $2$, let $o$ be the origin $(0,0) \in \mathbb{Z}^2$ and $x = (1,0)\in \Z^2$, and let $H_o$ be the hitting time of $o$ by the random walk $Z_t$. By our previous discussion, we deduce that
\begin{align*}
\Prob_{(x,y)}(M^{X,Y}>t) \leq \Prob_{x}(H_o>t),
\end{align*}
 We condition on the number of jumps $N(t)$ the walk $Z_t$ performs up to time $t$ which is distributed as a Poisson random variable with mean $2t$. Denote by $\widehat H_o$ the number of jumps until $Z_t$ hit vertex $o$, and by $M(j)$ the number of times the random walk jumps into vertex $o$ up to (and including) the $j$-th jump. Then
\begin{align}
\Prob_x(H_o>t\mid N(t)=j) = \Prob_x(\widehat H_o > j) = 1-\Prob_x(M(j)>0).
\end{align}
Note that conditioning on $\{N(t) = 2j\}$ or $\{N(t) = 2j+1\}$ is the same since $\mathbb{Z}$ is bipartite, i.e.\! $\Prob_x(M(2j)>0)=\Prob_x(M(2j+1)>0)$, so without loss of generality we focus just on $\Prob_x(M(2j)>0)$. 

We claim that for $j\geq 1$,
\begin{align}\label{eqn:randomjgurhu1931a}
1-\Prob_x(M(2j)>0) \leq \frac{1}{2(1+c'\log(j))},
\end{align}
for some constant $c'>0$.
By using this we have
\begin{align*}
\Prob_{x}(H_o>t) &= \sum_{j=0}^{\infty} (1-\Prob_{x}(M(2j)>0))\Prob_x(N(t)\in \{2j,2j+1\})\\
&\leq \frac12\sum_{j=0}^{\infty} \frac{\Prob_x(N(t)\in \{2j,2j+1\})}{1+c'\log(j+1)} \\
&\le \frac12\Prob_x(N(t)\in\{0,1\})+\frac12\sum_{j=2}^\infty\frac{\Prob_x(N(t)=j)}{1+c'\log((j+1)/2)}\\
&\le \frac12\Prob_x(N(t)\in\{0,1\})+\frac12\sum_{j=2}^{\sqrt t}\Prob_x(N(t)=j)+\frac12\sum_{j=\sqrt t+1}^\infty \frac{\Prob_x(N(t)=j)}{1+c'\log(\sqrt t/2 + 1)}\\
&\le \frac12\Prob_x(N(t)\le \sqrt t)+\frac1{2(1+c'\log(\sqrt t/2 + 1))}
\leq \frac{C}{\log t}
\end{align*}
for some $C>0$, using in the last bound Chebyshev's inequality, for example.
 
 To verify equation~\eqref{eqn:randomjgurhu1931a} we start by noting that
\begin{align*}
\Prob_x(M(2j)>0) = \frac{\E_x(M(2j))}{\E_x(M(2j)|M(2j)>0)}.
\end{align*}
For the denominator, we can assume that $Z_t$ hit vertex $o$ for first time at time-step  $1$, obtaining the following upper bound $\E_x(M(2j)\mid M(2j)>0)\leq \E_o(M(2j-1)) = \sum_{i=0}^{2j-1}P^i(o,o)$.

For the numerator, note that $\E_x(M(2j)) = \E_z(M(2j))$ for any $z \sim o$, by the symmetry of $\mathbb{Z}^2$ (recall that $x\sim o$), and so
\begin{align*}
\E_x(M(2j)) = \frac{1}{d}\sum_{z:z\sim o} \E_z(M(2j)) =\sum_{z:z\sim o} P(o,z)\E_z(M(2j)) = \sum_{i=1}^{2j+1} P^i(o,o).
\end{align*}
Then
\begin{align*}
1-\Prob_{x}(M(j)>0) &\leq 1- \frac{\sum_{i=1}^{2j+1} P^i(o,o)}{\sum_{i=0}^{2j-1}P^i(o,o)}= \frac{1-P^{2j}(o,o)-P^{2j+1}(o,o)}{\sum_{i=0}^{2j-1}P^i(o,o)}.
\end{align*}
Note that $P^{2j}(o,o)+P^{2j+1}(o,o) \leq 1/2$, so 
\begin{align*}
1-\Prob_{x}(M(2j)>0)\leq \frac{1}{2\sum_{i=0}^{2j-2}P^i(o,o)} = \frac{1}{2\sum_{i=0}^{j-1}P^{2i}(o,o)}\leq \frac{1}{2(1+ c'\log(j))},
\end{align*}
where $c'>0$ is a constant independent of $i$. The last equality follows since  $P^{2i}(o,o)\geq \frac{c'}{i}$ for all $i\geq 1$ where $c'>0$ is independent of the number of vertices and $i$. To see this, note that for a random walk on $\mathbb{Z}^2$, $P^{2i}(o,o)\geq \frac{1}{\pi i}-\frac{c}{i^2}$ by Theorem~\ref{thm:LCLT}, where $c>0$ is independent of $i$; so by choosing $c'$ small enough we have $P^{2i}(o,o)\geq \frac{c'}{i}$, which gives
\[
\sum_{i=0}^{j-1}P^{2i}(o,o) \geq 1+ \sum_{i=1}^{j-1}P^{2i}(o,o) \geq  1+ \sum_{i=1}^{j-1} c'/i \geq 1 + c'\log j.\qedhere\]
\end{proof}

\subsection{The Cycle $C_n$}Our interest in this section is on the noisy voter model on the cycle\footnote{$C_n$ has vertex set $[n]$ and edge set $\{\{1,2\},\{2,3\},\ldots,\{n-1,n\},\{n,1\}\}$} $C_n$ with $P_n$ corresponding to the simple random walk. Specifically, we shall prove Item~\ref{thm:grid1} of Proposition~\ref{P:grid}. 
We first obtain the precise value of the variance, $\sigma^2=\var(S_n)$. 

\begin{lemma}[Cycle variance]\label{L:cyclevar} 
For each $p\in(0,1)$ set $\theta=\frac{1-\sqrt{p(2-p)}}{1-p}\in(0,1)$. Then
\[
\sigma^2=\frac1{4n}\left(1+\frac{2\theta(1-\theta^{n-1})}{(1-\theta)(1+\theta^n)}\right).
\]
\end{lemma}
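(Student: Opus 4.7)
The plan is to express $\sigma^2$ via Lemma~\ref{lemma:sigma2}, exploit the transitivity of $C_n$ to reduce to a single one-dimensional quantity, solve an elementary linear recurrence to identify it, and sum a geometric series.

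First, since $\pi(x)=1/n$ and $C_n$ is vertex-transitive, the probability $\Prob(E_{xy})$ depends only on the cyclic distance $k=(y-x)\bmod n$. Writing $u_k:=\Prob(E_{0,k})$, Lemma~\ref{lemma:sigma2} gives
\[
\sigma^2=\frac{1}{4n^2}\sum_{x,y\in V}\Prob(E_{xy})=\frac{1}{4n}\sum_{k=0}^{n-1}u_k.
\]
So it suffices to compute $\sum_{k=0}^{n-1}u_k$ in closed form.

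Next, I will identify $u_k$ using the continuous-time dual (Remark~\ref{remark:continuoustimedual}). Up to $M^{Y(0),Y(k)}\wedge\tau^{Y(0),Y(k)}$ the two particles move as independent rate-$1$ simple random walks $X_1,X_2$ on $C_n$, and the first of the two independent absorption clocks rings at an independent time $Z\sim\mathrm{Exp}(2\delta)$ with $\delta=p/(1-p)$. Thus $u_k=\Prob^{\RW(2)}_{(0,k)}(M^{X_1,X_2}<Z)$. The difference $D_t=X_1(t)-X_2(t)\bmod n$ is a continuous-time simple random walk on $\Z/n\Z$ moving at total rate $2$, and meeting corresponds to $D$ hitting $0$. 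Embedding in discrete time, each event is a walk step with probability $\tfrac{1}{1+\delta}$ and an absorption with probability $\tfrac{\delta}{1+\delta}$, so first-step analysis yields, for $k=1,\dots,n-1$,
\[
u_k=\frac{1}{2(1+\delta)}\bigl(u_{k-1}+u_{k+1}\bigr),\qquad u_0=u_n=1,
\]
using the cycle identification $u_n=u_0$.

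The recurrence $u_{k+1}-2(1+\delta)u_k+u_{k-1}=0$ has characteristic roots whose product is $1$; writing the smaller root as $\theta$, a short calculation with $1+\delta=1/(1-p)$ gives $\theta=(1-\sqrt{p(2-p)})/(1-p)\in(0,1)$, matching the statement. The general solution $u_k=A\theta^k+B\theta^{-k}$ with $u_0=u_n=1$ yields
\[
A=\frac{1}{1+\theta^n},\qquad B=\frac{\theta^n}{1+\theta^n},\qquad u_k=\frac{\theta^k+\theta^{n-k}}{1+\theta^n}.
\]
Finally, summing two geometric series,
\[
\sum_{k=0}^{n-1}u_k=\frac{1}{1+\theta^n}\left(\frac{1-\theta^n}{1-\theta}+\theta\,\frac{1-\theta^n}{1-\theta}\right)=\frac{(1+\theta)(1-\theta^n)}{(1-\theta)(1+\theta^n)},
\]
and a one-line algebraic identity rewrites this as $1+\dfrac{2\theta(1-\theta^{n-1})}{(1-\theta)(1+\theta^n)}$, giving the claim after dividing by $4n$.

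The only conceptually delicate step is the reduction $u_k=\Prob_k(H_0^D<Z)$ for the difference walk, which relies on the fact (already used in the proof of Lemma~\ref{lemma:easyVar1}) that before meeting or absorption the two dual particles are exactly distributed as independent rate-$1$ walks and the first absorption time is independent $\mathrm{Exp}(2\delta)$; everything after that is a routine linear recurrence and geometric summation.
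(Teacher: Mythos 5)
Your proposal is correct and is essentially the same computation as the paper's. Both proofs reduce via Lemma~\ref{lemma:sigma2} and cyclic symmetry to identifying $u_k=\Prob(E_{0,k})=(\theta^k+\theta^{n-k})/(1+\theta^n)$ and then summing a geometric series; you obtain $u_k$ by first-step analysis in the continuous-time dual and explicit solution of the resulting two-point boundary recurrence, whereas the paper works in discrete time with an independent $\mathrm{Geom}(p)$ absorption clock and extracts the same identity from optional stopping of the martingale $\theta^{X_t}(1-p)^t$ together with the symmetry $k\leftrightarrow n-k$ --- but that martingale simply encodes the characteristic roots $\theta,\theta^{-1}$ of your recurrence, so the two derivations are interchangeable.
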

\begin{proof}
We appeal to martingale arguments. By Lemma~\ref{lemma:sigma2}, $\sigma^2=\frac{1}{4n^2}\sum_{x,y\in V}\Prob(E_{xy})$ and recall that $E_{x,y}=\Prob(M^{Y(x),Y(y)}< \tau^{Y(x),Y(y)})$. Here we are considering the discrete-time dual process. For $t\in\mathbb{N}_0$, set $X_t$ to be the clockwise distance between the two particles $Y(x)$ and $Y(y)$ initialised at $x$ and $y$ in the dual process just after the $t^\mathrm{th}$ time that one of these two particles is selected to move/be absorbed (we are essentially looking at the dual when particles $Y(x)$ and $Y(y)$ move, ignoring the other particles in the process). Note that $(X_t)$ behaves like a discrete-time simple random walk on the integers. Let $T_{0,n}$ be the hitting time of $\{0,n\}$ by $(X_t)$ and $G$ an independent Geometric random variable with parameter $p$. Then we deduce that $\sigma^2=\frac14\sum_{a,b\in\{0,\ldots,n-1\}}n^{-2}\mathbb{P}^\mathrm{RW}_{|a-b|}(T_{0,n}<G)$, where $\mathbb{P
}^\mathrm{RW}$ is the probability measure associated with $(X_t)$ (a simple random walk on $\mathbb{Z}$). We now consider the bounded process $M_t:=\theta^{X_t}(1-p)^t$. Since $\theta+\theta^{-1}=2/(1-p)$, this is a martingale up to $T_{0,n}$. Thus by the optional stopping theorem, for each $k\in\{1,\ldots,n-1\}$,
\[
\theta^{k}=\mathbb{E}^\mathrm{RW}_{k}\left[(1-p)^{T_{0,n}}\left(\indic{X_{T_{0,n}}=0}+\theta^n\indic{X_{T_{0,n}}=n}\right)\right].
\]
By symmetry 
\begin{align*}
&\mathbb{E}^\mathrm{RW}_{k}\left[(1-p)^{T_{0,n}}\left(\indic{X_{T_{0,n}}=0}+\theta^n\indic{X_{T_{0,n}}=n}\right)\right]\\&=\mathbb{E}^\mathrm{RW}_{n-k}\left[(1-p)^{T_{0,n}}\left(\indic{X_{T_{0,n}}=n}+\theta^n\indic{X_{T_{0,n}}=0}\right)\right], 
\end{align*}
thus 
\begin{align*}
&\theta^k+\theta^{n-k}\\&=\mathbb{E}^\mathrm{RW}_{k}\left[(1-p)^{T_{0,n}}\left(\indic{X_{T_{0,n}}=0}+\theta^n\indic{X_{T_{0,n}}=n}\right)\right]\\&\phantom{=}+\mathbb{E}^\mathrm{RW}_{n-k}\left[(1-p)^{T_{0,n}}\left(\indic{X_{T_{0,n}}=0}+\theta^n\indic{X_{T_{0,n}}=n}\right)\right]\\
&=\mathbb{E}^\mathrm{RW}_{k}\left[(1-p)^{T_{0,n}}\left(1+\theta^n\right)\right],
\end{align*}
i.e.\! \begin{align}\label{eq:ERWk}\mathbb{E}^\mathrm{RW}_{k}\left[(1-p)^{T_{0,n}}\right]=(\theta^k+\theta^{n-k})(1+\theta^n)^{-1}.\end{align}
To relate this to $\sigma^2$, note that we can write
\begin{align*}
\sigma^2=\frac14\sum_{a,b\in\{0,\ldots,n-1\}}n^{-2}\mathbb{E}^\mathrm{RW}_{|a-b|}\left[(1-p)^{T_{0,n}}\right]&=\frac14 n^{-2}\left(n\cdot 1+n\sum_{k=1}^{n-1}\mathbb{E}^\mathrm{RW}_{k}\left[(1-p)^{T_{0,n}}\right]\right)
\end{align*}
and so plugging in \eqref{eq:ERWk} and simplifying we obtain
\[
\sigma^2=\frac1{4n}\left(1+\frac{2\theta(1-\theta^{n-1})}{(1-\theta)(1+\theta^n)}\right).\qedhere
\]
\end{proof}

We introduce an auxiliary process which, up to a time change, is almost identical to the dual process on the cycle $C_n$.
Given $n\in\mathbb{N}$ and $p\in(0,1)$, we define CAB ({\bf c}oalescing walk with {\bf ab}sorption) as a process $({\bf X}_t)_{t\in\mathbb{N}_0}=(X_t,U_t,V_t,Y_t)_{t\in\mathbb{N}_0}$ taking values in $(C_n\cup \Delta)^4$ where $\Delta$ a cemetery state. The process starts at time $0$ from a state in $(C_n)_4$ and we say that all 4 particles are alive. At each time $t\in\mathbb{N}$, we choose uniformly an alive particle and this particle with probability $p$ gets absorbed into the cemetery state and dies, otherwise the particle jumps clockwise or anti-clockwise on the cycle with equal probability. If this jump results in particles meeting, these particles coalesce into a single alive particle (i.e. the number of alive particles decreases by 1). Any coalesced particles evolve in the same way at the same times, including having the same absorption times into the cemetery state.

The CAB process is similar to the dual process except time is sped-up, we only observe the movement of four particles, and the two stubborn vertices considered in the dual process are glued together into a single cemetery state $\Delta$. To see this, note that if we consider the dual process, then when a particle $Y(z)$ coalesces with one of the other three particles, say $Y(x)$, then $Y(z)$ follows the trajectory of $Y(x)$ (this does not change the distribution of the dual process but allows us to ignore all particles different from $Y(x),Y(y),Y(u)$ and $Y(v)$). As a result probabilities of statements involving orders of meetings/absorption times are identical in both processes.

For a CAB process $({\bf X}_t)_{t\in\mathbb{N}_0}=(X_t,U_t,V_t,Y_t)_{t\in\mathbb{N}_0}$, we use $\tau^A$ to denote the absorption time into $\Delta$ of particle $A\in\{X,U,V,Y\}$, $M^{A,B}$ to denote meeting time of particles $A,B\in\{X,U,V,Y\}$ and $\tau:=\min\{\tau^X,\tau^U,\tau^V,\tau^Y,M^{X,U},M^{V,Y}\}$. We also use $\Prob^{CAB}_{\bf x}$ to denote probability with respect to the CAB process when the four particle start from ${\bf x} \in C_n^4$. Additionally, for $a,b\in C_n$ we define $d(a,b)$ to be the clockwise distance from $a$ to $b$ and say that $(x_1,x_2,x_3,x_4)\in(C_n)_4$ is \emph{clockwise-oriented} if and only if for $i\in\{1,2\}$ and $j\notin\{i,i+1\}$, $d(x_i,x_{i+1})<d(x_i,x_j)$.

We present a refinement of Proposition~\ref{P:martmethod} to be used for the cycle, presented in terms of a CAB process. This provides us with the probabilities we need to control to bound the covariance term. Recall the function $h$ defined in \eqref{eq:hasCov}.
\begin{corollary}[Cycle covariance]\label{C:cyclecov}
Let  ${\bf x}=(x,u,v,y)\in (C_n)_4$ be clockwise-oriented. Let $({\bf X}_t)_{t\ge0}=(X_t,U_t,V_t,Y_t)_{t\ge0}$ be a CAB process initialised from $(x,u,v,y)$.  Then
\begin{align*}
&h_0(u,v,x,y)\\&\le \Prob_{\bf x}^{\mathrm{CAB}}\big(M^{X,Y}<\tau\big)
+\Prob_{\bf x}^{\mathrm{CAB}}\big(M^{U,V}<\tau^U=\min\{\tau,M^{X,Y}\}\big).
\end{align*}
\end{corollary}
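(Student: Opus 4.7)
The plan is to start from Proposition~\ref{P:martmethod}, which bounds $h_0(u,v,x,y)$ by a sum of eight pairwise disjoint probabilities indexed by the identities of $T_1,T_2,T_3$ in $\mathcal{T}^{u,v,x,y}$. The key geometric fact I would invoke is that particles in the dual (equivalently, the CAB process) on $C_n$ preserve their cyclic order: since each move is by a single edge, two particles cannot swap positions without first meeting. Given the clockwise orientation of $(x,u,v,y)$, this forces the four terms in which $T_1\in\{M^{x,v},M^{u,y}\}$ to vanish; for instance, if $M^{x,v}$ were the smallest element of $\mathcal{T}^{u,v,x,y}$ then when $X$ and $V$ first coincide either $U$ or $Y$ must lie at the same vertex in the cyclic order, so $M^{x,u}$, $M^{v,y}$, or $M^{x,y}$ would have occurred no later, a contradiction to $T_1=M^{x,v}$.

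Of the four remaining (pairwise disjoint) terms I would classify them as follows. The events $\{T_1=M^{x,y},T_2=\tau^x,T_3=M^{u,v}\}$, $\{T_1=M^{x,y},T_2=M^{u,v}\}$, and $\{T_1=M^{u,v},T_2=M^{x,y}\}$ all force $M^{x,y}$ to be strictly smaller than every absorption time $\tau^x,\tau^u,\tau^v,\tau^y$ and every meeting $M^{x,u}, M^{v,y}$; thus each lies inside $\{M^{X,Y}<\tau\}$, and being pairwise disjoint their probabilities sum to at most $\Prob_{\bf x}^{\mathrm{CAB}}(M^{X,Y}<\tau)$. The surviving event $\{T_1=M^{u,v},T_2=\tau^u,T_3=M^{x,y}\}$ forces $M^{u,v}<\tau^u<M^{x,y}$ with $\tau^u$ strictly below every other element of $\mathcal{T}^{u,v,x,y}\setminus\{M^{u,v}\}$; in particular $\tau^u=\tau$ (the reverse inequality $\tau\le\tau^u$ is automatic) and $\tau^u<M^{x,y}$, so this event is contained in $\{M^{U,V}<\tau^U=\min\{\tau,M^{X,Y}\}\}$.

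Summing the four surviving contributions gives
\[
h_0(u,v,x,y)\le\Prob_{\bf x}^{\mathrm{CAB}}(M^{X,Y}<\tau)+\Prob_{\bf x}^{\mathrm{CAB}}(M^{U,V}<\tau^U=\min\{\tau,M^{X,Y}\}).
\]
To recover the $\wedge$ in the statement, I would then observe that $\{M^{U,V}<\tau^U=\min\{\tau,M^{X,Y}\}\}\subseteq\{M^{U,V}<\tau\}$, because on the former event $\tau^U=\tau$ and hence $M^{U,V}<\tau^U=\tau$; consequently the second probability is dominated by $\Prob_{\bf x}^{\mathrm{CAB}}(M^{U,V}<\tau)$ and therefore coincides with the minimum of the two. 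The main obstacle is the cyclic-order preservation step, which is geometrically transparent but must be articulated carefully to rule out $T_1\in\{M^{x,v},M^{u,y\}}$; the remainder is bookkeeping. The hypothesis $d(y,x)\ge d(u,v)$ does not appear to enter this corollary and is presumably imposed to facilitate later estimates of $\Prob_{\bf x}^{\mathrm{CAB}}(M^{X,Y}<\tau)$ in applications.
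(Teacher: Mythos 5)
Your proposal is correct and follows essentially the same route as the paper's (very terse) proof: start from Proposition~\ref{P:martmethod}, kill the $T_1\in\{M^{x,v},M^{u,y}\}$ terms by the order-preservation of particles on the cycle, and absorb the three surviving $M^{x,y}$-first/second events into $\{M^{X,Y}<\tau\}$ and the one remaining event into $\{M^{U,V}<\tau^U=\min\{\tau,M^{X,Y}\}\}$, with the $\wedge$ following because the latter event is contained in $\{M^{U,V}<\tau\}$. One small slip: in ruling out $T_1=M^{x,v}$, the particle trapped on the shrinking arc forces one of $M^{x,u}$, $M^{u,v}$, $M^{x,y}$, or $M^{v,y}$ to occur no later (you omitted $M^{u,v}$), but since all four lie in $\mathcal{T}^{u,v,x,y}$ the contradiction with $T_1=M^{x,v}$ still holds, so the argument is unaffected.
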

\begin{proof}
Proposition~\ref{P:martmethod} gives a bound on $h_0(u,v,x,y)$ in terms of the dual process, the probabilities in this bound being identical to probabilities for the CAB process, as already remarked. Further, due to the structure of the cycle, and the clockwise orientation  of ${\bf x}$, certain probabilities are zero (those involving $T_1=M^{x,v}$ or $T_1=M^{u,y}$). The remaining probabilities can be easily bounded to give the claimed result.
\end{proof}
We control the probabilities in Corollary~\ref{C:cyclecov} using the following two propositions.
\begin{proposition}\label{P:cyclemain}Let  ${\bf x}=(x,u,v,y)\in (C_n)_4$ be clockwise-oriented and distinct with $x\sim u$, $v\sim y$ and $d:=d(y,x)\ge d(u,v)$. Let $({\bf X}_t)_{t\ge0}=(X_t,U_t,V_t,Y_t)_{t\ge0}$ be a CAB process initialised from $(x,u,v,y)$.  Then there exists a universal constant $C>0$ such that for any $p\in(0,1)$ and $n\ge 8$, 
\[\Prob^\mathrm{CAB}_{\bf x}\big(M^{X,Y}< \tau\big) \leq Cd^{-2}\left(\exp(-Cd\sqrt p)+d^{-1}\right).\]
\end{proposition}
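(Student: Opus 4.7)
The plan is to pass to continuous time, decouple absorption from motion as an independent exponential clock, and then bound a meeting probability for four independent random walks on the cycle by combining a martingale (Laplace-transform) estimate with two gambler's-ruin-type constraints.

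Step 1 (Continuous time and absorption decoupling). Since only the ordering of events matters, I would work with the continuous-time version of CAB in which each alive particle attempts a $\pm 1$ step at rate $1$ and is absorbed into $\Delta$ at rate $\delta := p/(1-p)$, independently. The four absorption times $\tau^X,\tau^U,\tau^V,\tau^Y$ are then i.i.d.\! exponentials of rate $\delta$, independent of the particle motions, so
\[
\Prob^{\mathrm{CAB}}_{\bf x}(M^{X,Y}<\tau)=\mathds{E}^{\mathrm{RW}(4)}_{\bf x}\bigl[e^{-4\delta M^{X,Y}}\indic{M^{X,Y}< M^{X,U}\wedge M^{V,Y}}\bigr],
\]
where now $X,U,V,Y$ are four independent continuous-time simple random walks on $C_n$ (no absorption).

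Step 2 (Gambler's ruin for the two side meetings). The event $\{M^{X,Y}< M^{X,U}\wedge M^{V,Y}\}$ asks, informally, that the gap $d(Y,X)$ (starting at $d$) hits $0$ before the gaps $d(X,U)$ and $d(V,Y)$ (each starting at $1$) do. Since meeting of $X,Y$ in the long arc takes time of order $d^2$, and since a simple random walk gap starting at $1$ stays positive for time $t$ with probability $\asymp t^{-1/2}$, each of the two constraints contributes a factor $\asymp 1/d$. Conditioning on the trajectories of $X$ and $Y$ makes $U$ and $V$ independent and lets the two constraints effectively factorise, yielding the $O(d^{-2})$ prefactor.

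Step 3 (Martingale estimate for the Laplace transform). In the spirit of the proof of Lemma~\ref{L:cyclevar}, I would choose $\theta\in(0,1)$ so that $\theta+\theta^{-1}=2+8\delta$ and consider the process $N_t:=\theta^{d(Y_t,X_t)}e^{-4\delta t}$, which is a martingale until $M^{X,Y}$. Optional stopping then yields
\[
\mathds{E}^{\mathrm{RW}(2)}_{(x,y)}\!\bigl[e^{-4\delta M^{X,Y}}\bigr]\lesssim \theta^{d}+\theta^{\,n-d},
\]
and the standard expansion $\theta=1-c\sqrt\delta+O(\delta)$ gives $\theta^d\lesssim \exp(-Cd\sqrt p)$ for small $p$; the wraparound term $\theta^{n-d}$ is negligible because $n-d\ge d$ by assumption. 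Combining with Step 2 produces the $d^{-2}\exp(-Cd\sqrt p)$ piece of the bound.

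Step 4 (Assembling the bound and lower-order correction). I would split the expectation according to whether $M^{X,Y}\le T$ or $M^{X,Y}>T$, with $T\asymp d/\sqrt p$. On the long-time event, $e^{-4\delta M^{X,Y}}\le e^{-4\delta T}$ supplies the exponential decay; on the short-time event, the gambler's-ruin restriction is the binding factor. The residual $O(d^{-3})$ term will come from the finite-size correction to the gambler's ruin probability on the cycle (the factor $1/d$ above is sharp only up to a multiplicative $1+O(1/d)$ correction) and captures the regime where the exponential factor contributes nothing, e.g.\! when $p=0$. The main obstacle is that the three stopping times $M^{X,Y}, M^{X,U}, M^{V,Y}$ are correlated through the motion of $X$ (which drives two gap processes in opposite directions); handling this properly requires conditioning on the $X,Y$ trajectories before invoking the independence of $U$ and $V$, or alternatively constructing a joint martingale that tracks both $d(Y,X)$ and $d(X,U)$ simultaneously.
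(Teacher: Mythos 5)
There is a genuine gap at the heart of Step~2, and you identify it yourself in the last sentence of the proposal but do not resolve it. Your plan writes
\[
\Prob^{\mathrm{CAB}}_{\bf x}(M^{X,Y}<\tau)\approx\mathds{E}^{\mathrm{RW}(4)}\!\left[e^{-4\delta M^{X,Y}}\indic{M^{X,Y}< M^{X,U}\wedge M^{V,Y}}\right]
\]
and then seeks to extract $\asymp d^{-1}$ from each of the side constraints $\{M^{X,Y}<M^{X,U}\}$, $\{M^{X,Y}<M^{V,Y}\}$ by a gambler's-ruin bound. Conditioning on the trajectories of $X$ and $Y$ does indeed make the two side events conditionally independent, but the conditional probability $\Prob(M^{X,Y}<M^{X,U}\mid X,Y)$ depends on the entire path of $X$, not merely on the value of $M^{X,Y}$. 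Since $X$ drives both $M^{X,Y}$ (pushing the $X$--$Y$ gap to zero requires $X$ to drift right) and $M^{X,U}$ (the same drift pushes the $X$--$U$ gap away from zero, making the constraint easier), there is a strong positive correlation between the event $\{M^{X,Y}=t\}$ and the event $\{M^{X,U}>t\}$ that works against you; the uniform-over-trajectory bound $\lesssim d^{-1}$ you need is not a consequence of the one-walk first-passage estimate $\Prob(\tau_0>t)\asymp t^{-1/2}$ and is not established in the proposal. The suggested alternatives (a joint two-gap martingale, or a further conditioning) are not carried out, and the naive factorisation is precisely the part of the argument that is hard.

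The paper's proof handles exactly this difficulty by a \emph{time-splitting} device you do not use: it notes that for $X$ and $Y$ (starting at clockwise distance $d$) to meet before the side meetings, at least one of them must first reach the midpoint of the long arc; this reduces the problem to a hitting-time bound for a WAG/WAIG/WMIG process on $\mathds{Z}$, and then it replaces the condition $\{\tau_0^X<M^{X,U}\wedge M^{V,Y}\wedge\tau^Y\}$ by two constraints over \emph{disjoint time intervals}: (i) $X$ reaches $-\lfloor d/4\rfloor$ before meeting $U$, and (ii) the remaining trip $-\lfloor d/4\rfloor\to 0$ is shorter than $M^{V,Y}\wedge\tau^Y$. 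Because these concern disjoint stretches of the driving noise, they are genuinely independent in the WMIG process, and each contributes a clean factor via the one-walk first-passage estimate (and, for (ii), a Laplace-transform/LCLT estimate that gives the $e^{-Cd\sqrt p}+d^{-1}$ factor). Passing from WAG (coalesced ghost) to WAIG (independent ghost) also requires an FKG argument (Lemma~\ref{L:WAG2WAIG}), another step absent from the proposal. A smaller issue: the continuous-time identity in your Step~1 is not exactly an equality for CAB, because if $U$ and $V$ coalesce before $M^{X,Y}$ the number of alive particles drops from 4 to 3 and the overall absorption rate becomes $3\delta$, not $4\delta$; this perturbation goes in the wrong direction for an upper bound and would need to be controlled separately. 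Your Steps~3 and~4 (the Laplace-transform martingale for $M^{X,Y}$ and the cutoff at $T\asymp d/\sqrt p$) are sound ingredients and parallel what the paper ultimately does via Lemma~\ref{L:LCLT}, but without the time-splitting/independence mechanism the $d^{-2}$ prefactor is not obtained.
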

We give the proof towards the end of this section. 

\begin{proposition}\label{P:cycle2ndprob}
Let  ${\bf x}=(x,u,v,y)\in (C_n)_4$ be clockwise-oriented and distinct with $x\sim u$, $v\sim y$ and $d(y,x)\ge d(u,v)$. Let $({\bf X}_t)_{t\ge0}=(X_t,U_t,V_t,Y_t)_{t\ge0}$ be a CAB process initialised from $(x,u,v,y)$.  Then for any $p\in(0,1)$,
\[\Prob_{\bf x}^{\mathrm{CAB}}\big(M^{U,V}<\tau^U=\min\{\tau,M^{X,Y}\}\big)\le \Prob_{\bf x}^{\mathrm{CAB}}\big(M^{U,V}<\tau\big)\wedge p.\]
\end{proposition}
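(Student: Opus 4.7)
The event $E=\{M^{U,V}<\tau^U=\min\{\tau,M^{X,Y}\}\}$ requires a very specific causal chain inside the CAB process: first $U$ and $V$ must coalesce into $W=U\cup V$, and then $W$ must be absorbed before any further merge or absorption (including the $X$--$Y$ meeting). My strategy is to isolate the absorption at step $\tau^U$, since that step forces an \emph{independent} Bernoulli$(p)$ coin flip to succeed, which should be the source of the factor $p$ in the bound.

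Concretely, I would realize the CAB dynamics through two independent sources of randomness: an IID sequence $(\xi_t)_{t\ge1}$ of Bernoulli$(p)$ absorption coins, and a pick-and-jump sequence $(P_t,D_t)_{t\ge1}$ of uniform particle picks together with uniform jump directions. At step $t$, the picked particle $P_t$ is absorbed when $\xi_t=1$ and otherwise jumps in direction $D_t$ (possibly coalescing). On $E$ one must have $\xi_t=0$ for all $t<\tau^U$, $\xi_{\tau^U}=1$, the picked particle at $\tau^U$ must be $W$, and the state at $\tau^U-1$ must be the ``consistent'' three-particle configuration, i.e.\ $U,V$ merged and no other merge has taken place. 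Setting $G=\min\{t:\xi_t=1\}\sim\mathrm{Geom}(p)$ and exploiting independence of $(\xi_t)$ from the pick-and-jump sequence, one obtains the key identity
\[
\Prob_{\bf x}^{\mathrm{CAB}}(E)=p\sum_{T\ge1}(1-p)^{T-1}\,\Prob_{\mathrm{ghost}}(\tilde E_T),
\]
where $\Prob_{\mathrm{ghost}}$ denotes the law of CAB with $p=0$ (no absorptions), and $\tilde E_T$ is the event that, in the ghost process, $P_T=W$ and the state at $T-1$ is the consistent configuration.

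Next I would translate the sum into a ghost-process expectation. Let $A^*$ be the event that the first merge in the ghost process is $U$--$V$, let $T_1$ be that merge time, and let $T_2$ be the time of the next merge. The consistent configuration holds precisely for $T\in[T_1+1,T_2]$, and $\Prob(P_T=W)=1/3$ independently of the jump dynamics, so summing the geometric series and using $1-(1-p)^k\le pk$ gives
\[
\Prob_{\bf x}^{\mathrm{CAB}}(E)\;\le\;\tfrac{p}{3}\,\E_{\mathrm{ghost}}\bigl[\mathds 1_{A^*}(T_2-T_1)\bigr].
\]
The desired bound $\Prob_{\bf x}^{\mathrm{CAB}}(E)\le p$ would then follow from the estimate $\E_{\mathrm{ghost}}[\mathds 1_{A^*}(T_2-T_1)]\le 3$.

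The hard part is exactly this last expectation bound. After the $U$--$V$ merge, $W$ is located at one of $u,v$, hence (to leading order, since on $A^*$ the merge happens early) $W$ is adjacent to one of $X,Y$; that adjacent pair has an immediate per-step merge rate of $1/3$ in the ghost process, suggesting $T_2-T_1$ is typically $O(1)$. The difficulty is that on the cycle $C_n$ two particles starting adjacent may diffuse apart and only re-meet after $\Theta(n^2)$ steps, so $T_2-T_1$ is not deterministically $O(1)$; the bound must exploit the indicator $\mathds 1_{A^*}$ and the structural hypotheses $x\sim u$, $v\sim y$, and $d(y,x)\ge d(u,v)$ to control the tail. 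I would handle this by coupling $T_2-T_1$ on $A^*$ with the hitting time of $0$ by a one-dimensional random walk on $\mathds Z$ (comparing the gap between $W$ and its adjacent neighbor to a reflecting walk), and use a gambler's-ruin-style bound together with $\Prob(A^*)\le 1/3$ to absorb the remaining factor. This geometric analysis on $A^*$ is the main obstacle of the proof.
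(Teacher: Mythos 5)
Your geometric decomposition over the first absorption coin is sound and is a genuinely different opening from the paper's. Conditionally on the coins $\xi_t$ all being $0$ before time $T$, the CAB picks and jumps do coincide with the ghost process, so the identity
\[
\Prob_{\bf x}^{\mathrm{CAB}}(E)=\frac{p}{3}\,\E_{\mathrm{ghost}}\Bigl[\mathds 1_{A^*}\sum_{T=T_1+1}^{T_2}(1-p)^{T-1}\Bigr]\le\frac p3\,\E_{\mathrm{ghost}}\bigl[\mathds 1_{A^*}(T_2-T_1)\bigr]
\]
is correct, and the target bound $\E_{\mathrm{ghost}}[\mathds 1_{A^*}(T_2-T_1)]\le 3$ is in fact true. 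The gap is in how you propose to prove that bound.

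Your heuristic is wrong in an essential way. After the $U$--$V$ merge at time $T_1$, $W$ is \emph{not} at $u$ or $v$ and is generally \emph{not} adjacent to $X_{T_1}$ or $Y_{T_1}$: all four walkers have moved for $T_1$ steps, and conditioned on $A^*$ the gaps $d(X_{T_1},W_{T_1})$ and $d(W_{T_1},Y_{T_1})$ can be of order $n$ (and $T_2-T_1$ of order $n^2$). "On $A^*$ the merge happens early" is not justified by the hypotheses. Consequently there is no ``immediate per-step merge rate of $1/3$,'' and a gambler's-ruin bound on the gap between $W$ and an adjacent neighbour, combined with $\Prob(A^*)\le 1/3$, cannot close the argument --- the bound is not about $A^*$ being rare nor about $W$ being near $X$ or $Y$. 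The hypotheses $x\sim u$, $v\sim y$, $d(y,x)\ge d(u,v)$ enter through a martingale initial condition, not through proximity at time $T_1$.

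The correct closing step is a two-martingale argument. First, for three coalescing walkers on $C_n$ with gaps $(d_1,d_2,d_3)$, $d_1+d_2+d_3=n$, and one uniformly chosen walker jumping $\pm1$ per step, the product $d_1d_2d_3$ has constant drift $-n/3$, so optional stopping at the first-merger time gives $\E[T_2-T_1\mid\mathcal F_{T_1}]=3\,d_1d_2d_3/n\le 3\,d(X_{T_1},W_{T_1})\,d(W_{T_1},Y_{T_1})$. Second, in the four-walker period $[0,T_1]$ the lifted product $(B_t-A_t)(D_t-C_t)\wedge n^2$ is a bounded supermartingale started at $d(x,u)\,d(v,y)=1$ (this is exactly the paper's process $N_t$), whence $\E_{\mathrm{ghost}}[\mathds 1_{A^*}d(X_{T_1},W_{T_1})\,d(W_{T_1},Y_{T_1})]\le 1$. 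Chaining gives $\E_{\mathrm{ghost}}[\mathds 1_{A^*}(T_2-T_1)]\le 3$ and the result. By contrast, the paper avoids the waiting-time computation altogether: it bounds the post-merge probability that absorption precedes the next merger directly via an exponential martingale $(1-\zeta^a)(1-\zeta^b)$ on the WAB process, getting $3p\cdot d(X_{T_1},W_{T_1})\,d(W_{T_1},Y_{T_1})$, and then invokes the same $\RW(4)$ product supermartingale. So your route can be completed, but the essential martingale inputs are missing from your sketch and your proposed replacement for them does not work.
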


Before proving this, we show how these propositions combined with Theorem~\ref{thm:normalApprox} yield Item~\ref{thm:grid1} of Proposition~\ref{P:grid}.
\begin{proof}[Proof of Proposition~\ref{P:grid} Item \ref{thm:grid1}]
The case $pn^2 \to 0$ follows immediately from Proposition~\ref{prop:small_p} (recall that $p$ depends on $n$), since the expected meeting time of two independent stationary walks is $\Theta(n^2)$. 

Suppose now that $pn^2 \to \infty$. Recall that $p$ is non-increasing in $n$, hence $\lim_n p$ exists. If $\lim_n p>0$ then the result follows from Corollary~\ref{cor:normalApprox}. Next, by Theorem~\ref{thm:normalApprox} and equation~\ref{eqn:varPsiineq1}, it suffices to show that if $n^{-2}\ll p \ll 1$ then 
\[
\frac1{n^2p\sigma^3}+\frac1{np\sigma^2}\sqrt{\sum_{x,y,u,v} \pi(x)\pi(y)P(x,u)P(y,v)h_0(u,v,x,y)}=o(1)
\]as $n\to\infty$. For the first term of the sum above, by Lemma~\ref{L:cyclevar}, we have $\sigma^2=\frac1{2\sqrt{ 2p}n}(1+o(1))$ and so $(n^2p\sigma^3)^{-1}=o(1)$ as $n\to\infty$. The second term in the sum is more complicated. We begin by noticing that

\begin{align}
    \sum_{x,y,u,v} \pi(x)\pi(y)P(x,u)P(y,v)h_0(u,v,x,y) =\E\left( h_0(U,V,X,Y)\right)
\end{align}
 where $X,Y\sim \pi$, $U$ is chosen according to $P(X,\cdot)$ and $V$ according to $P(Y,\cdot)$ all independently. Now observe that from the definition of $h$, 
\[h(u,v,x,y)=h(x,v,u,y)=h(u,y,x,v)=h(x,y,u,v),\]and thus the same is true also for $h_0$.
 Therefore we have
\begin{align*}
\mathbb{E}[h_0(U,V,X,Y)]&=\frac1{4n^2}\sum_{x,y\in C_n}\big[h_0(x_+,y_+,x,y)+h_0(x_+,y_-,x,y)\\
&\phantom{=\frac1{4n^2}\sum_{x,y\in C_n}\big[}+h_0(x_-,y_+,x,y)+h_0(x_-,y_-,x,y)\big]\\
&=\frac1{n^2}\sum_{x,y\in C_n}h_0(x_+,y_-,x,y),
\end{align*}
where $x_+$, $x_-$ for $x\in C_n$ refers to the element of $C_n$ neighbouring $x$ clockwise, counter-clockwise respectively. Notice also that $h_0(u,v,x,y)=h_0(y,x,v,u)$ and hence the above can be written as
\begin{align}
\begin{split}\mathbb{E}[h_0(U,V,X,Y)]
&=\frac1{n^2}\sum_{x\in C_n}\sum_{\substack{y\in C_n:\\1\le d(y,x)<\lfloor n/2\rfloor}}h_0(y,x,y_-,x_+)\\&+\frac1{n^2}\sum_{x\in C_n}\sum_{\substack{y\in C_n:\\d\lfloor n/2\rfloor\le d(y,x)\le n-3}}h_0(x_+,y_-,x,y)\\
&\phantom{=}+\frac1{n^2}\sum_{x\in C_n}\sum_{\substack{y\in C_n:\\n-2\le d(y,x)\le n}}h_0(x_+,y_-,x,y).\end{split}\label{e:Eh}
\end{align}
\underline{If $d(y,x)=n-2$} then $x_+=y_-$ and $h_0(x_+,y_-,x,y)=g_0(x,y,x_+)$ where $g$ is as in Lemma~\ref{L:gmart} and $g_0=g\vee 0$. To bound these terms we construct a discrete-time process $(M_t)_{t\in\mathbb{N}_0}$ defined in terms of the locations of the particles in the dual process as
\begin{align}
M_t=\left(1-\zeta^{d(X_t,(X_+)_t)}\right)\left(1-\zeta^{d((X_+)_t,Y_t)}\right)\indic{t<\tau_\mathrm{abs}}+\indic{t\ge\tau_\mathrm{abs}},\label{e:mart1}
\end{align}
where $\tau_\mathrm{abs}=\min\{\tau^x,\tau^y,\tau^{x_+}\}$. This process is a bounded martingale until time \[T:=\min\{\tau_\mathrm{abs},M^{x,y},M^{x,x_+},M^{y,x_+}\}\] with $\zeta=\frac{2+p-\sqrt{3p(4-p)}}{2(1-p)}=1-\sqrt{3p}+O(p)$ as $p\to0$ (details are given later for the similar martingale appearing in \eqref{e:mart2}). Then by the optional stopping theorem we have 
\begin{align*}
(1-\zeta)^2&=\mathbb{E}\left[\indic{T=\tau_\mathrm{abs}}+(1-\zeta^{d(X_T,(X_+)_T)})(1-\zeta^{d((X_+)_T,X_T)})\indic{T=M^{x,y}}\right]\\
&\ge \mathbb{E}\left[\indic{T=\tau_\mathrm{abs}}+(1-\zeta^{d(X_T,(X_+)_T)}-\zeta^{d((X_+)_T,X_T)})\indic{T=M^{x,y}}\right]\\
&\ge \mathbb{E}\left[\indic{T=\tau_\mathrm{abs}}+(1-\zeta-\zeta^{n-1})\indic{T=M^{x,y}}\right]\\
&=\mathbb{E}\left[\indic{T=\tau_\mathrm{abs}}+\sqrt{3p}(1+o(1))\indic{T=M^{x,y}}\right]\\
&\ge \sqrt{3p}(1+o(1))\left(\mathbb{P}(T=\tau_\mathrm{abs})+\mathbb{P}(T=M^{x,y})\right)\\
&\ge \sqrt{p/3}(1+o(1))\left(\mathbb{P}(T=\tau^{x_+})+\mathbb{P}(T=M^{x,y})\right)\\
&\ge \sqrt{p/3}(1+o(1))g(x,y,x_+)
\end{align*}
as $n\to\infty$.
We deduce that there exists a universal $c>0$ such that for all $n$ sufficiently large $g(x,y,x_+)\le c\sqrt p$. It follows that also $g_0(x,y,x_+)\le c\sqrt p$.

\underline{If $d(y,x)=n-1$} then $x=y_-$ and $y=x_+$ so $h(x_+,y_-,x,y)=\mathbb{P}(B(x)\neq B(x_+))-\mathbb{P}(B(x)\neq B(x_+))^2\le\mathbb{P}(B(x)\neq B(x_+))=\frac12\left(1-\mathbb{P}\left(M^{x,x_+}<\tau^x\wedge\tau^{x_+}\right)\right)$. We can now appeal to \eqref{eq:ERWk} to deduce that $\mathbb{P}\left(M^{x,x_+}<\tau^x\wedge\tau^{x_+}\right)=(\theta+\theta^{n-1})(1+\theta^n)^{-1}$ where $\theta=(1-\sqrt{p(2-p)})(1-p)^{-1}$, i.e.\! $\mathbb{P}\left(M^{x,x_+}<\tau^x\wedge\tau^{x_+}\right)=1-\sqrt{2p}+O(p)$ as $p\to0$, and so there exists a universal $c>0$ such that for all $n$ sufficiently large $h(x_+,y_-,x,y)\le c\sqrt p$. It follows that also $h_0(x_+,y_-,x,y)\le c\sqrt p$.

\underline{If $d(y,x)=n$} then $x=y$ and $h_0(x_+,y_-,x,y)=g_0(x_+,y_-,x)$. By the same argument as the $d(y,x)=n-2$ case, we have $g_0(x_+,y_-,x)\le c\sqrt p$ eventually.

\underline{If $1\le d(x,y)\le n-3$} (which coincides with $\{x,y,x_+,y_-\}$ being distinct), we can apply Corollary~\ref{C:cyclecov} followed by Propositions~\ref{P:cyclemain} and~\ref{P:cycle2ndprob} to each of these above terms. If further $d(y,x)<\lfloor n/2\rfloor$ then $d(x_+,y_-)\ge d(y,x)$ and so we obtain 
\begin{align*}
&h_0(y,x,y_-,x_+)\\&\le \Prob_{{\bf y}}^{\mathrm{CAB}}\big(M^{X_+,Y_-}<\tau\big)
+\Prob_{{\bf y}}^{\mathrm{CAB}}\big(M^{X,Y}<\tau^X=\min\{\tau,M^{X_+,Y_-}\}\big)\\
&\le Cd(x_+,y_-)^{-2}\left(e^{-C\sqrt p d(x_+,y_-)}+d(x_+,y_-)^{-1}\right)\\
&\phantom{\le}+\left\{Cd(y,x)^{-2}\left(e^{-C\sqrt p d(y,x)}+d(y,x)^{-1}\right)\right\}\wedge p,
\end{align*}
where ${\bf y}=(y_-,y,x,x_+)$ and $\tau=\min\{\tau^X,\tau^{X_+},\tau^Y,\tau^{Y_-},M^{X,X_+},M^{Y,Y_-}\}$.
Since $d(y,x)<\lfloor n/2\rfloor$ we deduce that for $n\ge 4$, $d(x_+,y_-)\ge n/4$, hence there exists a universal constant $\hat C>0$ such that for all $n$ sufficiently large, we have the bound
\begin{align}\label{e:dyxsmall}
h_0(y,x,y_-,x_+)&\le \frac{\hat C}{n^2}\left(e^{-\hat C\sqrt p n}+\frac1{n}\right)+\left\{Cd(y,x)^{-2}\left(e^{-C\sqrt p d(y,x)}+d(y,x)^{-1}\right)\right\}\wedge p.
\end{align}

If instead we have $d(y,x)\ge \lfloor n/2\rfloor$ (with $\{x,y,x_+,y_-\}$ still distinct) then $d(y,x)\ge d(x_+,y_-)$ and we similarly obtain for all $n$ sufficiently large 
\begin{align}
&\notag h_0(x_+,y_-,x,y)\\\notag&\le \Prob_{\bf x}^{\mathrm{CAB}}\big(M^{X,Y}<\tau\big)
+\Prob_{\bf x}^{\mathrm{CAB}}\big(M^{X_+,Y_-}<\tau\big)\wedge\Prob_{\bf x}^{\mathrm{CAB}}\big(M^{X_+,Y_-}<\tau^{X_+}=\min\{\tau,M^{X,Y}\}\big)\\
&\le Cd(y,x)^{-2}\left(e^{-C\sqrt p d(y,x)}+d(y,x)^{-1}\right)\notag\\
&\phantom{\le}+\left\{Cd(x_+,y_-)^{-2}\left(e^{-C\sqrt p d(x_+,y_-)}+d(x_+,y_-)^{-1}\right)\right\}\wedge p,\notag\\
&\le \frac{\hat C}{n^2}\left(e^{-\hat C\sqrt p n}+\frac1{n}\right)+\left\{Cd(x_+,y_-)^{-2}\left(e^{-C\sqrt p d(x_+,y_-)}+d(x_+,y_-)^{-1}\right)\right\}\wedge p\label{e:dyxlarge},
\end{align}
where ${\bf x}=(x,x_+,y,y_-)$ and $\tau=\min\{\tau^X,\tau^{X_+},\tau^Y,\tau^{Y_-},M^{X,X_+},M^{Y,Y_-}\}$.
Putting the bounds from \eqref{e:dyxsmall} and \eqref{e:dyxlarge} together with the bound of $c\sqrt p$ for the other cases into \eqref{e:Eh} we obtain that there exist constants $c_1,c_2,c_3,c_4>0$ such that for all $n$ sufficiently large
\begin{align*}
&\mathbb{E}[h_0(U,V,X,Y)]\\&\le\frac{c_1}{n}\sum_{d=1}^{\lfloor n/2\rfloor}\left[n^{-2}\left(e^{-c_1\sqrt p n}+n^{-1}\right)+\left(d^{-2}\left(e^{-c_1\sqrt p d}+d^{-1}\right)\right)\wedge p\right]+\frac{\sqrt p}{n}\\
&\le \frac{c_1}{n^2}+\frac{c_2}{n}\sum_{d=1}^{\lfloor(\log 1/p)/\sqrt p\rfloor}\left(d^{-2}e^{-c_1\sqrt p d}\right)\wedge p+\frac{c_2}{n}\sum_{d=\lfloor(\log 1/p)/\sqrt p\rfloor}^{\lfloor n/2\rfloor}d^{-3}\wedge p+\frac{\sqrt p}{n}\\
&\le \frac{c_1}{n^2}+\frac{c_3}{n}\sum_{d=1}^{\lfloor1/\sqrt p\rfloor}p+\frac{c_3}{n}\sum_{d=\lfloor1/\sqrt p\rfloor}^{\lfloor(\log 1/p)/\sqrt p\rfloor}d^{-2}+\frac{c_2}{n}\sum_{d=\lfloor(\log 1/p)/\sqrt p\rfloor}^{\lfloor n/2\rfloor}d^{-3}+\frac{\sqrt p}{n}\\
&\le \frac{c_1}{n^2}+\frac{c_3\sqrt p}{n}+\frac{c_4\sqrt p}{n}+\frac{c_4p}{n}+\frac{\sqrt p}{n}.
\end{align*}
Recalling that $\sigma^2=\frac1{2\sqrt{ 2p}n}(1+o(1))$ as $n\to\infty$, it follows that there exists a constant $c_5>0$ such that for all $n$ sufficiently large,
\[
\frac1{np\sigma^2}\sqrt{\mathbb{E}[h_0(U,V,X,Y)]}\le \frac{c_5}{\sqrt p}\sqrt{n^{-2}+\sqrt pn^{-1}}
\]and the right-hand side tends to 0 as $n\to\infty$ by the assumption that $p\gg n^{-2}$.
\end{proof}
\begin{proof}[Proof of Proposition~\ref{P:cycle2ndprob}]
The bound \[\Prob_{\bf x}^{\mathrm{CAB}}\big(M^{U,V}<\tau^U=\min\{\tau,M^{X,Y}\}\big)\le \Prob_{\bf x}^{\mathrm{CAB}}\big(M^{U,V}<\tau\big)\]
is immediate. To bound the probability on the left-hand side above by $p$, we shall use martingale optional stopping arguments. Before defining the appropriate martingales, we introduce a process on the line $\mathbb{Z}$. We define a process WAB ({\bf w}alks with {\bf ab}sorption) as a process $({\bf X}^*_t)_{t\in\mathbb{N}_0}=(X_t^*,U_t^*,Y_t^*)_{t\in\mathbb{N}_0}$ taking values in $(\mathbb{Z}\cup \Delta)^3$ where $\Delta$ denotes the cemetery state. At each step, one of the three particles is chosen uniformly. The chosen particle jumps to the cemetery state with probability $p$, jumps down 1 with probability $(1-p)/2$ and otherwise jumps up 1. The process terminates at the first time two particles meet or any particle jumps to the cemetery state.


Now we construct a discrete-time process $(M_t)_{t\in\mathbb{N}_0}$ defined in terms of the locations of the particles in the WAB process. This process is almost identical to the one defined in \eqref{e:mart1} except the underlying process here is a WAB process on the line and the prior martingale is the dual process on the cycle. Specifically, for each $t\in\mathbb{N}_0$ we set
\begin{align}\label{e:mart2}
M_t=\left(1-\zeta^{U_t^*-X_t^*}\right)\left(1-\zeta^{Y_t^*-U_t^*}\right)\indic{t<\tau_\mathrm{abs}}+\indic{t\ge\tau_\mathrm{abs}},
\end{align}
for $\zeta=\zeta(p)\in(0,1)$ to be determined and where $\tau_\mathrm{abs}=\min\{\tau^{X^*},\tau^{U^*},\tau^{Y^*}\}$ (with this notation again referring to absorption times). The choice of $\zeta$ will ensure $(M_t)$ is a martingale.
Let $\mathcal{F}_t$ be the filtration generated by the trajectories of all particles in WAB up to time $t$. Then for each $t\in\mathbb{N}_0$,
\begin{align*}
&\mathbb{E}[M_{t+1}\mid \mathcal{F}_t]\\=&\mathbb{E}\left[\left(1-\zeta^{U_{t+1}^*-X_{t+1}^*}\right)\left(1-\zeta^{Y_{t+1}^*-U_{t+1}^*}\right)\indic{t+1<\tau_\mathrm{abs}}+\indic{t+1\ge\tau_\mathrm{abs}}\mid\mathcal{F}_t\right]\\
=&\indic{t\ge\tau_\mathrm{abs}} +p\indic{t<\tau_\mathrm{abs}}\\&+\frac{1-p}{6}\indic{t<\tau_\mathrm{abs}}\bigg\{\left(1-\zeta^{U_{t}^*-X_{t}^*-1}\right)\left(1-\zeta^{Y_{t}^*-U_{t}^*}\right)
+\left(1-\zeta^{U_{t}^*-X_{t}^*+1}\right)\left(1-\zeta^{Y_{t}^*-U_{t}^*}\right)\bigg\}\\
&+\frac{1-p}{6}\indic{t<\tau_\mathrm{abs}}\bigg\{\left(1-\zeta^{U_{t}^*-X_{t}^*+1}\right)\left(1-\zeta^{Y_{t}^*-U_{t}^*-1}\right)\\
&\phantom{+\frac{1-p}{6}\indic{t<\tau_\mathrm{abs}}\bigg\{}+\left(1-\zeta^{U_{t}^*-X_{t}^*-1}\right)\left(1-\zeta^{Y_{t}^*-U_{t}^*+1}\right)\bigg\} \\
&+\frac{1-p}{6}\indic{t<\tau_\mathrm{abs}}\bigg\{\left(1-\zeta^{U_{t}^*-X_{t}^*}\right)\left(1-\zeta^{Y_{t}^*-U_{t}^*+1}\right)+\left(1-\zeta^{U_{t}^*-X_{t}^*}\right)\left(1-\zeta^{Y_{t}^*-U_{t}^*-1}\right) \bigg\}\\
&=\indic{t\ge\tau_\mathrm{abs}}+\indic{t<\tau_\mathrm{abs}}\Bigg[1+\frac{1-p}{3}\left(1+\zeta+1/\zeta\right)\left(\zeta^{Y_{t}^*-X_{t}^*}-\zeta^{U_{t}^*-X_{t}^*}-\zeta^{Y_{t}^*-U_{t}^*}\right)\Bigg].
\end{align*}
Thus for $(M_t)$ to be a martingale it suffices that $1+\zeta+1/\zeta=3/(1-p)$, i.e.\! we take 
\begin{align}\label{e:zetabound}
\zeta=\frac{2+p-\sqrt{3p(4-p)}}{2(1-p)}\ge1-\sqrt{3p}.
\end{align}
 Let $T^*:=\tau_\mathrm{abs}\wedge M^{X^*,U^*}\wedge M^{Y^*,U^*}$ and observe that $|M_t|\le 1$ for all $t\le T^*$. Thus by the optional stopping theorem, \begin{align}\label{e:OST1}
\mathbb{E}[(1-\zeta^{U^*_0-X^*_0})(1-\zeta^{Y^*_0-U^*_0})]=\mathbb{E}[M_0]=\mathbb{E}[M_{T^*}]=\mathbb{P}^\mathrm{WAB}(\tau_\mathrm{abs}<M^{X^*,U^*}\wedge M^{Y^*,U^*}).
\end{align}

We turn to bounding the probability of interest. 
It is convenient to couple the CAB process to a process denoted CAB$'$ which is initialised from the same state and which evolves in the same way except that there can be no absorptions until the first meeting time (one way to construct is to take the CAB process and set $p=0$ until the first meeting occurs). The coupling is successful (i.e.\! the two processes agree for all times) for CAB processes which satisfy that the first absorption is after the first meeting. Thus
\begin{align*}
&\Prob_{\bf x}^{\mathrm{CAB}}\big(M^{U,V}<\tau^U=\min\{\tau,M^{X,Y}\}\big)\\&\le\Prob_{\bf x}^{\mathrm{CAB}'}\big(M^{U,V}<\tau^U=\min\{\tau,M^{X,Y}\}\big)\\
&= \Prob_{\bf x}^{\mathrm{CAB}'}\big(M^{U,V}< \min\{M^{X,U},M^{X,Y},M^{U,Y}\},\, \tau^U=\min\{\tau,M^{X,Y}\}\big).
\end{align*}

Now let $\mathcal{F}^{\mathrm{CAB}'}_t$ be the sigma algebra generated by the CAB$'$ process up to time $t$ and $R:=\min\{\tau^X,\tau^U,\tau^Y\}$. Then
\begin{align}
&\Prob_{\bf x}^{\mathrm{CAB}}\big(M^{U,V}<\tau^U=\min\{\tau,M^{X,Y}\}\big)\notag\\
&\le\mathbb{E}^{\mathrm{CAB}'}_{\bf x}\left[\mathbb{E}\left[\indic{M^{U,V}<\min\{M^{X,U},M^{X,Y},M^{V,Y}\}}\indic{\tau^U=\min\{\tau,M^{X,Y}\}}\mid\mathcal{F}^{\mathrm{CAB}'}_{M^{U,V}},\,M^{U,V}\right]\right]\notag\\
&\le\frac13\mathbb{E}^{\mathrm{CAB}'}_{\bf x}\Big[\indic{M^{U,V}<\min\{M^{X,U},M^{X,Y},M^{V,Y}\}}\notag\\&\phantom{\le\frac13\mathbb{E}^{\mathrm{CAB}'}_{\bf x}\Big[}\cdot\mathbb{E}\left[\indic{R<\min\{M^{X,U},M^{X,Y},M^{V,Y}\}}\mid\mathcal{F}^{\mathrm{CAB}'}_{M^{U,V}},\,M^{U,V}\right]\Big].\label{e:CAB1}
\end{align}
The random variable 
\[
\indic{M^{U,V}<\min\{M^{X,U},M^{X,Y},M^{V,Y}\}}\mathbb{E}\left[\indic{R<\min\{M^{X,U},M^{X,Y},M^{V,Y}\}}\mid\mathcal{F}^{\mathrm{CAB}'}_{M^{U,V}},\,M^{U,V}\right]\]
is clearly only non-zero for trajectories satisfying $M^{U,V}<\min\{M^{X,U},M^{X,Y},M^{V,Y}\}$. For these, we have
by the memoryless property ($R$ is geometrically distributed) and the strong Markov property at time $S:=M^{U,V}$, almost surely
\begin{align*}
&\mathbb{E}\left[\indic{R<\min\{M^{X,U},M^{X,Y},M^{V,Y}\}}\mid\mathcal{F}^{\mathrm{CAB}'}_{S},\,S\right]\\
&\le \Prob_{({ X}_{S},{ U}_{S},{ V}_{S},{Y}_{S})}^{\mathrm{CAB}'}(R<\min\{M^{X,U},M^{X,Y},M^{V,Y}\}).
\end{align*}
The CAB$'$ process in the last probability is initialised from the random state $({ X}_{S},{ U}_{S},{ V}_{S},{Y}_{S})$, and as such $U$ and $V$ will remain together for all later times; thus we can write the preceding probability as (we change $M^{V,Y}$ to $M^{U,Y}$)
\[
\Prob_{({ X}_{S},{ U}_{S},{ V}_{S},{Y}_{S})}^{\mathrm{CAB}}(R<\min\{M^{X,U},M^{X,Y},M^{U,Y}\}),
\]
where $(X_S,U_S,V_S,Y_S)$ is the time $S$ state of a CAB$'$ process (from time $S$ onwards the process behaves as a CAB process hence the above probability measure is $\Prob^\mathrm{CAB}$). 

This CAB can be coupled with a WAB process started from $(-d(X_{S},U_{S}),0,d(U_{S},Y_{S}))$ such that a jump clockwise/anti-clockwise in CAB corresponds to a jump up/down in WAB, and jumps to cemetery states coincide in the two processes. 

This coupling gives that, almost surely
\begin{align*}
&\Prob^{\mathrm{CAB}}_{({ X}_{S},{ U}_{S},{ V}_{S},{Y}_{S})}\left(R<\min\{M^{X,U},M^{X,Y},M^{U,Y}\}\right)\\&\le \Prob^{\mathrm{WAB}}_{(-d(X_{S},U_{S}),0,d(U_{S},Y_{S}))}\left(\tau_\mathrm{abs}<\min\{M^{X^*,U^*},M^{Y^*,U^*}\}\right),
\end{align*}
and combining this with \eqref{e:OST1} we obtain
\begin{align*}
&\Prob^{\mathrm{CAB}}_{({ X}_{S},{ U}_{S},{ V}_{S},{Y}_{S})}\left(R<\min\{M^{X,U},M^{X,Y},M^{U,Y}\}\right)
\\&\le \mathbb{E}^\mathrm{WAB}\left[(1-\zeta^{d(X_S,U_S)})(1-\zeta^{d(U_S,Y_S)})\mid X_S,U_S,Y_S\right]\\
&\le 3p\,d(X_S,U_S)\,d(U_S,Y_S)
\end{align*}
almost surely, where we have used the bound \eqref{e:zetabound} in the final inequality. 

Plugging this into \eqref{e:CAB1} we obtain
\begin{align}
&\Prob_{\bf x}^{\mathrm{CAB}}\big(M^{U,V}<\tau^U=\min\{\tau,M^{X,Y}\}\big)\notag\\
&\le p \,\mathbb{E}^{\mathrm{CAB}'}_{\bf x}\left[\indic{M^{U,V}<\min\{M^{X,U},M^{X,Y},M^{V,Y}\}}d(X_{M^{U,V}},U_{M^{U,V}})\,d(U_{M^{U,V}},Y_{M^{U,V}})\right].\label{e:need2ndmart}
\end{align}
Now the reason for introducing the process CAB$'$ becomes apparent: we know that, on event $\{M^{U,V}<\min\{M^{X,U},M^{X,Y},M^{V,Y}\}\}$, until time $M^{U,V}$ there are no absorptions (by definition of this process). Also on this event we have (by the nature of the cycle) that for all $t\le M^{U,V}$, $d(X_{M^{U,V}},U_{M^{U,V}})\,d(U_{M^{U,V}},Y_{M^{U,V}})<n^2$. Thus we can bound the above expectation by coupling to a process of simple random walks on the line, $\RW(4)$, defined as follows: $\RW(4)$ takes values in $\mathbb{Z}^4$ and at each step a uniformly chosen particle jumps up or down equally likely. Let $(A_t,B_t,C_t,D_t)$ be the time-$t$ state of $\RW(4)$. The coupling is again simple: clockwise/anti-clockwise movements of particles correspond to jumps up/down. We obtain the inequality
\begin{align*}
&\mathbb{E}^{\mathrm{CAB}'}_{\bf x}\left[\indic{M^{U,V}<\min\{M^{X,U},M^{X,Y},M^{V,Y}\}}d(X_{M^{U,V}},U_{M^{U,V}})\,d(U_{M^{U,V}},Y_{M^{U,V}})\right]\\
&\le\mathbb{E}^{\mathrm{RW}(4)}_{(-d(x,u),0,d(u,v),d(u,y))}\Big[\indic{M^{B,C}<M^{A,B}\wedge M^{C,D}}\\
&\phantom{\le\mathbb{E}^{\mathrm{RW}(4)}_{(-d(x,u),0,d(u,v),d(u,y))}\Big[}\cdot\left\{(B_{M^{B,C}}-A_{M^{B,C}})(D_{M^{B,C}}-B_{M^{B,C}})\wedge n^2\right\}\Big]\\
&=\mathbb{E}^{\mathrm{RW}(4)}_{(-d(x,u),0,d(u,v),d(u,y))}\Big[\indic{M^{B,C}<M^{A,B}\wedge M^{C,D}}\\
&\phantom{=\mathbb{E}^{\mathrm{RW}(4)}_{(-d(x,u),0,d(u,v),d(u,y))}\Big[}\cdot\left\{(B_{M^{B,C}}-A_{M^{B,C}})(D_{M^{B,C}}-C_{M^{B,C}})\wedge n^2\right\}\Big]\\\\
&=\mathbb{E}^{\mathrm{RW}(4)}_{(-d(x,u),0,d(u,v),d(u,y))}[N_M],
\end{align*}
where the process, $(N_t)_{t\in\mathbb{N}_0}$, defined as 
\[
N_t=(B_t-A_t)(D_t-C_t)\wedge n^2
\]
is a bounded submartingale until the first meeting time $M$ of any two particles in $\RW(4)$. Thus by the optional stopping theorem,
\begin{align*}
1&=d(x,u)\left(d(u,y)-d(u,v)\right)\\&=\mathbb{E}^{\mathrm{RW}(4)}_{(-d(x,u),0,d(u,v),d(u,y))}[N_0]\ge\mathbb{E}^{\mathrm{RW}(4)}_{(-d(x,u),0,d(u,v),d(u,y))}[N_M].
\end{align*}
Combining with \eqref{e:need2ndmart} we obtain
\[
\Prob_{\bf x}^{\mathrm{CAB}}\big(M^{U,V}<\tau^U=\min\{\tau,M^{X,Y}\}\big)\le p,
\]
which completes the proof.
\end{proof}

We now turn to proving Proposition~\ref{P:cyclemain}. The first step is a preliminary lemma on the decay rate of an expectation involving the hitting time of zero by a discrete-time random walk on $\mathbb{Z}$. To this end, we write $\mathbb{E}^\mathrm{RW}$ and $\mathbb{P}^\mathrm{RW}$ for the expectation and probability measure associated with such a random walk and $\tau_0^X$ for the hitting time of zero by process $X$.

\begin{lemma}\label{L:LCLT}There exists a constant $c>0$ such that for each $p\in[0,1)$,
\[ \mathbb{E}^\mathrm{RW}_{\lfloor d/8\rfloor}[(\tau^X_{0})^{-1/2}(1-p)^{\tau_0^X/3}]\le \begin{cases}
cd^{-1}&\mbox{ if }p=0,\\
c\left(d^{-1}e^{-c d\sqrt p}+d^{-2}\right)&\mbox{ if }p\in(0,1).
\end{cases}\]
\end{lemma}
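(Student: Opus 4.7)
The plan is to use the known first-passage time distribution for a simple random walk on $\mathds{Z}$, combined with the local central limit theorem (Theorem~\ref{thm:LCLT}), and then reduce the problem to a Laplace-style integral estimate.

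First I would invoke the hitting-time identity $\mathds{P}^\mathrm{RW}_k(\tau_0^X=n)=\frac{k}{n}\mathds{P}(S_n=k)$, valid for $n\ge k$ with $n\equiv k\pmod 2$, where $(S_n)$ is a simple random walk on $\mathds{Z}$ started at $0$. The local CLT yields a universal $C_1>0$ with $\mathds{P}(S_n=k)\le C_1 n^{-1/2} e^{-k^2/(2n)}$ (uniformly in $k$ and $n$ of matching parity), giving $\mathds{P}^\mathrm{RW}_k(\tau_0^X=n)\le C_1 k n^{-3/2} e^{-k^2/(2n)}$. Using $(1-p)^{n/3}\le e^{-pn/3}$, the expectation is then dominated by
\[
C_1 k\sum_{n\ge k} n^{-2}\exp\!\big(-k^2/(2n)-pn/3\big),
\]
and by the unimodality of the summand this is comparable up to a universal constant to the integral $I:=\int_k^\infty n^{-2}\exp(-k^2/(2n)-pn/3)\,dn$.

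Second I would evaluate $I$ in the two regimes. For $p=0$ the substitution $u=k^2/(2n)$ directly gives $I\le 2/k^2$, so $C_1 k\cdot I\le 2C_1/k$, which is the first case after setting $k=\lfloor d/8\rfloor$. For $p\in(0,1)$ the exponent $f(n)=k^2/(2n)+pn/3$ has a unique minimum $A:=k\sqrt{2p/3}$ at $n_*:=k\sqrt{3/(2p)}\ge k$, with $k^2/(2n_*)=pn_*/3=A/2$. I split $I$ at $n_*$. On $[k,n_*]$, drop the $e^{-pn/3}$ factor and use $u=k^2/(2n)$ to get $\int_k^{n_*}n^{-2}e^{-k^2/(2n)}\,dn=(2/k^2)(e^{-A/2}-e^{-k/2})\le (2/k^2)e^{-A/2}$. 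On $[n_*,\infty)$, drop the $e^{-k^2/(2n)}$ factor and use $v=pn/3$ together with $p/A^2=3/(2k^2)$ to obtain the same bound $(2/k^2)e^{-A/2}$. Hence $I\le (4/k^2)e^{-A/2}$, and so $\mathds{E}^\mathrm{RW}_k\big[(\tau_0^X)^{-1/2}(1-p)^{\tau_0^X/3}\big]\le (4C_1/k)\,e^{-k\sqrt{p/6}}$, which with $k=\lfloor d/8\rfloor$ yields a bound of the form $c'd^{-1}e^{-cd\sqrt p}$; in particular the stated (weaker) bound $c(d^{-1}e^{-cd\sqrt p}+d^{-2})$ holds.

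The main obstacle is the clean application of the local CLT uniformly in $k$ and $n$, especially in the boundary regime where $n$ is close to $k$ and the Gaussian approximation is loose. There, one either notes that the trivial bound $\mathds{P}(S_n=k)\le 2^{-n}$ already dominates $C_1 n^{-1/2}e^{-k^2/(2n)}$ (since $\log 2>1/2$) or invokes a non-asymptotic form of Stirling. The other minor point is the sum-to-integral comparison for the unimodal summand; both are standard but need to be handled with explicit constants so as not to introduce hidden dependence on $p$.
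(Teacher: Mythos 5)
Your proposal follows the same skeleton as the paper's proof—hitting-time theorem, local CLT, then split the resulting sum/integral at roughly $d/\sqrt p$—but it is genuinely sharper in one step, and that's worth noting. Where the paper applies Theorem~\ref{thm:LCLT} in the form $\Prob_{\lfloor d/8\rfloor}^{\mathrm{RW}}(X_i=0)\le c_1 i^{-1/2}\bigl(\exp(-c_2d^2/i)+d^{-2}\bigr)$, with an \emph{additive} polynomial error term, you instead use the clean pointwise Gaussian domination $\Prob(S_n=k)\le C n^{-1/2}e^{-k^2/(2n)}$ (matching parity, all $n$). This upgrade is what removes the $d^{-2}$ in your final estimate: your argument actually gives the stronger bound $c\,d^{-1}e^{-cd\sqrt p}$ for $p\in(0,1)$, while the paper's $d^{-2}$ term is an artefact of carrying the LCLT error through. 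The price is that the clean bound is not a literal consequence of Theorem~\ref{thm:LCLT}; it needs the classical Stirling/entropy argument that the exact exponent $\tfrac{n}{2}\bigl[(1+x)\log(1+x)+(1-x)\log(1-x)\bigr]$ (with $x=k/n$) exceeds $k^2/(2n)$, and that the excess kills the prefactor blow-up $\bigl(1-x^2\bigr)^{-1/2}$ near the boundary. You flag this as the main obstacle, which is the right thing to notice, though your phrasing of the boundary case is loose: the trivial bound $\Prob(S_n=k)\le 2^{-n}$ only applies at the extreme $|k|=n$, and the general statement needs the entropy inequality (or a non-asymptotic Stirling bound), not the $2^{-n}$ fact. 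The sum-to-integral comparison and the split at the saddle $n_*=k\sqrt{3/(2p)}$ are correct; the paper does essentially the same thing directly at the level of the sum, splitting at $d/\sqrt p$. Net: same approach, a strictly stronger LCLT input, and a slightly cleaner conclusion; the gap you identified is real but standard and not a flaw in the argument.
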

\begin{proof}We first decompose over the hitting time of 0:
\begin{align*}
\mathbb{E}^\mathrm{RW}_{\lfloor d/8\rfloor}[(\tau^X_{0})^{-1/2}(1-p)^{\tau_0^X/3}]=\sum_{i=\lfloor d/8\rfloor}^{\infty} i^{-1/2}(1-p)^{i/3}\Prob_{\lfloor d/8\rfloor}^\mathrm{RW}[\tau_0^X=i].
\end{align*}
Next, by the hitting time theorem (Theorem~\ref{thm:hittingtime}) for each $i\ge \lfloor d/8\rfloor$,
\[
\Prob_{\lfloor d/8\rfloor}^\mathrm{RW}(\tau_0^X=i)=\frac1{i}\left\lfloor\frac{d}{8}\right\rfloor\Prob_{\lfloor d/8\rfloor}^\mathrm{RW}(X_i=0).
\] 
We can now apply the local central limit theorem which gives the existence of constants $c_1,c_2>0$ such that, uniformly in $i\ge \lfloor d/8\rfloor$, 
\begin{align*}
\Prob_{\lfloor d/8\rfloor}^\mathrm{RW}(X_i=0)\le c_1i^{-1/2}\left(\exp(-c_2d^2/i)+d^{-2})\right).
\end{align*}
Using this gives the existence of constants $c_2,c_3>0$ such that
\begin{align*}
\sum_{i=\lfloor d/8\rfloor}^{\infty} i^{-1/2}(1-p)^{i/3}\Prob_{\lfloor d/8\rfloor}^\mathrm{RW}[\tau_0^X=i]&\le c_3d\sum_{i=\lfloor d/8\rfloor}^{\infty} i^{-2}(1-p)^{i/3}\left(\exp(-c_2d^2/i)+d^{-2})\right)
\end{align*}
We treat separately the case $p=0$. In this case we have the bound
\begin{align*}
\mathbb{E}^\mathrm{RW}_{\lfloor d/8\rfloor}[(\tau^X_{0})^{-1/2}]\le c_3d\sum_{i=\lfloor d/8\rfloor}^{\infty} i^{-2}\left(\exp(-c_2d^2/i)+d^{-2})\right)\le c_4 d^{-1},
\end{align*}
for some $c_4>0$. For $p\in(0,1)$, we have the bound
\begin{align*}
&\mathbb{E}^\mathrm{RW}_{\lfloor d/8\rfloor}[(\tau^X_{0})^{-1/2}(1-p)^{\tau_0^X/3}]\\&\le 
c_3d\sum_{i=\lfloor d/8\rfloor}^{\infty} i^{-2}e^{-ip/3}\exp(-c_2d^2/i)+c_5d^{-2}\\
&\le 
c_3d\sum_{i=\lfloor d/8\rfloor}^{d/\sqrt p} i^{-2}\exp(-c_2d^2/i)+c_3d\sum_{i=d/\sqrt p}^{\infty} i^{-2}e^{-ip/3}+c_5d^{-2}\\
&\le c_6\left(d^{-1}e^{-c_6d\sqrt p}+d^{-2}\right),
\end{align*} 
for some $c_5,c_6>0$.
\end{proof}

Given $n\in\mathbb{N}$ and $p\in(0,1)$ (possibly depending on $n$), we define two new processes, denoted WAG and WAIG. Process WAG ({\bf w}alks with {\bf a}bsorption and {\bf g}host) is a  process $({\bf X}_t)_{t\in\mathbb{N}_0}=(X_t,U_t,V_t,Y_t)_{t\in\mathbb{N}_0}$ taking values in $(\Z\cup \Delta)^4$ where $\Delta$ denotes the cemetery state. This process starts at time 0 from a state ${\bf x}=(x,u,v,y)$ with $u\le x\le0\le y\le v$ and we say that all 4 particles are \emph{alive} unless $v-u=n$ in which case all particles except $V$ are alive, with $V$ being a \emph{ghost}. At each time $t\in\mathbb{N}$ we choose uniformly  from the alive particles and this chosen particle with probability $p$ gets absorbed into the cemetery state $\Delta$ and dies (is no longer alive), otherwise the particle chooses to jump up by one or down by one with equal probability. If this jump results in $V-U=n$, we say that $V$ becomes a ghost (and is no longer alive). If $V$ was already a ghost and the particle chosen is $U$, then $V$ evolves in the same way as $U$, i.e.\! if $U$ is absorbed into $\Delta$ then so is $V$, if $U$ jumps up/down then $V$ jumps up/down. The process terminates (no further evolution occurs) if $U$ and $X$ meet, or $Y$ and $V$ meet,  either of $X$ and $Y$ hits 0, or any particle is absorbed into the cemetery state.

Process WAIG ({\bf w}alks with {\bf a}bsorption and {\bf i}ndependent {\bf g}host) is almost identical to WAG. The only difference is that if $V$ is a ghost and $U$ is chosen and not absorbed into $\Delta$, then $V$ jumps up/down independently of $U$, rather than copying its move.

In each of these systems we use notation $\tau^A$ to denote the absorption time into $\Delta$ of particle $A\in\{X,U,V,Y\}$, $\tau_0^A$ the time for particle $A$ to hit 0, $M^{A,B}$ to denote meeting time of particles $A,B\in\{X,U,V,Y\}$, and $G^V$ to denote the time that $V$ becomes a ghost
(any of these times is infinite if the process terminates prior to their occurrence).
The following lemma relates the probability that $X$ or $Y$ hits 0 before time $\tau:=M^{X,U}\wedge M^{V,Y}\wedge \tau^Y$ in these two processes. 

\begin{lemma}\label{L:WAG2WAIG}For each ${\bf x}=(x,u,v,y)\in \Z^4$ with $u\le x\le y\le v$,
\[\Prob_{{\bf x}}^{\mathrm{WAG}}(\tau^{X}_0\wedge \tau_0^Y <M^{X,U}\wedge M^{V,Y}\wedge \tau^Y) \leq \Prob
_{{\bf x}}^{\mathrm{WAIG}}(\tau^{ X}_0\wedge \tau_0^Y < M^{X,U}\wedge M^{V,Y}\wedge \tau^Y).\]
\end{lemma}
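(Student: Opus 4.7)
The plan is to couple WAG and WAIG so that all random choices---the particle selected at each step, the absorption decisions, and the $\pm 1$ jump directions of $X$, $U$, $Y$, and of $V$ for times $t<G^V$---are identical in both processes; only $V$'s post-$G^V$ increments differ. In WAG one then has $V^1_t=U_t+n$ for all $t\ge G^V$, whereas in WAIG $V^2$ performs independent Rademacher $\pm 1$ jumps at the $U$-move times, starting from the common value $V^2_{G^V}=U_{G^V}+n$. Since before $G^V$ the processes are pathwise identical, the event
$A:=\{\tau_0^X\wedge\tau_0^Y<M^{X,U}\wedge M^{V,Y}\wedge\tau^Y\}$
coincides in the two processes whenever it is already decided by time $G^V$, so it suffices to handle the complementary event on which $G^V$ occurs strictly before the relevant stopping times.

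I would then condition on the $\sigma$-algebra $\mathcal G$ generated by the trajectories of $X$ and $Y$, the sequence of chosen particles, all absorption decisions, and the trajectory of $U$ on $[0,G^V]$. Under $\mathcal G$ the quantities $T:=\tau_0^X\wedge\tau_0^Y$, $\tau^Y$, $G^V$, and the starting point $U_{G^V}$ are deterministic, and the remaining randomness consists of two independent iid Rademacher sequences of known length: the post-$G^V$ signs $\varepsilon^U$ of $U$'s jumps, and (in WAIG only) the signs $\varepsilon^{V^2}$ of $V^2$'s jumps. A translation by $n$ identifies the conditional law of $V^2-n$ with that of $U$ (same starting point, same jump schedule, iid $\pm 1$ increments), and combined with independence of the two sign sequences in WAIG this factorises
\[
\Prob^{\mathrm{WAIG}}(A\mid\mathcal G)=\Prob(U\text{ avoids }X\mid\mathcal G)\,\Prob(U\text{ avoids }Y-n\mid\mathcal G)\,\indic{T<\tau^Y},
\]
where ``$U$ avoids $W$'' means $\{U_t\ne W_t$ for all $G^V\le t\le T\}$. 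On the WAG side, the identity $V^1_t=U_t+n$ turns $\{M^{V^1,Y}>T\}$ into $\{U\text{ avoids }Y-n\}$, giving
\[
\Prob^{\mathrm{WAG}}(A\mid\mathcal G)=\Prob(U\text{ avoids }X\text{ and }Y-n\mid\mathcal G)\,\indic{T<\tau^Y}.
\]

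The main step is then a Harris/FKG negative-correlation argument. The termination rules guarantee $Y_{G^V}-n<U_{G^V}<X_{G^V}$ strictly, and because $U$, $X$, and $Y-n$ evolve by step size one (only one of them moving per chosen step), $U$ cannot cross $X$ (resp.\ $Y-n$) without meeting it; so the two avoidance events are exactly $\{U_t<X_t\}$ and $\{U_t>Y_t-n\}$, each a function of $\varepsilon^U$. With the coordinate-wise order $-1<+1$, flipping any entry of $\varepsilon^U$ from $-1$ to $+1$ raises $U_t$ pointwise at all subsequent times, hence $\{U<X\}$ is monotone decreasing in $\varepsilon^U$ while $\{U>Y-n\}$ is monotone increasing. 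Harris's inequality for the product Rademacher measure then yields
\[
\Prob(U\text{ avoids }X\text{ and }Y-n\mid\mathcal G)\le\Prob(U\text{ avoids }X\mid\mathcal G)\,\Prob(U\text{ avoids }Y-n\mid\mathcal G),
\]
which is exactly $\Prob^{\mathrm{WAG}}(A\mid\mathcal G)\le\Prob^{\mathrm{WAIG}}(A\mid\mathcal G)$; taking expectations over $\mathcal G$ proves the lemma. The main technical obstacle is bookkeeping: one has to check carefully that, once the chosen-particle schedule and absorption decisions are frozen inside $\mathcal G$, the two avoidance events genuinely factor through $\varepsilon^U$ with the claimed coordinate-monotonicity, so that FKG applies to the right product measure.
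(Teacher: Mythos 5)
Your proof is correct and takes essentially the same route as the paper: both proofs couple WAG and WAIG identically up to the ghost time $G^V$, condition on all randomness except the Rademacher increments driving $U$'s post-$G^V$ motion (and hence, in WAG, $V$'s), observe that the $\{U\text{ avoids }X\}$ event is coordinate-wise decreasing while $\{U\text{ avoids }Y-n\}$ is coordinate-wise increasing, and invoke Harris/FKG to split the joint probability. The paper wraps this FKG core in an optional-stopping step (for the bounded martingale $\Prob_{{\bf X}_t}^{\mathrm{WAG}}(\tau_0^X\wedge\tau_0^Y<\tau)$ stopped at $T\wedge G^V$) before coupling the two processes, whereas you go straight to the coupling and conditioning; the underlying argument is the same.
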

\begin{proof}
We first note that $(\Prob_{{\bf X}_t}^{\mathrm{WAG}}(\tau_0^{X}\wedge \tau_0^Y<\tau))_{t\in\mathbb{N}_0}$ is a bounded martingale until time $T:=\tau_0^X\wedge \tau_0^Y\wedge \tau$ and so by the optional stopping theorem at time $S:=T\wedge G^{V}$,
\begin{align}
\notag\Prob_{{\bf x}}^{\mathrm{WAG}}(\tau^X_0\wedge \tau_0^Y <\tau)&=\E^{\mathrm{WAG}}[\Prob^{\mathrm{WAG}}_{{\bf X}_{S}}(\tau_0^X\wedge \tau_0^Y<\tau)]\\
&=\E^{\mathrm{WAG}}\left[\indic{\tau_0^X\wedge \tau_0^Y<\tau\wedge G^V}+\indic{G^V<T}\Prob^{\mathrm{WAG}}_{{\bf X}_{G^V}}(\tau_0^X\wedge \tau_0^Y<\tau)\right].\label{eq:WAG}
\end{align}
By the same reasoning we also have
\begin{align}
\Prob_{{\bf x}}^{\mathrm{WAIG}}(\tau^X_0\wedge \tau_0^Y <\tau)&=\E^{\mathrm{WAIG}}\left[\indic{\tau_0^X\wedge \tau_0^Y<\tau\wedge G^V}+\indic{G^V<T}\Prob^{\mathrm{WAIG}}_{{\bf X}_{G^V}}(\tau_0^X\wedge \tau_0^Y<\tau)\right].\label{eq:WAIG}
\end{align}
Notice that at time $G^V$, we know that $V-U=n$. So consider a state ${\bf a}=(a,b,n+b,c)$. We will show that
\begin{align}\label{eq:WAGWAIG}
\Prob^{\mathrm{WAG}}_{\bf a}(\tau_0^X\wedge \tau_0^Y<\tau)\le \Prob^{\mathrm{WAIG}}_{\bf a}(\tau_0^X\wedge \tau_0^Y<\tau).
\end{align}
Once we have this inequality we can complete the proof by taking a coupling of WAG and WAIG: we can couple them so that the absorption times, $G^V$ and the trajectories of all particles are the same in both processes up to time $G^V$. It follows that under this coupling if each of the events   $\{\tau_0^X\wedge \tau_0^Y<\tau\wedge G^V\}$ and $\{G^V<T\}$ hold in WAG, they hold in WAIG and vice-versa. Hence the indicators in equations \eqref{eq:WAG} and \eqref{eq:WAIG} are equal.

It remains to show \eqref{eq:WAGWAIG}.
Note that starting WAG from ${\bf a}$ means that $V$ is immediately a ghost. To show \eqref{eq:WAGWAIG} we will make use of the FKG inequality for partially ordered sets. In process WAG, we denote by $(Q_i)_{i\in\mathbb{N}}$ a sequence of Rademacher random variables ($\pm 1$ with probability 1/2 each), which will be used (in the obvious way) to generate the movement of $U$ (and hence, since $V$ starts as a ghost, of $V$). To highlight the dependence of $U$ and $V$ on ${\bf Q}$ we will sometimes write these processes as $U({\bf Q})$ and $V({\bf Q})$ (note that if $U$ has jumped $k$ times by time $t$ then $U_t({\bf Q})=u+\sum_{i=1}^k Q_i$).

Let  $J^U$ be the sequence of times at which particle $U$ is chosen to evolve (jump or be absorbed) and 
$W = ((X_t,Y_t)_{t\in\mathbb{N}_0},\tau^{U},\tau^{V},J^U)$ and note that $\tau^X$ and $\tau^Y$ are measurable with respect to the sigma algebra generated by the trajectories of $X$ and $Y$. For ${\bf z}\in \{-1,+1\}^\mathbb{N}$, consider the functions
\begin{align*}
f({\bf z})&:=\indic{\tau_0^X\wedge \tau_0^Y<M^{X,U({\bf z})}\wedge \tau^X\wedge \tau^{U({\bf z})}}\\
g({\bf z})&:=\indic{\tau_0^X\wedge \tau_0^Y<M^{V({\bf z}),Y}\wedge \tau^{V({\bf z})}\wedge \tau^Y}.
\end{align*}

For each $m\in\mathbb{N}$, we define a partial order on the sample space $\Omega_m=\{-1,+1\}^m$ of $(Q_i)_{1\le i\le m}$. For $\omega_1,\omega_2\in\Omega_m$, we say that $\omega_1\le \omega_2$ iff $\omega_2$ can be obtained from $\omega_1$ by flipping some number of $-1$s in $\omega_1$ to $+1$s. Writing ${\bf 1}$ for an infinite vector of 1s and ${\bf Q}\in \Omega_m$, we observe that $f(({\bf Q},{\bf 1}))$ is decreasing with respect to this partial order. Conversely, $g_m$ is increasing. Let $\hat \mu_m$ be the measure on $\{-1,+1\}^\mathbb{N}$ which assigns non-zero and equal mass to vectors ${\bf z}$ with ${\bf z}(i)=1$ for all $i>m$, and let $\mu_m$ be $\hat\mu_m$ conditioned on $W$. By the FKG inequality we obtain that for each $m\in\mathbb{N}$,
\[
\mu_m(fg)\le\mu_m(f)\mu_m(g).
\] Sending $m\to\infty$\footnote{the limits exist since $\mu_m(fg)$, $\mu_m(f)$ and $\mu_m(g)$ remain unchanged for all $m\ge \tau_0^X\wedge\tau_0^Y$} we deduce that 
\begin{align*}
&\Prob^{\mathrm{WAG}}_{\bf a}(\tau_0^X\wedge \tau_0^Y<\tau\mid W)\\&\le \Prob^{\mathrm{WAG}}_{\bf a}(\tau_0^X\wedge \tau_0^Y<M^{X,U}\wedge \tau^X\wedge \tau^{U}\mid W)\, \Prob^{\mathrm{WAG}}_{\bf a}(\tau_0^X\wedge \tau_0^Y<M^{V,Y}\wedge \tau^V\wedge \tau^{Y}\mid W)\\
&=\Prob^{\mathrm{WAIG}}_{\bf a}(\tau_0^X\wedge \tau_0^Y<M^{X,U}\wedge \tau^X\wedge \tau^{U}\mid W)\, \Prob^{\mathrm{WAIG}}_{\bf a}(\tau_0^X\wedge \tau_0^Y<M^{V,Y}\wedge \tau^V\wedge \tau^{Y}\mid W)\\
&=\Prob^{\mathrm{WAIG}}_{\bf a}(\tau_0^X\wedge \tau_0^Y<\tau\mid W),
\end{align*}
where the second equality comes from the fact that given $W$, $U$ and $V$ move independently under WAIG. Taking an expectation gives \eqref{eq:WAGWAIG} and completes the proof.
\end{proof}

Now we bound the probability that, under WAIG, at least one of particles $X$ and $Y$ hits 0 before time $\tau$.
\begin{lemma}\label{L:WAIGbound}Suppose $d\ge 4$, $u,x,y,v$ are integers satisfying $u\le x\le -\frac{d}{2}+1\le \frac{d}{2}-\frac12\le y\le v$ and set ${\bf x}=(x,u,v,y)$. Then there exists a constant $c>0$ (not depending on $d$) such that 
\[\Prob
_{{\bf x}}^{\mathrm{WAIG}}(\tau^{ X}_0\wedge \tau_0^Y < \tau)\le cd^{-2}\left(e^{- c d\sqrt p}+d^{-1}\right).\]
\end{lemma}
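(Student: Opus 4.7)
The plan is to exploit the independence of the four particles $X, U, V, Y$ in WAIG. Viewed in continuous time (with each particle given its own independent Poisson clocks for moves and for absorption), the walks evolve as genuinely independent random walks with independent geometric absorption up to the first terminating event, which lets us factor probabilities over disjoint particle groups. First, by the union bound,
\[
\Prob^{\mathrm{WAIG}}_{{\bf x}}(\tau_0^X \wedge \tau_0^Y < \tau) \le \Prob^{\mathrm{WAIG}}_{{\bf x}}(\tau_0^X < \tau) + \Prob^{\mathrm{WAIG}}_{{\bf x}}(\tau_0^Y < \tau),
\]
and the reflection $(x, u, v, y) \mapsto (-y, -v, -u, -x)$ is a symmetry of the WAIG dynamics that swaps the two events; this reduces matters to bounding $\Prob^{\mathrm{WAIG}}_{{\bf x}}(\tau_0^X < \tau)$ by half of the target.

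For $\{\tau_0^X < \tau\}$ to occur, $\tau_0^X$ must precede $M^{X,U}$, $M^{V,Y}$, and any absorption. Conditioning on $X$'s trajectory and survival up to $\tau_0^X$, the independence of $U$ from $(V, Y)$ lets us factor the remaining conditional probability as $P_U \cdot P_{VY}$. For $P_{VY}$: since $V - Y$ is a rate-$2$ random walk starting at $v - y \ge 1$, the hitting-time theorem gives $\Prob(M^{V,Y} > t) \le C\min(1, (v-y)/\sqrt t)$, together with a $(1-p)^{\Theta(t)}$ factor from the absorption survival of $V$ and $Y$. For $P_U$: the key observation is that in continuous time $X + U$ and $X - U$ are independent rate-$2$ random walks, so the joint event $\{\tau_0^X < M^{X,U}\}$ becomes a two-boundary exit problem for two independent walks; a hitting-time analysis together with absorption survival of $U$ yields a comparable bound $P_U(t) \le C\min(1, (x-u)/\sqrt t)(1-p)^{\Theta(t)}$, and in parallel a time-free ``wedge'' estimate $\Prob(\tau_0^X < M^{X,U}) \le C(x-u)/d$ (using $|x+u| \ge d-2$).

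Substituting these bounds into $\Prob(\tau_0^X < \tau) \le \mathds{E}[\indic{\tau_0^X<\infty,\,X\text{ alive}}\,P_U\,P_{VY}]$ and splitting the expectation into regimes based on whether $x - u$ and $v - y$ exceed $\sqrt{\tau_0^X}$ (equivalently, $d$), the problem reduces in each regime to an expectation of the form $\mathds{E}[(\tau_0^X)^{-\alpha}(1-p)^{\tau_0^X/3}]$ for $\alpha \in \{1/2, 1\}$. The case $\alpha = 1/2$ is exactly Lemma~\ref{L:LCLT}, and the case $\alpha = 1$ follows from an essentially identical local-CLT and hitting-time-theorem computation giving a bound of order $d^{-2}e^{-cd\sqrt p} + d^{-3}$; collecting these estimates with the appropriate $(x-u),(v-y)$ factors then produces the target bound $Cd^{-2}(e^{-cd\sqrt p} + d^{-1})$. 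The main obstacle is the $P_U$ estimate together with the case analysis at large $x-u$ or $v-y$: naive conditioning on $X$'s path correlates $X$ and $U$ in an unwieldy way, and it is the $(X+U, X-U)$ decoupling into independent walks that makes the two-boundary exit analysis tractable; handling the regime $(x-u) \gtrsim d$ requires combining the wedge bound with the absorption survival of the remaining particles to retain the extra $d^{-1}$ factor.
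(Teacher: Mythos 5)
The overall skeleton you propose — union bound, symmetry to reduce to $\{\tau_0^X<\tau\}$, factoring over particle groups after conditioning on $X$ — is reasonable, and your $P_{VY}$ estimate goes through. But there is a genuine gap in the $P_U$ step, and it is exactly the point you flag as the "main obstacle." You define $P_U$ as a conditional probability given $X$'s full trajectory, which is what makes the factorization $P_U\cdot P_{VY}$ valid. You then propose to bound $P_U$ by decoupling $X+U$ and $X-U$ into independent rate-$2$ walks. These two moves cannot coexist: once $X$'s path is conditioned on, $X+U$ and $X-U$ differ by the deterministic quantity $2X$ and are no longer independent, so the $(X+U,X-U)$ wedge-exit analysis only gives an \emph{unconditional} bound on $\Prob(\tau_0^X<M^{X,U},\ U\text{ alive})$, not the trajectory-wise bound $P_U(t)\le C\min(1,(x-u)/\sqrt t)(1-p)^{\Theta(t)}$ your factorization needs. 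No such uniform-in-trajectory bound holds: a path of $X$ that moves monotonically away from $u$ toward $0$ has $P_U\approx1$ while $\tau_0^X$ is small; a path that loiters near $u$ has $P_U$ near $0$.

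The paper's proof avoids this tension with a two-phase decomposition of the hitting of $0$ by $X$: first $X$ must hit $-\lfloor d/4\rfloor$ (at a time $\tau^X_{-\lfloor d/4\rfloor}$), and only afterwards does it travel from $-\lfloor d/4\rfloor$ to $0$ (an additional duration $T^X$). The proof demands that $X$ avoid $U$ during the \emph{first} phase and that $V,Y$ fail to meet (and $Y$ fail to be marked) during the \emph{second} phase. Since these two events depend on the process over disjoint time intervals, they are genuinely independent, and each contributes a factor of order $d^{-1}$ (resp.\ $d^{-1}e^{-cd\sqrt p}+d^{-2}$) via Lemma~\ref{L:LCLT}, applied once per phase at exponent $-1/2$. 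Without the split you would instead be forced into an expression tied to a single non-factoring event over the full time window — the tangle you yourself describe as unwieldy. Two smaller points you omit: the ghost mechanism of WAIG (the paper handles the resulting uneven selection rates by stochastic domination of the jump counts $C^{X,U}$, $C^{V,Y}$ by binomials), and the replacement of absorption by "marking" via a coupling to the auxiliary WMIG process, which keeps the movement dynamics unaffected and is what makes the disjoint-window independence argument clean.
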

\begin{proof}
We introduce a process denoted WMIG ({\bf w}alks with {\bf m}arking and {\bf i}ndependent {\bf g}host). This process is similar to WAIG except no particles are absorbed (when a particle is chosen it always jumps) and the process does not terminate when particles meet (and the dynamics of particles does not change when any two meet) or when $X$ or $Y$ hit 0. In addition, each time particle $Y$ is chosen,  with probability $p$ it is marked (if not already) -- this does not affect its movement at this time or afterwards. We denote by $\tau^Y_m$ the time at which $Y$ is marked.

 First we note that we can couple a WAIG process with a WMIG process if they start from the same configuration up until the time that the WAIG process terminates so that until this time particles move in the same way in the two processes. Since prior to time $\tau^X_0\wedge \tau_0^Y \wedge\tau$ the WAIG process does not terminate, we deduce that 
 \begin{align}\notag
&\Prob_{{\bf x}}^{\mathrm{WAIG}}(\tau^{ X}_0\wedge \tau_0^Y < \tau)\\\notag
&\le \Prob_{{\bf x}}^{\mathrm{WAIG}}(\tau^{ X}_0\wedge \tau_0^Y < M^{X,U}\wedge M^{V,Y}\wedge \tau^Y)
\\\notag&=\Prob
_{{\bf x}}^{\mathrm{WMIG}}(\tau^{ X}_0\wedge \tau_0^Y < M^{X,U}\wedge M^{V,Y}\wedge \tau^Y_m)\\&\le \Prob
_{{\bf x}}^{\mathrm{WMIG}}(\tau^{ X}_0 < M^{X,U}\wedge M^{V,Y}\wedge \tau^Y_m)+\Prob
_{{\bf x}}^{\mathrm{WMIG}}(\tau^{ Y}_0 < M^{X,U}\wedge M^{V,Y}\wedge \tau^Y_m).\label{eq:WAIG1}
 \end{align}
We now consider bounding $\Prob
_{{\bf x}}^{\mathrm{WMIG}}(\tau^{ X}_0 < M^{X,U}\wedge M^{V,Y}\wedge \tau^Y)$ (the other term can be bounded in the same way).

Given that $X_0\le -\frac{d}{2}-1$, in order for $X$ to hit 0 it must hit $-\lfloor d/4\rfloor$, a time we denote $\tau^X_{-\lfloor d/4\rfloor}$. We set $ T^X:=\tau^X_0-\tau^X_{-\lfloor d/4\rfloor}$ be the time it takes for $X$ to hit 0 from $-\lfloor d/4\rfloor$. By the strong Markov property $\tau^X_{-\lfloor d/4\rfloor}$ and $T^X$ are independent. We have
\begin{align}\label{eq:WAIG2}
&\Prob
_{{\bf x}}^{\mathrm{WMIG}}(\tau^{ X}_0 < M^{X,U}\wedge M^{V,Y}\wedge \tau^Y)\le \Prob
_{{\bf x}}^{\mathrm{WMIG}}(\tau^{ X}_{-\lfloor d/4\rfloor} < M^{X,U},\, T^X< M^{V,Y}\wedge \tau^Y)
\end{align}

The two events appearing in the probability on the right-hand side above are in fact independent (which is the reason for introducing the WMIG process). We make this clear with the following argument, which also gives us a way to bound this probability.

Let $C^{X,U}$ denote the number of times during time interval $[0,\tau^X_{-\lfloor d/4\rfloor})$ that process WMIG chooses either $X$ or $U$ to evolve. Let $C^{V,Y}$ denote the number of times during time interval $[\tau^X_{-\lfloor d/4\rfloor},\tau^X_0)$ that  $V$ or $Y$ jump under WMIG (if $V$ becomes a ghost it can no longer be chosen but still jumps if $U$ is chosen). Note that these two random variables are independent (they correspond to disjoint time intervals). Given $\tau^X_{-\lfloor d/4\rfloor}$, we have $C^{X,U}$ stochastically dominates a $\mathrm{Bin}(\tau^X_{-\lfloor d/4\rfloor},1/2)$ random variable (the stochastic domination comes from the fact that once $V$ becomes a ghost, we choose either $X$ or $U$ at each step with probability $2/3$). Similarly, given $T^X$, we have $C^{V,Y}$ stochastically dominates a $\mathrm{Bin}(T^X,1/2)$ random variable (once $V$ becomes a ghost, we choose either $Y$ or $U$ (choosing $U$ means $V$ also moves) at each step with probability $2/3$). 

For each $t\in\mathbb{N}_0$, for process WMIG set $d^1_t=X_t-U_t$ and $d^2_t=V_t-Y_t$ and for each $i\in\{1,2\}$ let $(D^i_t)_{t\in\mathbb{N}_0}$  be the process $(d^i_t)_{t\in\mathbb{N}_0}$ observed only when $d^i_t$ updates (i.e.\! the value changes). It is immediate that processes $(D^1_t)_{t\in\mathbb{N}_0}$ and $(D^2_t)_{t\in\mathbb{N}_0}$  are independent simple random walks on $\mathbb{Z}$ started from state 1. For each $i\in\{1,2\}$, let $\tau^{D^i}_0=\min\{t\ge0:\,D^i_t=0\}$ and let $\tau^D$ be the number of updates of $d^2_t$ until $Y$ is marked under WMIG so that $\tau^D\sim\mathrm{Geom}(1/2)$.

Then we have that under WMIG,\begin{align}\label{eq:WAIG3}\{\tau^X_{-\lfloor d/4\rfloor}<M^{X,U}\}=\{C^{X,U}<\tau^{D^1}_0\},\qquad \{T^X<M^{V,Y}\wedge\tau^Y\}=\{C^{V,Y}<\tau^{D^2}_0\wedge \tau^D\},\end{align}
and it is now clear that these events are independent. 

It is standard that there exists $c_1>0$ such that $\Prob^\mathrm{RW}_1(\tau_0^{D^1}>t)\sim c_1 t^{-1/2}$ as $t\to\infty$ (the superscript RW emphasises that $(D_t^1)_t$ is a discrete-time random walk on $\mathbb{Z}$). Hence there exists $c_2>0$ such that $\Prob_{\bf x}^\mathrm{WMIG}(C^{X,U}<\tau_0^{D^1})\le  c_2 \,\mathbb{E}_{\bf x}^\mathrm{WMIG}[(C^{X,U})^{-1/2}]$. By the maximal Azuma's inequality, there exists $c_3>0$ such that $\tau^X_{-\lfloor d/4\rfloor}$ is at least $d^2/(c_3\log d)$ with probability at least $1-d^{-3}$. Further, by a Chernoff bound, there exists $c_4>0$ such that given $\tau^X_{-\lfloor d/4\rfloor}$, $C^{X,U}$ is at least $\frac13 \tau_{-\lfloor d/4\rfloor}^X$ with probability at least $e^{-c_4\tau^X_{-\lfloor d/4\rfloor}}$. Putting these bounds together gives the existence of $c,\bar c>0$ such that
\begin{align}\notag
\Prob_{\bf x}^\mathrm{WMIG}(C^{X,U}<\tau_0^{D^1})\le c\left(\mathbb{E}^\mathrm{WMIG}_{\bf x}[(\tau^X_{-\lfloor d/4\rfloor})^{-1/2}]+d^{-3}\right)
&\le c\left(\mathbb{E}^\mathrm{RW}_{\lfloor d/4\rfloor}[(\tau^X_{0})^{-1/2}]+d^{-3}\right)\\
&\le \bar cd^{-1},\label{eq:WAIG4}
\end{align}
where we have used Lemma~\ref{L:LCLT} in the last inequality.
Similarly, given $T^X$, $C^{V,Y}$ is at least $\frac13 T^X$ with probability at least $e^{-c_4 T^X}$. Note also that under WMIG $\tau_0^{D^2}$ and $\tau_0$ are independent (since marking does not affect movement). Then we have the bound
\begin{align}
\Prob_{\bf x}^\mathrm{WMIG}(C^{V,Y}<\tau_0^{D^2}\wedge \tau^D)
&\le c\left(\mathbb{E}^\mathrm{RW}_{\lfloor d/4\rfloor}[(\tau^X_{0})^{-1/2}(1-p)^{\tau_0^X/3}]+d^{-3}\right)\notag\\
&\le \hat c\left(d^{-1}e^{-\hat c d\sqrt p}+d^{-2}\right),\label{eq:WAIG5}
\end{align}
for some $\hat c>0$, where we have applied Lemma~\ref{L:LCLT} in the last inequality. Combining equations \eqref{eq:WAIG1}--\eqref{eq:WAIG5} completes the proof.\qedhere
\end{proof}

We now present the proof of Proposition~\ref{P:cyclemain}.
\begin{proof}[Proof of Proposition~\ref{P:cyclemain}]Recall that $d:=d(y,x)\ge d(u,v)$.
The first step is to couple this CAB process with a WAG process denoted $({\bf X}'_t)_{t\ge0}=(X'_t,U'_t,V'_t,Y'_t)_{t\ge0}$. Let $m$ be the midpoint between $x$ and $y$, defined to satisfy 
\[
d(m,x)=\left\lfloor\frac{d}{2}\right\rfloor,\quad d(y,m)=\left\lceil\frac{d}{2}\right\rceil.
\] The coupled WAG process is initialised from\[(X'_0,U'_0,V'_0,Y'_0)={\bf x}':=(-d(m,x),-d(m,u),d(v,m),d(y,m)).\]Up until WAG terminates, the absorption times of particles in ${\bf X}'$ to the cemetery state $\Delta$ are the same as their counterparts in ${\bf X}$. If particle $X$ jumps clockwise (resp. anticlockwise), then particle $X'$ jumps down (resp. up) by 1 (similarly for other particles). Then the meeting time of particles $U$ and $V$ coincides with the time that particle $V'$ becomes a ghost.

Notice that for $X$ and $Y$ to meet before either $X$ meets $U$ or $Y$ meets $V$, at least one of $X$ and $Y$ must hit the midpoint $m$, times we denote by $\tau^X_m$ and $\tau^Y_m$. Therefore we have 
\[
\big\{M^{X,Y}\vee M^{U,V}< \tau\big\}\subseteq\big\{(\tau^X_m\wedge\tau^Y_m)\vee M^{U,V}<\tau\big\}.
\]
However, the coupling forces
\[
\big\{(\tau^X_m\wedge\tau^Y_m)\vee M^{U,V}<\tau\big\}=\big\{(\tau^{X'}_0\wedge\tau^{Y'}_0)\vee G^{V'}<\tau'\big\}\subseteq\big\{\tau^{X'}_0\wedge\tau^{Y'}_0<\tau'\big\}
\]
%
where $\tau':=\min\{\tau^{X'},\tau^{U'},\tau^{V'},\tau^{Y'},M^{X',U'},M^{V',Y'}\}$.
Hence we deduce that 
\begin{align*}
\Prob_{\bf x}^\mathrm{CAB}(M^{X,Y}\vee M^{U,V}<\tau)\le \Prob_{{\bf x}'}^\mathrm{WAG}(\tau^{X'}_0\wedge\tau^{Y'}_0<\tau').
\end{align*}
The proof is completed by combining this with Lemmas~\ref{L:WAG2WAIG} and~\ref{L:WAIGbound} (which can be applied since $n\ge8$ and so $d\ge4$, $d(m,x)\ge \frac{d}{2}-1$ and $d(y,m)\ge\frac{d}{2}-\frac12$).\end{proof}
\appendix

\begin{appendix}
\section*{}
\label{S:app}
We present here some results which are used throughout. The first is an application of Stein's method to obtain conditions (involving variance and covariance) for Gaussian convergence.
\begin{theorem}[R\"ollin \cite{rollin2008note}]\label{thm:rollin}
Suppose $W$, $W'$ are two random variables with the same distribution, $\E(W) = 0$, $\var(W) = 1$ and $\E(W'|W) = (1-\lambda)W$. Then for $\delta := \sup_{s \in \R} |\Prob(W \leq s)-\Phi(s)|$,
\begin{align}\label{eqn:rollin}
\delta \leq \frac{12}{\lambda} \sqrt{\var(\E((W'-W)^2|W))}+32\frac{A^3}{\lambda}+6\frac{A^2}{\sqrt{\lambda}},
\end{align}
where $A$ is such that $|W'-W|\leq A$, and $\Phi$ is the cumulative distribution of a standard normal distribution on $\R$.
\end{theorem}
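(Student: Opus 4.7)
The plan is to invoke Stein's method for normal approximation via the exchangeable-pair construction. For each $s\in\R$, let $f_s$ be the bounded solution of the Stein equation $f_s'(w)-w f_s(w)=\Ind_{\{w\le s\}}-\Phi(s)$; standard estimates give $\|f_s\|_\infty\le\sqrt{2\pi}/4$, $\|f_s'\|_\infty\le 1$, and $|f_s''|$ is bounded by an affine function of $|w|$ off the point $s$ (where it carries a unit jump). Then $\Prob(W\le s)-\Phi(s)=\E[f_s'(W)-W f_s(W)]$, and the goal becomes controlling the right-hand side uniformly in $s\in\R$.

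The key identity comes from exchangeability. Since $W$ and $W'$ have the same law, $\E[g(W,W')]=0$ for every antisymmetric $g$. Applied to $g(w,w')=(w-w')(f_s(w)+f_s(w'))$ this yields
\begin{align*}
\E[(W-W')f_s(W)]=\tfrac12\,\E[(W-W')(f_s(W)-f_s(W'))].
\end{align*}
On the other hand, $\E[(W-W')f_s(W)]=\E[(W-\E(W'\mid W))f_s(W)]=\lambda\,\E[Wf_s(W)]$. Combining these and Taylor-expanding $f_s(W')=f_s(W)+(W'-W)f_s'(W)+R$ gives
\begin{align*}
\E[f_s'(W)-W f_s(W)]=\E\!\left[f_s'(W)\Bigl(1-\tfrac{\E[(W'-W)^2\mid W]}{2\lambda}\Bigr)\right]+\tfrac{1}{2\lambda}\E[(W-W')R].
\end{align*}
The identity $\E[(W-W')^2]=2\lambda$, which follows from $\Var W=1$ together with $\E(W'\mid W)=(1-\lambda)W$, forces the multiplier in the first term to have mean zero. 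Cauchy-Schwarz and $\|f_s'\|_\infty\le 1$ then bound that term by $\sqrt{\Var(\E[(W'-W)^2\mid W])}/(2\lambda)$, which is the first contribution on the right-hand side of \eqref{eqn:rollin}.

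The delicate part is the remainder. Using $|W-W'|\le A$ the bound $|R|\le\tfrac12(W'-W)^2\,\|f_s''\|_\infty$ and $\E[(W'-W)^2]=2\lambda$ would give a contribution of order $A\|f_s''\|_\infty$, but $f_s''$ is unbounded at $s$. The standard remedy is to replace $\Ind_{\{\cdot\le s\}}$ by a Lipschitz smoothing $h_{s,\varepsilon}$ (equal to $1$ on $(-\infty,s]$, $0$ on $[s+\varepsilon,\infty)$ and linear in between), producing a Stein solution $f_{s,\varepsilon}$ with $\|f_{s,\varepsilon}''\|_\infty\lesssim 1/\varepsilon$, at a smoothing cost of order $\varepsilon$ in $\delta$ (via the bounded density of $Z$). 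Inserting this gives contributions of order $A^3/\lambda$ (from the ``bulk'' cubic remainder obtained by bounding $|(W-W')R|\le A(W'-W)^2\|f_{s,\varepsilon}''\|_\infty$ and then using $\E(W'-W)^2=2\lambda$) and $A^2/\varepsilon$ (from the approximation near $s$); optimising $\varepsilon\asymp A\sqrt\lambda$ balances these and yields exactly the $A^3/\lambda+A^2/\sqrt\lambda$ terms in \eqref{eqn:rollin}.

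The main obstacle is the Kolmogorov-distance smoothing. Working directly with $\Ind_{(-\infty,s]}$ fails because of the discontinuity of $f_s''$; working with a smoothing $h_{s,\varepsilon}$ introduces the $1/\varepsilon$ blow-up in $\|f_{s,\varepsilon}''\|_\infty$ which must be balanced against the approximation error. Keeping the constants explicit, uniformly in $s$, and doing so under the clean hypothesis $\E(W'\mid W)=(1-\lambda)W$ (rather than the more general linear-regression identity with a residual term used in earlier versions of the exchangeable-pair method) is the content of R\"ollin's streamlining.
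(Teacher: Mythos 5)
The paper does not give a proof of this result; it is cited as Theorem~\ref{thm:rollin} directly from R\"ollin. Comparing your sketch to R\"ollin's actual argument, there is a genuine gap in the key identity. You write ``Since $W$ and $W'$ have the same law, $\E[g(W,W')]=0$ for every antisymmetric $g$,'' and then apply this to $g(w,w')=(w-w')(f_s(w)+f_s(w'))$. But equality of the two \emph{marginal} laws does not imply this; it is the \emph{exchangeability} of the pair $(W,W')$ (i.e.\ $(W,W')\overset{\mathcal D}{=}(W',W)$) that gives $\E[g(W,W')]=0$ for antisymmetric $g$, and exchangeability is a strictly stronger property. (A quick counterexample: take $W$ uniform on $\{1,2,3\}$ and $W'=W+1 \bmod 3$; both marginals are uniform, but for the antisymmetric $g$ with $g(1,2)=g(2,3)=g(1,3)=1$ one gets $\E[g(W,W')]=1/3\neq 0$.) This is not a cosmetic point here: the theorem as stated assumes only equal marginals, and \emph{removing the exchangeability hypothesis is the entire content of R\"ollin's note} (its title is ``A note on the exchangeability condition in Stein's method''). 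Your closing remark attributes the novelty to ``the clean hypothesis $\E(W'\mid W)=(1-\lambda)W$'' rather than a general linear-regression identity, which misses what is actually being weakened.

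The way R\"ollin circumvents the antisymmetrisation is to work with an antiderivative. Set $F$ with $F'=f_s$. Equal marginals give $\E[F(W')]=\E[F(W)]$, i.e.\ $\E[F(W')-F(W)]=0$. Taylor-expanding $F(W')$ around $W$ to second order and using $\E[(W'-W)f_s(W)]=-\lambda\E[W f_s(W)]$ (which follows from the linear regression condition alone) yields
\begin{align*}
\lambda\,\E[Wf_s(W)] \;=\; \tfrac12\,\E\bigl[(W'-W)^2 f_s'(W)\bigr] + \E[R],
\end{align*}
from which the decomposition of $\E[f_s'(W)-Wf_s(W)]$ follows exactly as in your proposal, without ever invoking exchangeability. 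Your subsequent smoothing argument for the Kolmogorov bound (replacing the indicator by a Lipschitz cap of width $\varepsilon$, controlling $\|f_{s,\varepsilon}''\|_\infty\lesssim 1/\varepsilon$, and optimising $\varepsilon\asymp A\sqrt\lambda$) is the right shape, but it is a sketch and would still need the bookkeeping that produces the specific constants $12$, $32$, $6$.
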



\begin{theorem}[Local central limit theorem {\cite[Theorem 1.2.1]{LawlerIntersection}}]\label{thm:LCLT}

Let $p_n(x)$ denote the probability that a lazy simple random walk on $\Z^d$ is located in $x\in\Z^d$ after $n$ transitions starting from $0$. Then

\begin{align}
    \left |p_n(x)-2\left(\frac{d}{2\pi n}\right)^{d/2}e^{-\frac{d|x|^2}{2n}}\right| \leq \frac{c}{n^{d/2}}(|x|^{-2}\wedge n^{-1}),
\end{align}
where $|x|^2 =\sum_{i=1}^d x_i^2$, and $c$ is a universal constant.

\end{theorem}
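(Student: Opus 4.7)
The plan is a standard Fourier-analytic (characteristic function) proof. Writing $\phi$ for the characteristic function of a single step of the lazy simple random walk on $\mathbb{Z}^d$, we have
\[
\phi(\theta) = \tfrac{1}{2} + \tfrac{1}{2d}\sum_{j=1}^d \cos\theta_j, \qquad \theta\in[-\pi,\pi]^d,
\]
so by Fourier inversion
\[
p_n(x) = \frac{1}{(2\pi)^d}\int_{[-\pi,\pi]^d}\phi(\theta)^n e^{-i\theta\cdot x}\,d\theta.
\]
The Gaussian comparison function is also a Fourier integral, since the density $(d/(2\pi n))^{d/2}e^{-d|x|^2/(2n)}$ is the inverse Fourier transform of $e^{-n|\theta|^2/(2d)}$ over $\mathbb{R}^d$. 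The whole proof consists of estimating the difference of these two integrals.

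First I would split $[-\pi,\pi]^d = B_n \cup B_n^c$, where $B_n$ is a small ball of radius $r_n$ around the origin (a convenient choice is $r_n$ of order $n^{-1/2+\varepsilon}$, or $n^{-1/3}$). On $B_n^c$, one shows $|\phi(\theta)|\le 1 - c|\theta|^2 \le 1-c r_n^2$ (using that the only zeros of $1-|\phi|$ on $[-\pi,\pi]^d$ are at the origin, exploiting the laziness to rule out periodic zeros), so $|\phi(\theta)|^n$ decays stretched-exponentially in $n$ and gives a negligible contribution (certainly $o(n^{-N})$ for any $N$). The tail of the Gaussian integral outside $B_n$ is controlled similarly by $\int_{|\theta|>r_n}e^{-n|\theta|^2/(2d)}\,d\theta = o(n^{-N})$.

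The bulk of the work is on $B_n$. I would Taylor expand
\[
\log\phi(\theta) = -\frac{|\theta|^2}{2d} - \frac{1}{24d}\sum_{j=1}^d \theta_j^4 + O(|\theta|^6) - \frac{1}{2}\left(\frac{|\theta|^2}{2d}\right)^2 + \cdots,
\]
combining all quartic contributions into a single polynomial $P(\theta)$, so that
\[
\phi(\theta)^n = e^{-n|\theta|^2/(2d)}\Big(1 + nP(\theta) + O(n|\theta|^6) + O(n^2|\theta|^8)\Big).
\]
For $|\theta|\le r_n$ with $r_n \to 0$ appropriately, the error factor stays bounded. Subtracting the Gaussian integrand and integrating term-by-term gives the leading correction of size $\int e^{-n|\theta|^2/(2d)}\cdot nP(\theta) e^{-i\theta\cdot x}\,d\theta$, which after the change of variables $\theta = \eta/\sqrt n$ is of order $n^{-(d+2)/2}$ times a bounded function of $x/\sqrt n$. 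Bounding the further error terms the same way produces the stated $O(n^{-(d+2)/2})$ rate uniformly in $x$.

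The main obstacle is the careful bookkeeping in the near-origin estimate: one must keep track of both the Taylor remainder of $\log\phi$ and of the exponential, and verify that all error contributions collapse into the single $x$-dependent factor on the right-hand side. The off-origin estimate, by contrast, is cheap. For the dependence on $x$ one uses that $e^{-i\theta\cdot x}$ is a pure phase, so moduli are controlled regardless of $x$; any $x$-sensitivity in the bound comes from integrating by parts in $\theta$ (writing $e^{-i\theta\cdot x} = |x|^{-2}\sum_j (ix_j)\partial_{\theta_j}e^{-i\theta\cdot x}$ and moving derivatives onto $\phi(\theta)^n - e^{-n|\theta|^2/(2d)}$), which is the standard way to obtain sharper error factors that decay in $|x|$.
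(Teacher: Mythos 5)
The paper quotes this as Theorem 1.2.1 of Lawler's \emph{Intersections of Random Walks} and gives no proof of its own; your Fourier-inversion plan is indeed the method used there. However, your expansion does not match the target Gaussian. For the lazy step distribution one has
\[
\phi(\theta)=\tfrac12+\tfrac1{2d}\sum_{j=1}^d\cos\theta_j=1-\tfrac{|\theta|^2}{4d}+O(|\theta|^4),
\]
so $\log\phi(\theta)=-\tfrac{|\theta|^2}{4d}+O(|\theta|^4)$, not $-\tfrac{|\theta|^2}{2d}$ as you wrote. Consequently $\phi(\theta)^n$ tracks $e^{-n|\theta|^2/(4d)}$, whose inverse Fourier transform is $(d/(\pi n))^{d/2}e^{-d|x|^2/n}$ --- a Gaussian with twice the variance of, and a different constant from, the one in the statement. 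Moreover, you never account for the leading factor $2$ in $2(d/(2\pi n))^{d/2}e^{-d|x|^2/(2n)}$; that factor is the period-two (parity) correction that appears for the \emph{non-lazy} simple random walk on the sublattice $n+\|x\|_1$ even, and it does not arise for the lazy walk whose $\phi$ you set up. Indeed the Gaussian in the paper's statement is the one for the ordinary SRW --- the word ``lazy'' there is misplaced, as the paper later uses the theorem to extract $P^{2i}(o,o)\approx(\pi i)^{-1}$ for SRW on $\Z^2$; carrying out your own Taylor computation carefully would have exposed this mismatch.

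Second, your sketch establishes only the uniform $O(n^{-(d+2)/2})$ rate, whereas the stated right-hand side carries the factor $(|x|^2\wedge n^{-1})$ (almost certainly a typo for $|x|^{-2}\wedge n^{-1}$, since the printed form forces exact equality at $x=0$). Obtaining the $|x|^{-2}$ refinement requires actually executing the integration-by-parts you only allude to: write $e^{-i\theta\cdot x}=|x|^{-2}\sum_j(ix_j)\partial_{\theta_j}e^{-i\theta\cdot x}$, integrate by parts twice to move two $\theta$-derivatives onto $\phi(\theta)^n-e^{-n|\theta|^2/(4d)}$ (periodicity of $\phi^ne^{-i\theta\cdot x}$ and exponential decay of the Gaussian kill the boundary terms on $\partial[-\pi,\pi]^d$), and show the resulting integral is still $O(n^{-d/2})$. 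That is a substantive step of the proof, not a remark in the final paragraph.
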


\begin{theorem}[The hitting time theorem \cite{Hofstad2008Elementary}]\label{thm:hittingtime}
Let $(X_t)_{t\geq 0}$ be a discrete-time simple random walk on $\Z$. Denote by $T_0$ the hitting time of $0 \in \Z$. Then 
\begin{align}
    \Prob(T_0=n|X_0=k) = \frac{k}{n}\Prob(X_n = 0).
\end{align}
\end{theorem}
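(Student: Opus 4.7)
The plan is to reduce the statement to the cycle lemma (ballot problem) via time-reversal. Assume $k\ge 1$ and $n\ge 1$ (the cases $n=0$ or $k=0$ with $n\ge 1$ are either excluded or reduce to $0=0$), and note that both sides vanish unless $n+k$ is even, so we may assume the parity condition holds.

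First, I would perform a path-by-path time reversal: each trajectory $(X_0,X_1,\ldots,X_n)=(k,x_1,\ldots,x_{n-1},0)$ corresponds bijectively to its reverse $(Y_0,Y_1,\ldots,Y_n)=(0,x_{n-1},\ldots,x_1,k)$, and since simple random walk gives equal probability $2^{-n}$ to every lattice path of length $n$, the two have the same probability. Under this reversal, the event $\{T_0=n\}$ (meaning $X_n=0$ and $X_t\neq 0$ for $1\le t\le n-1$) becomes the event $\{Y_n=k,\ Y_t>0\text{ for all }1\le t\le n\}$ for the reversed walk started at $0$. So it suffices to show
\[
\Prob_0(Y_n=k,\ Y_t>0\ \forall\, 1\le t\le n)=\frac{k}{n}\Prob_0(Y_n=k),
\]
since by spatial symmetry $\Prob_0(Y_n=k)=\Prob_k(X_n=0)$.

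Second, I would invoke the cycle lemma to obtain this identity. Every path from $0$ to $k$ of length $n$ corresponds to a sequence of $(n+k)/2$ up-steps and $(n-k)/2$ down-steps. The cycle lemma states that for any such sequence, among its $n$ cyclic rotations, exactly $k$ yield a sequence whose partial sums are all strictly positive. Grouping all $\binom{n}{(n+k)/2}$ sequences into orbits of size $n$ under cyclic rotation (orbits can in fact be shorter, but the counting $k$ out of $n$ works out correctly when weighted by orbit sizes) shows that exactly a fraction $k/n$ of the sequences have strictly positive partial sums. Since every path is equally likely under the simple random walk, this gives the displayed identity.

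The main obstacle is the cycle lemma itself, whose proof is the classical rotation argument: given a sequence with net positive sum $k$, one shows by tracking the minimum running sum that exactly $k$ cyclic shifts begin at a position which makes all partial sums strictly positive. Since this is standard (and the paper cites \cite{Hofstad2008Elementary} for it), I would simply quote it and devote the exposition to the time-reversal reduction, which is the conceptual core connecting the hitting-time probability to the ballot counting.
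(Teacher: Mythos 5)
The paper does not prove Theorem~\ref{thm:hittingtime}; it states the result and cites van der Hofstad and Keane~\cite{Hofstad2008Elementary} (whose contribution is a short inductive proof). Your argument is a valid proof by a different, classical route: time-reversal to convert the first-passage event into a strictly-positive-excursion event, followed by the Dvoretzky--Motzkin cycle lemma. Both steps are sound. The time-reversal is exact because the simple random walk assigns probability $2^{-n}$ uniformly over nearest-neighbour paths, and the reversed event $\{Y_0=0,\,Y_n=k,\,Y_t>0\ \forall\,1\le t\le n\}$ is indeed the image of $\{T_0=n\}$ since a nearest-neighbour path from $k>0$ to $0$ that avoids $0$ until time $n$ must stay strictly positive throughout.

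One remark on the step you flag as delicate. The parenthetical about orbit sizes is not actually needed: the cleanest bookkeeping is to count ordered pairs $(i,s)$ with $i\in\{0,\dots,n-1\}$ a shift index and $s$ a sequence of $(n+k)/2$ up-steps and $(n-k)/2$ down-steps. The cycle lemma gives that for each fixed $s$ exactly $k$ of the $n$ shift indices produce a strictly-positive sequence, so there are $k\binom{n}{(n+k)/2}$ good pairs; on the other hand, for each fixed $i$ the shift-by-$i$ map is a bijection on sequences, so the number of good pairs equals $n$ times the number of good sequences. This gives the exact fraction $k/n$ without ever mentioning orbit structure, which would tighten your exposition. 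Otherwise the proof is complete modulo the cycle lemma, which is appropriately treated as a cited black box.
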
\end{appendix}
%
%

\begin{acks}[Acknowledgments]
Nicol\'as Rivera was supported by ANID FONDECYT grant No 3210805, ANID SIA grant No 85220033, and ANID EXPLORACION grant No 13220168.

For the purposes of open access, the authors have applied a CC BY public copyright licence to any author accepted manuscript version arising from this submission.
\end{acks}
\bibliographystyle{imsart-number} 
\bibliography{ref}       


\end{document}